\newcommand{\annihil}{\operatorname{ann}}
\newcommand{\ext}{\operatorname{Ext}}
\renewcommand{\hom}{\operatorname{Hom}}
\newcommand{\der}{\operatorname{Der}}
\newcommand{\adic}{\operatorname{\mathfrak a}}
\newcommand{\exa}{\operatorname{E}}
\newcommand{\rad}{\operatorname{Rad}}
\renewcommand{\Im}{\operatorname{Im}}
\newtheorem{theorem}{Theorem}
\newtheorem{lemma}[theorem]{Lemma}
\newtheorem{proposition}[theorem]{Proposition}
\newtheorem{corollary}[theorem]{Corollary}
\theoremstyle{remark}
\newtheorem{remark}[theorem]{Remark}
\author{Dmitry Trushin}
\title{Algebraization of a Cartier divisor}
\date{}
\begin{document}

\maketitle

\begin{abstract}
We extend to pairs classical results of R. Elkik on lifting of homomorphisms and algebraization. In particular, we establish algebraization of an affine rig-smooth formal variety with a rig-smooth closed subvariety. This solves affirmatively a problem raised by M. Temkin and has applications to desingularization theory.
\end{abstract}

\tableofcontents

\section*{Introduction}

\subsubsection*{Motivation}

In a fundamental work~\cite{ArtAppr}, Artin studied how various algebraic structures over the completion or henselization of a ring $A$ along an ideal $m$ can be approximated with a structure over $A$. Some his results were extended in a fundamental paper of Elkik~\cite{Elkik73}. In particular, Elkik studied approximation and algebraization of rig-smooth schemes and formal schemes defined over a complete or henselian ring. Note also that very recently several constructions of~\cite{Elkik73} were clarified and simplified by Gabber and Ramero in~\cite[Chapter~5]{GabRam}.

The results of~\cite{Elkik73} have various important applications in algebraic geometry and related fields. They are especially useful when one wants to reduce questions about general schemes to the case of varieties. One of such applications is resolution of singularities of quasi-excellent schemes, see~\cite{Temkin2008} and \cite{Temkin2012}. However, the reliance of the latter papers on~\cite{Elkik73} imposed some restrictions due to the fact that a basic datum involved in the strongest desingularization results (often referred to as embedded desingularization) is rather complicated, and its algebraization is not covered by~\cite{Elkik73}. This raised a natural quest for extending the results of~\cite{Elkik73} to more complicated algebraic structures. For example, in the end of~\cite[\S1.1]{Temkin2008} Temkin noted that probably Elkik's algebraization of rig-smooth formal schemes can be extended to pairs consisting of a rig-smooth formal scheme and a rig-snc divisor. In this paper, we explore a slightly different direction. We consider only a rig-smooth formal subscheme but do not restrict its codimension. Our results will be used by Temkin to establish strong embedded desingularization of quasi-excellent schemes of characteristic zero.

\subsubsection*{The method, results, and relation to~\cite{Elkik73}}

Our main goal is to prove Theorem~\ref{theorem:main}, which is a natural generalization of~\cite[Theorem~7]{Elkik73} to rig-smooth pairs. It should be noted that by loc.cit. we can algebraize the scheme and its subscheme independently, and it is also not difficult to obtain a morphism between the algebraizations. The main difficulty is to guarantee that the connecting homomorphism of rings is surjective (i.e. the morphism of schemes is a closed immersion). In order to prove this, we have to generalize almost all essential results of~\cite{Elkik73}. Here is the list of them:
\begin{itemize}
\item Theorem~\ref{theorem:common_alg_lifting} shows the lifting property with values in completions for a pair of rig-smooth algebras. It generalizes Theorem~1.
\item Theorem~\ref{theorem:com_alg_lift_h} shows that every homomorphism of a pair of rig-smooth algebras to a pair of completions can be approximated by a homomorphism to a pair of henselizations. It generalizes Theorem~2~bis.
\item Theorem~\ref{theorem:pair_lift} shows the lifting property with values in henselizations for a pair of rig-smooth algebras. It generalizes Theorem~2 and is just an equivalent form of Theorem~\ref{theorem:com_alg_lift_h}.
\item Theorem~\ref{theorem:duo_mod_algeb} shows that every surjective homomorphism of modules being projective over the complement of $V(\adic)$ is algebraizable. It generalizes Theorem~3.
\item Theorem~\ref{theorem:duo_mod_lift} shows the lifting property for a pair of modules. It generalizes Lemma on page~572.
\item Theorem~\ref{theorem:alg_div} is a corollary of Theorem~\ref{theorem:main}. It shows that every rig-smooth Cartier divisor on a rig-smooth affine formal scheme is algebraizable.
\end{itemize}
We also added Proposition~\ref{prop:app_hens}, whose analogue does not appear in~\cite{Elkik73} but can be found in~\cite[Prop.~3.3.1]{Temkin2012}.

It should be noted that Gabber and Ramero introduce a different Jacobian ideal than the one used in~\cite{Elkik73}. Set-theoretically, it defines the same set of non-smooth points on a scheme but it is more flexible from the scheme-theoretic point of view. In our paper, we use the Gabber-Ramero ideal because it allows us to avoid computation at all and speak on the language of diagrams.

In the text, we have several restrictions. One of them is the number of generators for the defining ideal $\adic$. From the one hand, this restriction is essential only in Proposition~\ref{prop_good_case}, because the original result from~\cite{Elkik73} also has this restriction and we use it in our proof. From the other hand, for the sake of simplification, we intentionally write all other results in the paper in the case of the principal ideal. Using the same induction trick as used in~\cite{Elkik73}, one can easily  extend the results to the general case.

In our statements, we pay attention to different numerical parameters of homomorphisms liftings. One of the most important is the number $h$ such that $\adic^h\subseteq \bar H_{B/A}$. This number is called a conductor in~\cite{Temkin2012}. However, we usually take sums of maximums of conductors in situations when one can use just the sums of them. A careful reading of the proofs may give slightly more general statements than we proved but this is not our main aim.

\subsubsection*{General plan of the proof of Theorem~\ref{theorem:main}}
In a sense, we directly generalize the proof of~\cite[Theorem~7]{Elkik73} with some technical simplifications. We start with a henselian pair $(A,\adic)$, where $\adic=(t)\subseteq A$ is a principal ideal. Then the problem is to algebraize a surjective homomorphism $B\to \bar B$ of formally finitely generated $\widehat{A}$-algebras being formally smooth over the complement of $V(\adic)$. We solve the problem in two steps. The first one is to show the result in a particular case, when we can compute everything explicitly. The second step is to reduce of the general case to this one. The reduction is done by algebraization of a pair of modules being projective over the complement of $V(\adic)$. Now, let us describe the two steps with more details.

\paragraph{Step 1} If we want to algebraize $B=\widehat{A}\{X\}/J$, where $J=(g_1,\ldots,g_k,e)$, we can find some $g_1^0,\ldots,g_k^0,e^0\in A[X]$ such that
$$
g_i \equiv g_i^0 \pmod{\adic^n}\quad \mbox{ and }\quad e\equiv e^0\pmod{\adic^n}.
$$
If $e$ is an idempotent modulo $g_i$ and $g_i$ form a basis of $(J/J^2)_t$ (this determines our special case), we automatically get that $B^0=A[X]/(g_1^0,\ldots,g_k^0,e^0)$ is smooth and, if $n$ was sufficiently large, we automatically derive from this that $B$ and $\widehat{B}^0$ are isomorphic. In order to find such $g_i^0$ and $e^0$, we write down the equation saying that $e$ is an idempotent modulo $g_i$ and solve this equation using lifting Theorem~2 of~\cite{Elkik73}.

If we want to algebraize a pair of algebras given by $J\subseteq \bar J\subseteq \widehat{A}\{X\}$ with $B=\widehat{A}\{X\}/J$ and $\bar B=\widehat{A}\{X\}/\bar J$, we should pose the condition above on both ideals $J$ and $\bar J$ and also add another restriction: we should have an isomorphism of the following form
$$
(J/J^2)\otimes_B \bar B_t = (\bar J/\bar J^2)_t\oplus D
$$
where $D$ is $\bar B_t$-free. Then we can also write down an explicit system of equations on the generators of $J$ and $\bar J$ and find its solution due to Theorem~2 of~\cite{Elkik73}.

\paragraph{Step 2} In order to reduce an arbitrary case to the particular one, we should replace the quotient homomorphism $B\to \bar B$ by homomorphisms of the form $S_{B}\{J/J^2\}\to S_{\bar B}\{\bar J/\bar J^2\}$ and $B\{X\}\to \bar B$. Using separated K\"ahler differentials, one can show that the reduction is always possible. In order to come back to the initial algebras, it is enough to algebraize a pair of modules $P\to \bar P$, where $P_t$ and $\bar P_t$ are projective $B_t$ and $\bar B_t$-modules, respectively (one of the steps uses $P=J/J^2$ and $\bar P = \bar J/\bar J^2$). The case of modules is easer because we can replace, say $P$, by the beginning of its free resolution $B^m\stackrel{L}{\longrightarrow} B^n\to P$. Then we manipulate with finite set of matrices like $L$ and similar ones.

\subsubsection*{Structure of the paper}

In the first section, we fix some terminology and notation. In Section~2, we combine ideas of~\cite{GabRam} with~\cite{Elkik73} and provide scheme-theoretic basis for our needs. Section~3 is devoted to lifting theorems for modules and algebras. This is a technical core of the paper. Approximation of pairs of henselizations and pairs of complete algebras is considered in Section~4. In the final section, we prove our main result that is Theorem~\ref{theorem:main} and its corollary -- Theorem~\ref{theorem:alg_div}.

\subsubsection*{Acknowledgment}

I am grateful to Michael Temkin for suggesting the problem and support during writing the paper. Some parts of the text where significantly reduced due to his suggestions. This research was supported by Marie Curie International Reintegration Grant 268182 within the 7th European Community Framework Programme.

\section{Conventions}

Through out the text we assume that our rings are associative,
commutative and with an identity element. We also suppose that all
our rings are Noetherian. If we are given a ring $A$ with an ideal
$\adic$ and an \'etale homomorphism $A\to B$. We will say that $B$
is strictly \'etale if $A/\adic = B/\adic B$. The pair $(A, \adic)$
is said to be henselian if, for every strictly \'etale homomorphism
$\nu\colon A\to B$, there is a homomorphism $\mu \colon B\to A$ such
that $\mu\circ \nu = Id_A$.

If $A$ is an arbitrary ring and $\adic\subseteq A$ is an arbitrary
ideal, we define the following ring
$$
A^h = \varinjlim\{B\mid A\to B \quad \mbox{ is strictly \'etale}\}
$$
The ring $A^h$ is called the henselization of $A$ with respect to
$\adic$. The completion of a ring $A$ with $\adic$-adic topology
will be denoted by $\widehat{A}$.

For any finite set $x=\{x_1,\ldots,x_n\}$, the rings $A[x]$,
$A[x]^h$, and $A\{x\}$ denote the ring of polynomials, its
henselization with respect to $\adic[x]$, and the completion in
$\adic[x]$-adic topology, respectively. Every derivation
$\partial_i=\partial/\partial x_i$ of the ring $A[x]$ uniquely
extends to derivations in $A[x]^h$ and $A\{x\}$ and its extensions
will be denoted by the same name $\partial_i$. If we are given
elements $f_1,\ldots,f_n\in R$, where $R$ is either $A[x]$,
$A[x]^h$, or $A\{x\}$, then $\triangle^m(f_1,\ldots,f_n)$ will be
the ideal generated by all $m$-minors of the matrix $(\partial_j
f_i)$.

For any ring $A$ the category of $A$-modules will be denoted by
$A{-}\mathrm{mod}$ and the category of finite $A$-modules by
$A{-}\mathrm{mod}^0$.

Let again $A$ be a ring and $\adic\subseteq A$ is an arbitrary
ideal. We supply every $A$-module with $\adic$-adic topology. In
this case, for every $A$-module $M$, every derivation $d\colon A\to
M$ is continuous. Now, assume that $A$ is $\adic$-adically complete.
We will say that an $A$-algebra $B$ is formally finitely generated
if there is a dense finitely generated $A$-algebra. In other words,
$B$ is a quotient of $A\{x\}$ for some finite set $x$. Let us note,
that every formally finitely generated $A$-algebra is complete and
separated by definition. Moreover, every finitely generated module
over $B$ is complete and separated.

Suppose that a formally finitely generated algebra $B$ is presented
as follows
$$
B = A\{x_1,\ldots,x_n\}/J,\quad \mbox{ where }\quad J = (f_1,\ldots,
f_q)
$$
and for some natural $p$, we are given $(\alpha) =
(\alpha_1,\ldots,\alpha_p)$, where
$$
1\leqslant \alpha_1<\ldots<\alpha_p\leqslant q
$$
we define the following ideals of the ring $A\{x_1,\ldots,x_n\}$:
the ideal $\Delta_{(\alpha)}$ is the ideal generated by the
$q$-minors of the matrix $(\partial f_{\alpha_i}/\partial
x_j)_{\alpha_i\in (\alpha),j\in [1,n]}$, the ideal $J_{\alpha}$ is
the ideal generated by $f_{\alpha_i}$ with $\alpha_i\in \alpha$.
Then we define $H_J\subseteq B$ as the image of the ideal
$\sum_{\alpha} \Delta_{(\alpha)} (J_\alpha : J)$.

\section{Scheme-Theoretic support}

\subsection{The support of singularities}

\subsubsection{Smoothness}

Let $A$ be a ring, $B$ is an $A$-algebra, and $N$ is a $B$-module.
Recall that an extension $E=(D,\varepsilon, i)$ of $B$ by $N$ is the
following exact sequence
$$
E:\; 0\to N\stackrel{i}{\longrightarrow} D
\stackrel{\varepsilon}{\longrightarrow} B \to 0
$$
where $D$ is an $A$-algebra, $\varepsilon$ is a homomorphism of
$A$-algebras, $i(N)=\ker \varepsilon$ is a square zero ideal such
that the action of $B$ on $N= N/N^2$ coincides with the initial
action on $N$. The two such extensions $E=(D,\varepsilon,i)$ and
$E'=(D',\varepsilon',i')$ are said to be isomorphic if there is an
$A$-isomorphism $\psi\colon D\to D'$ inducing the identity maps on
$N$ and $D$. Morphisms between two extensions are defined in the
obvious way.

Recall that, for any extension $E=(D,\varepsilon, i)$, any
$A$-algebras homomorphism $\phi\colon B'\to B$, and any $B$-module
homomorphism $\xi\colon N\to M$, there are unique extensions $E\phi$
and $\xi E$ such that the following diagrams are commutative
$$
\xymatrix{
    {E\phi:}&{0}\ar[r]&{N}\ar[r]\ar@{=}[d]&{D'}\ar[r]\ar[d]&{B'}\ar[r]\ar[d]&{0}\\
    {E:}&{0}\ar[r]&{N}\ar[r]^{i}&{D}\ar[r]^{\varepsilon}&{B}\ar[r]&{0}\\
}
$$
and
$$
\xymatrix{
    {E:}&{0}\ar[r]&{N}\ar[r]\ar[d]^{\xi}&{D}\ar[r]\ar[d]&{B}\ar[r]\ar@{=}[d]&{0}\\
    {\xi E:}&{0}\ar[r]&{M}\ar[r]&{D''}\ar[r]&{B}\ar[r]&{0}\\
}
$$

The set of all extensions of $B$ by $N$ will be denoted by
$\exa_A(B,N)$. It is an abelian group with respect to the Baer sum
and the multiplication by the elements of $B$ gives a structure of a
$B$-module. Now, we can define the following ideal
$$
H_{B/A} = \bigcap_{N\in
B{-}\mathrm{mod}}\annihil_B\left(\exa_A(B,N)\right)
$$

Note that, if we say that $B$ is smooth $A$-algebra, we do not
necessarily assume that $B$ is finitely generated. We do not need
this finiteness condition to describe the properties of the ideal
$H_{B/A}$. However, in all applications this condition will be
assumed and explicitly stated.

In~\cite[Section~0.2]{Elkik73}, another ideal is used instead of
$H_{B/A}$. The ideal $H_{B/A}$ was introduced in~\cite[Chapter~5,
Section~5.4]{GabRam} in a slightly different way. Due to Gabber's
ideal, we can significantly simplify computational part of methods
used in~\cite{Elkik73}.

\begin{lemma}\label{lemma:E_surj}
Let $A$ be a ring, $R$ be a smooth $A$-algebra, $J$ an ideal of $R$,
and $B$ is an $A$-algebra such that the following sequence is exact
$$
0\to J\to R\to B\to 0
$$
Then the natural map
$$
\oplus_\xi \exa_A(B,\xi)\colon \bigoplus_{\xi\in \hom_B(J/J^2,N)}
\exa_A(B,J/J^2)\to \exa_A(B,N)
$$
is surjective.
\end{lemma}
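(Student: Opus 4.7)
The plan is to identify one canonical element of $\exa_A(B, J/J^2)$ from which every extension of $B$ by $N$ is obtained by pushout, so that the asserted map is already surjective on a single $\xi$-indexed summand. The obvious candidate is the tautological extension
$$
E_0 : 0 \to J/J^2 \to R/J^2 \to B \to 0,
$$
which is a legitimate element of $\exa_A(B, J/J^2)$ because $(J/J^2)^2 = 0$ in $R/J^2$ and the induced $B$-module structure on $J/J^2$ is the natural one.

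Given an arbitrary extension $E: 0\to N \to D \to B \to 0$, I would use the smoothness of $R$ over $A$ -- i.e.\ the vanishing $\exa_A(R, N) = 0$ -- to produce an $A$-algebra lift $\phi\colon R\to D$ of the surjection $R\to B$. Concretely, pulling $E$ back along $R\to B$ yields an extension of $R$ by $N$; smoothness forces this pullback to split, and any splitting gives the desired $\phi$. Because $\phi(J) \subseteq N$ and $N^2 = 0$, one has $\phi(J^2) = 0$, so $\phi$ factors through $\bar\phi\colon R/J^2 \to D$, and its restriction defines a $B$-linear map $\xi\colon J/J^2 \to N$. The resulting diagram
$$
\xymatrix{
    {0}\ar[r] & {J/J^2}\ar[r]\ar[d]^{\xi} & {R/J^2}\ar[r]\ar[d]^{\bar\phi} & {B}\ar[r]\ar@{=}[d] & {0}\\
    {0}\ar[r] & {N}\ar[r] & {D}\ar[r] & {B}\ar[r] & {0}\\
}
$$
commutes by construction.

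From the universal property characterizing $\xi E_0$ as the pushout of $E_0$ along $\xi$ (dually, the unique extension equipped with a morphism from $E_0$ with left arrow $\xi$ and right arrow $\mathrm{id}_B$), the above diagram produces an isomorphism $\xi E_0 \cong E$. Hence the class $(\xi, [E_0])$ on the left of the map in question is sent to $[E]$, which yields the surjectivity claim. The only nontrivial ingredient is the existence of the lift $\phi$, i.e.\ the infinitesimal lifting property built into the definition of smoothness via $\exa_A(R, -) = 0$; once that is in hand, everything else is a formal manipulation with pushouts of extensions.
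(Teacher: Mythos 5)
Your proposal is correct and follows essentially the same route as the paper: lift the surjection $R\to B$ to $\phi\colon R\to D$ using smoothness of $R$, observe that $\phi(J^2)=0$ since $N^2=0$, and recognize $E$ as the pushout of the tautological extension $0\to J/J^2\to R/J^2\to B\to 0$ along the induced map $\xi\colon J/J^2\to N$. The paper phrases the last step as $E=\bar\xi T$ rather than invoking the pushout universal property explicitly, but the argument is the same.
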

\begin{proof}
Consider the following commutative diagram
$$
\xymatrix{
    {}&{0}\ar[r]&{J}\ar[r]\ar@{-->}[d]^{\xi}&{R}\ar[r]\ar@{-->}[d]^{\phi}&{B}\ar@{=}[d]\ar[r]&{0}\\
    {E:}&{0}\ar[r]&{N}\ar[r]&{D}\ar[r]&{B}\ar[r]&{0}\\
}
$$
where the exact sequence $E$ is an element of $\exa_A(B,N)$ and we
are going to produce the dotted lines. Since $R$ is $A$-smooth,
there is an $A$-lifting $\phi\colon R\to D$ of the identity map
$Id_B$. Then $\xi$ is the restriction of $\phi$ on $J$. Since
$N^2=0$, $\phi(J^2)=0$. Therefore, we have the following commutative
diagram
$$
\xymatrix{
    {T:}&{0}\ar[r]&{J/J^2}\ar[r]\ar@{-->}[d]^{\bar\xi}&{R/J^2}\ar[r]\ar@{-->}[d]^{\bar\phi}&{B}\ar@{=}[d]\ar[r]&{0}\\
    {E=\bar \xi T:}&{0}\ar[r]&{N}\ar[r]&{D}\ar[r]&{B}\ar[r]&{0}\\
}
$$
where $\bar \phi$ and $\bar \xi$ are the maps induced by $\phi$ and
$\xi$, respectively. Hence, the element $E$ belongs to the image of
$\exa_A(B,\bar \xi)$.
\end{proof}

\begin{corollary}\label{cor:smooth_cover}
Let $A$ be a ring, $R$ a smooth $A$-algebra, $J$ an ideal of $R$,
and $B$ is an $A$-algebra such that the following sequence is exact
$$
0\to J\to R\to B\to 0
$$
Then
$$
H_{B/A} = \annihil_B\left(\exa_A(B,J/J^2)\right).
$$
\end{corollary}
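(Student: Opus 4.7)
The plan is to prove the equality by two inclusions, the first being immediate from the definition of $H_{B/A}$.

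For the inclusion $H_{B/A} \subseteq \annihil_B(\exa_A(B, J/J^2))$, note that $J/J^2$ is a particular $B$-module, so $\annihil_B(\exa_A(B, J/J^2))$ is one term in the intersection that defines $H_{B/A}$. Hence $H_{B/A}$ is contained in it.

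For the reverse inclusion, take any $B$-module $N$ and any $b \in \annihil_B(\exa_A(B, J/J^2))$; I want to show $b$ annihilates every extension $E \in \exa_A(B,N)$. By Lemma~\ref{lemma:E_surj}, every such $E$ lies in the image of $\oplus_\xi \exa_A(B, \xi)$, so it suffices to show that for any $B$-module map $\xi \colon J/J^2 \to N$ and any $T \in \exa_A(B, J/J^2)$ we have $b \cdot (\xi T) = 0$. The key point is functoriality of the pushout construction together with the fact that the $B$-module structure on $\exa_A(B,-)$ is given by multiplication on the kernel: multiplication by $b$ on $\exa_A(B, N)$ coincides with pushforward along $b \cdot \mathrm{id}_N \colon N \to N$. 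Since $\xi$ is $B$-linear, $(b \cdot \mathrm{id}_N) \circ \xi = \xi \circ (b \cdot \mathrm{id}_{J/J^2})$, and the naturality of the extension operation $\xi \mapsto \xi T$ gives
$$
b \cdot (\xi T) = \xi (b \cdot T) = \xi (0) = 0,
$$
because $b \cdot T = 0$ by assumption on $b$. Thus $b \in \annihil_B(\exa_A(B, N))$ for every $N$, giving $b \in H_{B/A}$.

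The only step that needs a moment of care is checking the compatibility of the $B$-action with pushforward, but this is a formal consequence of the construction of $\xi T$. No difficulties are expected beyond unwinding the definitions; Lemma~\ref{lemma:E_surj} carries all the geometric content.
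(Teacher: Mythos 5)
Your proof is correct and is exactly the deduction the paper intends: the corollary is stated there without a separate argument, as an immediate consequence of Lemma~\ref{lemma:E_surj} combined with the fact that the $B$-action on $\exa_A(B,N)$ is pushforward along multiplication on the kernel, so each $\exa_A(B,\xi)$ is $B$-linear and sends classes killed by $b$ to classes killed by $b$. No gap; your handling of the ``finite sum of pushforwards'' point is the only detail that needed care, and you address it correctly.
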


\begin{lemma}\label{lemma:HB_computation}
Let $A$ be a ring, $R$ a smooth $A$-algebra, $J$ an ideal of $R$,
and $B$ is an $A$-algebra such that the following sequence is exact
$$
0\to J\to R\to B\to 0
$$
Then we have
$$
H_{B/A} = \annihil_B( \hom_{B}(J/J^2,J/J^2)/\der_{A}(R,J/J^2))
$$
\end{lemma}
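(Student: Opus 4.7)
The plan is to deduce the claim from Corollary~\ref{cor:smooth_cover} via a standard fundamental exact sequence for extensions. Namely, I will construct a natural exact sequence of $B$-modules
$$
\der_A(R,N) \xrightarrow{\delta} \hom_B(J/J^2,N) \xrightarrow{\theta} \exa_A(B,N) \to 0
$$
for every $B$-module $N$. Specializing to $N = J/J^2$ identifies $\exa_A(B, J/J^2)$ with the cokernel of $\delta$, and then Corollary~\ref{cor:smooth_cover} finishes the proof, since the annihilator of a quotient $\hom_B(J/J^2,J/J^2)/\Im\delta$ is by definition what the statement calls $\annihil_B(\hom_B(J/J^2,J/J^2)/\der_A(R,J/J^2))$.

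The map $\delta$ will send an $A$-derivation $d$ to its restriction to $J$: since $N$ is a $B$-module one has $J\cdot N = 0$, so $d$ vanishes on $J^2$ (using $d(j_1 j_2) = j_1 d(j_2) + j_2 d(j_1) \in J\cdot N$), and the Leibniz rule $d(rj) = \bar r\cdot d(j)$ for $j\in J$ shows that $d|_J$ is $B$-linear. The map $\theta$ is the pushout construction $\xi \mapsto \xi T$, where $T$ denotes the tautological extension $0 \to J/J^2 \to R/J^2 \to B \to 0$. Its surjectivity is exactly the content of Lemma~\ref{lemma:E_surj}, and this is the only place the smoothness of $R/A$ is used.

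The core step is then to verify $\ker\theta = \Im\delta$. Let $D''$ denote the pushout $A$-algebra appearing in the extension $\xi T$, and let $\iota\colon R/J^2 \to D''$ be the canonical map; by construction of the pushout, $\iota|_{J/J^2}$ coincides with $\xi$ composed with the inclusion $N \hookrightarrow D''$. The extension $\xi T$ splits iff $D'' \to B$ admits an $A$-algebra section, equivalently iff there exists an $A$-algebra map $\tilde s\colon R \to D''$ lifting $R \to B$ and vanishing on $J$. Writing $\tilde s = \iota + d$ with $d\colon R \to N$, the requirement that $\tilde s$ be a ring homomorphism forces $d$ to be an $A$-derivation (because $N^2 = 0$), while $\tilde s(J) = 0$ translates to $\xi + d|_J = 0$. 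Hence $\xi T$ is trivial iff $\xi = -d|_J$ for some $d \in \der_A(R, N)$, which is exactly $\Im\delta$.

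No step presents a real obstacle. The only mild subtlety is bookkeeping the pushout carefully enough that the identification $\iota|_{J/J^2} = \xi$ (after inclusion $N \hookrightarrow D''$) is unambiguous, so that the sign appearing in the condition $\xi = -d|_J$ is accounted for; since $d \mapsto -d$ preserves $\der_A(R,N)$, the image of $\delta$ is unaffected. Everything else is routine manipulation of square-zero extensions, after which the final statement follows by taking $N = J/J^2$ and applying Corollary~\ref{cor:smooth_cover}.
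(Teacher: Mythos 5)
Your proof is correct and takes essentially the same route as the paper: both rest on Corollary~\ref{cor:smooth_cover} together with the standard fact that a pushout $\xi T$ of the tautological extension $0\to J/J^2\to R/J^2\to B\to 0$ splits exactly when $\xi$ arises by restricting an $A$-derivation $R\to J/J^2$. The only difference is presentational — you package this as the exact sequence $\der_A(R,N)\to\hom_B(J/J^2,N)\to\exa_A(B,N)\to 0$ and then specialize $N=J/J^2$, whereas the paper argues element-wise with $\xi = x\cdot\mathrm{id}$.
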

\begin{proof}
By Corollary~\ref{cor:smooth_cover}, we have
$$
H_{B/A} = \annihil_B\left(\exa_A(B,J/J^2)\right).
$$
Let $x\in B$ be an arbitrary element, we can write the following
commutative diagram
$$
\xymatrix{
    {T:}&{0}\ar[r]&{J/J^2}\ar[r]\ar[d]^{x}&{R/J^2}\ar[r]\ar[d]&{B}\ar[r]\ar@{=}[d]&{0}\\
    {xT:}&{0}\ar[r]&{J/J^2}\ar[r]&{D}\ar[r]&{B}\ar[r]&{0}\\
}
$$
The element $x$ belongs $H_{B/A}$ if an only if $xT=0$. The latter
means $xT$ splits. In other words, the multiplication by $x$ extends
to an $A$-derivation
$$
d\colon R/J^2\to J/J^2.
$$
Since $J/J^2$ is a module over $R/J=B$,
$$
\der_A(R/J^2,J/J^2)=\der_A(R,J/J^2).
$$

\end{proof}

The ideal $H_{B/A}$ describes $A$-smoothness in the following
sense.
\begin{lemma}\label{lemma:HB_smooth}
Let $A$ be a ring, and $B$ a finitely generated $A$-algebra. Then,
for every prime ideal $\mathfrak p\subseteq B$, $B_\mathfrak p$ is
$A$-smooth if and only if $H_{B/A}\nsubseteq \mathfrak p$.
\end{lemma}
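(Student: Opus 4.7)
The plan is to reduce to the computation of $H_{B/A}$ provided by Lemma~\ref{lemma:HB_computation} and then analyze the resulting module locally at $\mathfrak p$. Since $B$ is finitely generated over $A$, I would first write $B=R/J$ with $R=A[x_1,\ldots,x_n]$ a polynomial ring and $J=(f_1,\ldots,f_m)$ finitely generated (as $R$ is Noetherian). Applying Lemma~\ref{lemma:HB_computation} gives
$$
H_{B/A}=\annihil_B(M),\qquad M:=\hom_B(J/J^2,J/J^2)/\der_A(R,J/J^2).
$$
The module $M$ is a finitely generated $B$-module, because $\hom_B(J/J^2,J/J^2)$ embeds in $(J/J^2)^m$ and $\der_A(R,J/J^2)=\hom_R(\Omega_{R/A},J/J^2)\cong(J/J^2)^n$, $\Omega_{R/A}$ being free of rank $n$.

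As $M$ is finitely generated over the Noetherian ring $B$, the condition $H_{B/A}\nsubseteq\mathfrak p$ is equivalent to $M_{\mathfrak p}=0$. So the task reduces to the local statement: $M_{\mathfrak p}=0$ if and only if $B_{\mathfrak p}$ is $A$-smooth. The bridge between the two is the conormal map
$$
\alpha\colon J/J^2\longrightarrow \Omega_{R/A}\otimes_R B,\qquad [f]\mapsto df\otimes 1,
$$
under which the map $\der_A(R,J/J^2)\to\hom_B(J/J^2,J/J^2)$ of Lemma~\ref{lemma:HB_computation} is precisely precomposition with $\alpha$ (once one identifies $\der_A(R,N)$ with $\hom_B(\Omega_{R/A}\otimes_R B,N)$).

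For the implication $M_{\mathfrak p}=0\Rightarrow B_{\mathfrak p}$ smooth, the surjectivity of the comparison map at $\mathfrak p$ lifts $\mathrm{id}_{(J/J^2)_{\mathfrak p}}$ to a derivation $d_{\mathfrak p}$, which is a $B_{\mathfrak p}$-linear retraction of $\alpha_{\mathfrak p}$. Thus $(J/J^2)_{\mathfrak p}$ is a direct summand of the free module $(\Omega_{R/A}\otimes_R B)_{\mathfrak p}$, hence projective, and the conormal sequence is split short exact at $\mathfrak p$; this is exactly the criterion for $B_{\mathfrak p}$ to be $A$-smooth (EGA~IV, \S17). For the converse, a splitting of the conormal sequence at $\mathfrak p$ furnishes a retraction $\pi$, so that every $\phi\in\hom_{B_{\mathfrak p}}((J/J^2)_{\mathfrak p},(J/J^2)_{\mathfrak p})$ is the restriction of the derivation $\phi\circ\pi$; the comparison map is therefore surjective, forcing $M_{\mathfrak p}=0$.

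The main obstacle is essentially bookkeeping rather than conceptual: once the conormal map $\alpha$ is placed at the center, both directions reduce to linear algebra with projective modules, together with the standard smoothness-via-split-conormal-sequence criterion, which is the only external input I would need to quote.
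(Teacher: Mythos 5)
Your argument is correct, but it is worth noting that the paper does not prove this lemma at all: its ``proof'' is a citation of Lemma~5.4.2~(ii) of Gabber--Ramero. What you do instead is make the statement self-contained relative to the paper's own toolkit: you take the presentation $B=A[x_1,\ldots,x_n]/J$, invoke Lemma~\ref{lemma:HB_computation} to write $H_{B/A}=\annihil_B(M)$ with $M=\hom_B(J/J^2,J/J^2)/\der_A(R,J/J^2)$, observe $M$ is finite over the Noetherian ring $B$ so that $H_{B/A}\nsubseteq\mathfrak p$ is the same as $M_{\mathfrak p}=0$, and then translate $M_{\mathfrak p}=0$ into the existence of a retraction of the conormal map $\alpha_{\mathfrak p}$, which by the classical criterion (formal smoothness of a quotient of a formally smooth algebra is equivalent to split exactness of the conormal sequence) is exactly smoothness of $B_{\mathfrak p}$ over $A$. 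This is essentially the same mechanism the paper itself uses in the formal variant (compare the proof of Lemma~\ref{lemma:fsmooth_Elkik}), so your route is stylistically consistent with the paper; what it buys is independence from the external reference, at the cost of having to quote the conormal-splitting criterion from EGA. Two small points you use implicitly and should state: since $J/J^2$ and $\Omega_{R/A}\otimes_R B$ are finitely presented, both $\hom$ and the image of the comparison map commute with localization, so $M_{\mathfrak p}$ really is the cokernel of precomposition with $\alpha_{\mathfrak p}$; and the smoothness criterion must be applied to the presentation $B_{\mathfrak p}=R_{\mathfrak q}/JR_{\mathfrak q}$ (with $\mathfrak q$ the preimage of $\mathfrak p$, $R_{\mathfrak q}$ formally smooth over $A$), whose conormal sequence is the localization at $\mathfrak p$ of the global one. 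With these remarks added, your proof is complete.
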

\begin{proof}
See Lemma~5.4.2~(ii) of~\cite[Chapter~5, Section~5.4]{GabRam}.
\end{proof}

\subsubsection{Separated differentials}

Let $A$ be a ring and $\adic\subseteq A$ is an arbitrary ideal such
that $A$ is $\adic$-adically complete. Let $B$ is a formally
finitely generated $A$-algebra. We denote by $\Omega^s_{B/A}$ the
Hausdorff quotient of $\Omega_{B/A}$. Then $\Omega^s_{B/A}$ becomes
a finite $B$-module. In particular,
$\Omega^s_{B/A}=\widehat{\Omega}_{B/A}$. If $B=A\{x\}$ is a ring of
convergent series, the module $\Omega^s_{A\{x\}/A}$ is a free module
with generators $dx$. In our particular case, (20.7.17) and
(20.7.20) of~\cite{EGA1964_20} gives the following results.

\begin{proposition}[The first fundamental sequence]\label{prop:first_fund_s}
Let $A$ be a topological ring,
$B\to C$ is a homomorphism of formally finitely generated
$A$-algebras, then
\begin{enumerate}
\item the following sequence is exact
$$
\Omega^s_{B/A}\otimes_B C \to \Omega^s_{C/A}\to \Omega^s_{C/B}\to 0
$$
\item the sequence is split exact if and only if, for every finite
$B$-module $N$, any derivation from $B$ to $N$ over $A$ extends to a
derivation from $C$ to $N$.
\end{enumerate}
\end{proposition}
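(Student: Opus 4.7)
The plan is to reduce to the classical first fundamental sequence for K\"ahler differentials and then invoke the universal property identifying $\Omega^s_{-/A}$ with the representing object of continuous $A$-derivations into finite modules. The candidate maps come from the standard morphism $\Omega_{B/A}\otimes_B C\to \Omega_{C/A}$ and the projection $\Omega_{C/A}\to \Omega_{C/B}$; both descend to the Hausdorff quotients, and since $\Omega^s_{B/A}\otimes_B C$ is finitely generated over the Noetherian $\adic C$-adically complete ring $C$, it is already Hausdorff, so no further quotient is required on the left term.

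For part~(i) I would prove exactness by dualising: apply $\hom_C(-,N)$ for an arbitrary finite $C$-module $N$. Under the formal finite generation hypothesis, every $A$-derivation from $B$ or $C$ into a finite module is automatically continuous, so
$$
\hom_C(\Omega^s_{C/A},N)=\der_A(C,N),\qquad \hom_C(\Omega^s_{C/B},N)=\der_B(C,N),
$$
and the tensor-Hom adjunction gives $\hom_C(\Omega^s_{B/A}\otimes_B C,N)=\der_A(B,N)$. The resulting dual complex
$$
0\to \der_B(C,N)\to \der_A(C,N)\to \der_A(B,N)
$$
is left exact for elementary reasons: a derivation $C\to N$ is $B$-linear if and only if its restriction to $B$ vanishes. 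Yoneda then upgrades this to exactness of the original three-term sequence of finite $C$-modules at $\Omega^s_{C/A}$ together with surjectivity onto $\Omega^s_{C/B}$.

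For part~(ii), a $C$-linear retraction $\Omega^s_{C/A}\to \Omega^s_{B/A}\otimes_B C$ of the first map corresponds, via $\hom_C(-,N)$, to a surjectivity $\der_A(C,N)\twoheadrightarrow \der_A(B,N)$ for every finite $C$-module $N$, which is precisely the extension property stated (where $N$ is regarded as a $B$-module along $B\to C$). Conversely, specialising the extension property to $N=\Omega^s_{B/A}\otimes_B C$ and lifting the universal derivation yields the desired retraction. The chief technical subtlety, and essentially the only nontrivial point, is to secure the universal identifications used above; this rests on the hypothesis that $B$ and $C$ are formally finitely generated over the complete base $A$, ensuring that all separated differentials in sight are finite over the respective Noetherian rings and that the $\adic$-adic topology on them is automatically Hausdorff. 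Once this is in place, the proof is a direct transcription of the classical EGA argument (20.7.17, 20.7.20) to the separated setting.
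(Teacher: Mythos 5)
Your proof is correct and is essentially the argument behind the paper's own treatment, which offers no proof at all but simply cites (20.7.17) and (20.7.20) of~\cite{EGA1964_20}: dualizing into finite test modules is legitimate here because every term of the sequence is a finite module over the Noetherian complete ring $C$ (so the relevant cokernels are themselves admissible test modules), and separatedness of finite modules makes $\hom_C(\Omega^s_{C/A},N)=\der_A(C,N)$, $\hom_C(\Omega^s_{C/B},N)=\der_B(C,N)$, $\hom_C(\Omega^s_{B/A}\otimes_BC,N)=\der_A(B,N)$, exactly as you say. The one point worth recording is that in part~(ii) you tacitly (and correctly) read ``finite $B$-module $N$'' as a finite $C$-module viewed over $B$ along $B\to C$; this reading is forced by the converse direction, where the test module $\Omega^s_{B/A}\otimes_B C$ is finite over $C$ but not in general over $B$, and it matches the statement in loc.\ cit.\ rather than the literal wording of the proposition.
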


\begin{proposition}[The second fundamental sequence]\label{prop:sec_fund_s}
Let $A$ be a topological ring,
$B\to C$ is a topologically surjective homomorphism of topological
$A$-algebras with the kernel $J$. Then
\begin{enumerate}
\item the following sequence is exact
$$
J/J^2\to \Omega^s_{B/A}\otimes_B C \to \Omega^s_{C/A}\to 0
$$
\item the sequence is split exact if and only if the following
sequence splits
$$
0\to J/J^2\to B/J^2\to C\to 0
$$
\end{enumerate}
\end{proposition}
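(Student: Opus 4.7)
The plan is to deduce both parts from the classical (non-topological) second fundamental sequence of Kähler differentials together with the universal property of the Hausdorff quotient $\Omega^s_{B/A}$.

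For part (1), I would start from the familiar exact sequence
$$
J/J^2 \stackrel{\delta}{\longrightarrow} \Omega_{B/A}\otimes_B C \stackrel{p}{\longrightarrow} \Omega_{C/A}\to 0,
$$
which follows from the Leibniz rule as in any commutative algebra text. Passing to Hausdorff quotients on the middle and right terms produces the candidate sequence: since $\Omega_{B/A}\twoheadrightarrow \Omega^s_{B/A}$ is surjective, tensoring with $C$ keeps surjectivity onto $\Omega^s_{B/A}\otimes_B C$, and similarly for the right term. A diagram chase yields surjectivity at $\Omega^s_{C/A}$ and shows that the image of $J/J^2$ lies in the kernel of the middle map. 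The delicate point is the reverse inclusion: an element of $\Omega^s_{B/A}\otimes_B C$ whose image in $\Omega^s_{C/A}$ vanishes must be witnessed as coming from $J/J^2$, rather than merely from something lying in the closure of zero of $\Omega_{C/A}$. Controlling this closure compatibly on both sides is exactly the content of (20.7.17) of~\cite{EGA1964_20}, which I would invoke rather than redo the topological bookkeeping by hand.

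For part (2), I would use the universal property of $\Omega^s_{B/A}\otimes_B C$: for a separated finite $C$-module $M$, continuous $A$-derivations $B\to M$ are in bijection with $C$-linear maps $\Omega^s_{B/A}\otimes_B C\to M$. A splitting of the sequence in (1) is a $C$-linear retraction $r\colon\Omega^s_{B/A}\otimes_B C\to J/J^2$ with $r\circ \delta=\mathrm{id}$. Under the universal property this corresponds to a continuous $A$-derivation $d\colon B\to J/J^2$ whose restriction to $J$ is the canonical surjection $\pi\colon J\to J/J^2$. Given such $d$, the map
$$
\sigma\colon B/J^2\to (J/J^2)\oplus C,\qquad \bar x\mapsto (d(x),\, x \bmod J),
$$
is a well-defined $A$-algebra homomorphism: the Leibniz rule holds because the $B$-action on $J/J^2$ factors through $C$ (as $J\cdot J/J^2=0$). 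This $\sigma$ exhibits $B/J^2$ as the trivial square-zero extension $(J/J^2)\oplus C$, which is precisely a splitting of $0\to J/J^2\to B/J^2\to C\to 0$.

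The converse is symmetric: an $A$-algebra section $s\colon C\to B/J^2$ yields a continuous $A$-derivation $d(x)=x-s(\bar x) \pmod{J^2}$ with $d|_J=\pi$, and hence via the universal property a $C$-linear retraction of the separated-differentials sequence. I expect the principal technical obstacle to sit in part (1), in the verification that forming the Hausdorff quotient does not enlarge the kernel between the middle and right terms; this is the reason the cleanest reference is to (20.7.17) and (20.7.20) of~\cite{EGA1964_20}, as the author does. Once part (1) is in hand, part (2) is essentially a clean application of the representability of continuous derivations by $\Omega^s$ combined with the elementary bijection between derivations trivializing $J$ and splittings of square-zero extensions.
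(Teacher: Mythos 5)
Your proposal is correct and matches the paper, which offers no written proof of this proposition at all beyond invoking (20.7.17) and (20.7.20) of~\cite{EGA1964_20} --- exactly the citation you fall back on for the only delicate point, namely that passing to Hausdorff quotients does not enlarge the kernel between the middle and right terms. Your expanded treatment of part (2), via the correspondence between $C$-linear retractions of $\Omega^s_{B/A}\otimes_B C\to J/J^2$, continuous $A$-derivations $d\colon B\to J/J^2$ with $d|_J$ the canonical surjection, and trivializations of the square-zero extension $0\to J/J^2\to B/J^2\to C\to 0$, is the standard argument and is sound in the paper's intended setting, where finite modules over formally finitely generated algebras are separated and derivations into them are automatically continuous.
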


\subsubsection{Formal smoothness}

Let $A$ be a ring, $\adic\subseteq A$ is an ideal such that $A$ is
$\adic$-adically complete. Let $B$ be a formally finitely generated
$A$-algebra, then we define
$$
\bar H_{B/A} = \bigcap_{M\in
B{-}\mathrm{mod}^0}\annihil_B(\exa_A(B,M))
$$
We will say that algebra $B$ is formally smooth over the complement
of $V(\adic)$ if $\adic\subseteq r(H_{B/A})$.

\begin{lemma}
Let $A$ be a ring, $\adic\subseteq A$ is an ideal such that $A$ is
$\adic$-adically complete, $B$ is formally finitely generated
$A$-algebra, and $N$ is a finite $B$-module. Assume that $R$ is the
completion of a smooth finitely generated $A$-algebra $R_0$ such
that
$$
0\to J\to R\to B\to 0
$$
Then the natural map
$$
\bigoplus_{\xi\in \hom_B(J/J^2,N)} \exa_A(B,J/J^2)\stackrel{\oplus
\exa_A(B,\xi)}{\longrightarrow} \exa_A(B,N)
$$
is surjective.
\end{lemma}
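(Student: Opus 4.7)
The plan is to follow the proof of Lemma~\ref{lemma:E_surj} essentially verbatim, with a single extra step to handle the passage from the smooth algebra $R_0$ to its completion $R$. Given an arbitrary extension $E\colon 0\to N\to D\to B\to 0$ representing a class in $\exa_A(B,N)$, the goal is to produce an $A$-algebra lifting $\phi\colon R\to D$ of the surjection $R\to B$. Once this is in hand, the restriction $\phi|_J$ lands in $N$ and vanishes on $J^2$ (since $N^2=0$), inducing a $B$-linear map $\bar\xi\colon J/J^2\to N$; the canonical extension $T\colon 0\to J/J^2\to R/J^2\to B\to 0$ then satisfies $E=\bar\xi T$, exhibiting $E$ as the image of $T$ under the component of the direct sum indexed by $\bar\xi$.

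To construct $\phi$, I would first use the $A$-smoothness of $R_0$ to lift the composition $R_0\to R\to B$ along the square-zero surjection $D\to B$, obtaining an $A$-algebra map $\phi_0\colon R_0\to D$. I would then extend $\phi_0$ by continuity to a map $\phi\colon R\to D$. Continuity of $\phi_0$ is automatic because $\phi_0$ is an $A$-algebra homomorphism and thus sends $\adic^n R_0$ into $\adic^n D$. The crucial point is the $\adic$-adic completeness of $D$: this follows from the fact that $B$ is complete by hypothesis, $N$ is complete as a finite module over the complete ring $B$, and $D$ is a Noetherian $A$-algebra (every ideal of $D$ is the extension of an ideal of $B$ by a finite $B$-submodule of $N$). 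Artin-Rees applied to $N\subseteq D$ shows the subspace topology on $N$ agrees with its intrinsic $\adic$-adic topology, and the five lemma applied to the inverse system $\{D/\adic^n D\}$ then yields completeness. Since $R_0$ is dense in $R$ and $\phi$ and the given quotient $R\to B$ agree with $\phi_0\bmod N$ on the dense subset $R_0$, the map $\phi$ lifts $R\to B$ as required.

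The main obstacle, and the only genuinely new ingredient beyond Lemma~\ref{lemma:E_surj}, is the completeness check for $D$. Once $\phi$ has been constructed, the concluding diagram chase producing $\bar\xi$ and the identification $E=\bar\xi T$ carries over word for word from the smooth case, so the surjectivity of $\bigoplus_\xi\exa_A(B,\xi)$ follows.
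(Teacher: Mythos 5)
Your proposal is correct and follows essentially the same route as the paper: lift $R_0\to B$ to $D$ using smoothness of $R_0$, extend to $R$ after checking that $D$ is $\adic$-adically complete (the paper deduces this from $D$ being Noetherian, Artin--Rees for the induced topology on $N$, and exactness of completion, which is the same content as your Artin--Rees plus inverse-limit/five-lemma argument), and then conclude exactly as in Lemma~\ref{lemma:E_surj} via the induced map $\bar\xi\colon J/J^2\to N$ and the identity $E=\bar\xi T$.
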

\begin{proof}
Let $N$ be an arbitrary finitely generated $B$-module and
$$
0\to N\to D\to B\to 0
$$
is an element of $\exa_A(B,N)$. Since $B$ and $N$ are Noetherian,
the ring $D$ is also Noetherian. Moreover, $B$ is $\adic$-adically
complete by definition and $N$ is finitely generated $B$-module,
thus, $N$ is also $\adic$-adically complete. Since $D$ is Noetherian
$\adic$-adic topology from $D$ induces $\adic$-adic topology on $N$.
Since completion is an exact functor, we see that $D$ is complete.

Now, consider the following diagram
$$
\xymatrix{
    {R_0}\ar[r]\ar[rd]\ar@{-->}[d]^{\phi}&{B}\ar@{=}[d]&{}\\
    {D}\ar[r]&{B}\ar[r]&{0}\\
}
$$
where $\phi$ is a lifting of the homomorphism from $R_0$ to $B$. It
exists because of $A$-smoothness of $R_0$. But $D$ is complete,
therefore, there is a unique extension $\widehat{\phi}$ of $\phi$ to
$R$. Moreover, since $\widehat{\phi}$ is a lifting, it maps $J^2$ to
zero. Now, we have the following diagram
$$
\xymatrix{
    {0}\ar[r]&{J/J^2}\ar[r]\ar[d]^{\xi}&{R/J^2}\ar[r]\ar[d]^{\varphi}&{B}\ar[r]\ar@{=}[d]&{0}\\
    {0}\ar[r]&{N}\ar[r]&{D}\ar[r]&{B}\ar[r]&{0}\\
}
$$
where $\varphi$ is the homomorphism induced by $\widehat{\phi}$, and
$\xi$ is the restriction of $\varphi$ to $J/J^2$. The rest of the
proof is a word by word repetition of the end of the proof of
Lemma~\ref{lemma:E_surj}.
\end{proof}

\begin{corollary}\label{cor:fsmooth_cover}
Let $A$ be a ring, $\adic\subseteq A$ is an ideal such that $A$ is
$\adic$-adically complete, and $B$ is a formally finitely generated
$A$-algebra. Assume that $R$ is the completion of a smooth finitely
generated $A$-algebra $R_0$ such that
$$
0\to J\to R\to B\to 0
$$
Then
$$
\bar H_{B/A} = \annihil_B(\exa_A(B,J/J^2)).
$$
\end{corollary}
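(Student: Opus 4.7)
The plan is to deduce this corollary from the preceding (unnumbered) lemma in exactly the same way that Corollary~\ref{cor:smooth_cover} was deduced from Lemma~\ref{lemma:E_surj}. The only subtlety, compared to the smooth case, is that the intersection defining $\bar H_{B/A}$ runs over finite $B$-modules only, so I must check that $J/J^2$ itself is a finite $B$-module in order for it to appear in that intersection.

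First I would verify the finiteness of $J/J^2$ over $B$. Since $A$ is Noetherian (by our standing conventions) and $R_0$ is a finitely generated $A$-algebra, $R_0$ is Noetherian; its $\adic R_0$-adic completion $R$ is therefore also Noetherian. Hence $J\subseteq R$ is finitely generated as an $R$-module, and consequently $J/J^2$ is a finitely generated $R/J = B$-module, i.e.\ $J/J^2\in B{-}\mathrm{mod}^0$.

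The inclusion $\bar H_{B/A}\subseteq \annihil_B(\exa_A(B,J/J^2))$ is now immediate from the definition of $\bar H_{B/A}$ as an intersection ranging over all finite $B$-modules. For the reverse inclusion, I would take an element $x\in \annihil_B(\exa_A(B,J/J^2))$ and an arbitrary finite $B$-module $N$, and then invoke the preceding lemma: every class $E\in \exa_A(B,N)$ is of the form $\sum_i \bar\xi_i E_i$ with $E_i\in \exa_A(B,J/J^2)$ and $\bar\xi_i\in \hom_B(J/J^2,N)$. Because the pushout operations $\exa_A(B,\bar\xi_i)$ are $B$-linear, multiplication by $x$ annihilates each summand $\bar\xi_i E_i$, and therefore annihilates $E$. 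Thus $x\in \annihil_B(\exa_A(B,N))$, and intersecting over all finite $N$ yields the result.

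The main potential obstacle is simply the finite generation of $J/J^2$, since without it the module would not contribute to the intersection $\bar H_{B/A}$; but this is taken care of by the Noetherianness of $R$, which in turn rests on the standing assumption that $A$ is Noetherian together with the hypothesis that $R_0$ is finitely generated. Beyond this check, the argument is a direct translation of the proof of Corollary~\ref{cor:smooth_cover}.
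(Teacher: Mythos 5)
Your argument is correct and is exactly the intended deduction: the paper states this corollary without proof as an immediate consequence of the preceding lemma, just as Corollary~\ref{cor:smooth_cover} follows from Lemma~\ref{lemma:E_surj}. Your extra check that $J/J^2$ is a finite $B$-module (via Noetherianness of $R$) is a sensible verification of the one point the finite-module definition of $\bar H_{B/A}$ requires, and it is correct.
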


\begin{lemma}\label{lemma:fHB_computation}
Let $A$ be a ring, $\adic\subseteq A$ is an ideal such that $A$ is
$\adic$-adically complete, and $B$ is a formally finitely generated
$A$-algebra. Assume that $R$ is the completion of a smooth finitely
generated $A$-algebra $R_0$ such that
$$
0\to J\to R\to B\to 0
$$
Then
$$
\bar H_{B/A} = \annihil_{B} \left(
\hom_{B}(J/J^2,J/J^2)/\der_{A}(B,J/J^2)\right)
$$
\end{lemma}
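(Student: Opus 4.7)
The plan is to mirror the proof of Lemma~\ref{lemma:HB_computation} with the formal-setting analogues already developed. Corollary~\ref{cor:fsmooth_cover} reduces the task to rewriting
$$
\bar H_{B/A}=\annihil_B\bigl(\exa_A(B,J/J^2)\bigr)
$$
in terms of derivations.

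Fix $x\in B$ and form the canonical extension $T:0\to J/J^2\to R/J^2\to B\to 0$ together with its pushout $xT$ along multiplication by $x$ on $J/J^2$:
$$
\xymatrix{
T:&0\ar[r]&J/J^2\ar[r]\ar[d]^{x}&R/J^2\ar[r]\ar[d]&B\ar[r]\ar@{=}[d]&0\\
xT:&0\ar[r]&J/J^2\ar[r]&D\ar[r]&B\ar[r]&0
}
$$
By the preceding (unnumbered) lemma, every element of $\exa_A(B,J/J^2)$ is a pushout $\xi T$ for some $\xi\in\hom_B(J/J^2,J/J^2)$, so $x\in\bar H_{B/A}$ if and only if $xT=0$.

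The sequence $xT$ splits precisely when multiplication by $x$ on $J/J^2$ extends to an $A$-derivation $R/J^2\to J/J^2$; since $J\cdot(J/J^2)=0$ this is the same as an $A$-derivation $R\to J/J^2$. Applying $\hom_B(-,J/J^2)$ to the second fundamental sequence
$$
J/J^2\to\Omega^s_{R/A}\otimes_R B\to\Omega^s_{B/A}\to 0
$$
of Proposition~\ref{prop:sec_fund_s} produces the left exact sequence
$$
0\to\der_A(B,J/J^2)\to\der_A(R,J/J^2)\to\hom_B(J/J^2,J/J^2),
$$
so that the image of the rightmost map is $\der_A(R,J/J^2)/\der_A(B,J/J^2)$, and the quotient of $\hom_B(J/J^2,J/J^2)$ by this image is precisely the module appearing in the statement. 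Taking $\annihil_B$ yields the claimed formula.

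The main obstacle is the lifting step reused from the previous lemma: I must know that every $E\in\exa_A(B,J/J^2)$ in the formal setting is a pushout of the canonical $T$. This works because the middle term $D$ of $E$ is automatically $\adic$-adically complete, so the $A$-smoothness of $R_0$ supplies a lift $R_0\to D$ of $R_0\to B$ which extends uniquely to $R\to D$ by the universal property of completion; with this lift in place, the remainder is pure pushout diagram-chasing, exactly as in Lemma~\ref{lemma:HB_computation}.
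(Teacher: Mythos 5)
Your argument is essentially the paper's own proof: Corollary~\ref{cor:fsmooth_cover} (whose underlying lifting lemma you re-justify via completeness of the middle term $D$ and extension of the lift from $R_0$ to $R$), followed by the pushout $xT$ of the canonical extension and the criterion that $xT$ splits iff multiplication by $x$ extends to an element of $\der_A(R/J^2,J/J^2)=\der_A(R,J/J^2)$. The only difference is cosmetic: as in the polynomial analogue Lemma~\ref{lemma:HB_computation}, the module being annihilated is really $\hom_B(J/J^2,J/J^2)$ modulo the image of $\der_A(R,J/J^2)$, and your extra step with the second fundamental sequence merely reconciles this with the statement's (abusive) notation $\der_A(B,J/J^2)$, which names the kernel of that restriction map rather than a submodule of $\hom_B(J/J^2,J/J^2)$.
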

\begin{proof}
By Corollary~\ref{cor:fsmooth_cover}, we have
$$
\bar H_{B/A} = \annihil_B\left(\exa_A(B,J/J^2)\right).
$$
Let $x\in B$ be an arbitrary element, we can write the following
commutative diagram
$$
\xymatrix{
    {T:}&{0}\ar[r]&{J/J^2}\ar[r]\ar[d]^{x}&{R/J^2}\ar[r]\ar[d]&{B}\ar[r]\ar@{=}[d]&{0}\\
    {xT:}&{0}\ar[r]&{J/J^2}\ar[r]&{D}\ar[r]&{B}\ar[r]&{0}\\
}
$$
The element $x$ belongs $\bar H_{B/A}$ if an only if $xT=0$. The
latter means $xT$ splits. In other words, the multiplication by $x$
extends to an $A$-derivation
$$
d\colon R/J^2\to J/J^2.
$$
Since $J/J^2$ is a module over $R/J=B$,
$$
\der_A(R/J^2,J/J^2)=\der_A(R,J/J^2).
$$
\end{proof}

\begin{lemma}\label{lemma:H_Bness}
Let $A$ be a ring, $\adic\subseteq A$ is an ideal, $B$ is a finitely
generated $A$-algebra. Then
$$
H_{B/A}\widehat{B} = H_{B/A}\otimes_B \widehat{B} = \bar
H_{\widehat{B}/\widehat{A}}
$$
\end{lemma}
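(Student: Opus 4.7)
The plan is to reduce both sides to annihilators of the same module under flat base change, and then invoke the Noetherian compatibility of annihilators. The first equality $H_{B/A}\widehat{B} = H_{B/A}\otimes_B \widehat{B}$ is immediate from flatness of $B\to \widehat{B}$, which holds because $B$ is Noetherian by our standing convention: tensoring the inclusion $H_{B/A}\hookrightarrow B$ with $\widehat{B}$ keeps injectivity, and the image is exactly $H_{B/A}\widehat{B}$.

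For the second equality, I would choose a polynomial presentation $B = R_0/J_0$ with $R_0 = A[x_1,\ldots,x_n]$. Completion yields $\widehat{B} = R/J$ where $R = \widehat{A}\{x_1,\ldots,x_n\}$ is the completion of the smooth $A$-algebra $R_0$, and $J = J_0 R$. By flatness of $R_0\to R$ we have $J/J^2 \cong (J_0/J_0^2)\otimes_B \widehat{B}$. Applying Lemma~\ref{lemma:HB_computation} on the left and Lemma~\ref{lemma:fHB_computation} on the right (using the identification $\der_A(R/J^2, J/J^2) = \der_A(R, J/J^2)$ appearing in its proof) reduces the problem to proving $\annihil_B(M_0)\widehat{B} = \annihil_{\widehat{B}}(\widehat{M})$, where
$$
M_0 = \hom_B(J_0/J_0^2, J_0/J_0^2)/\der_A(R_0, J_0/J_0^2)
$$
and
$$
\widehat{M} = \hom_{\widehat{B}}(J/J^2, J/J^2)/\der_{\widehat{A}}(R, J/J^2).
$$

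The next step is to show $\widehat{M}\cong M_0\otimes_B \widehat{B}$. Since $\Omega_{R_0/A}$ is free of rank $n$ over $R_0$ and $\Omega^s_{R/\widehat{A}}$ is free of rank $n$ over $R$, the derivation modules simplify to $(J_0/J_0^2)^n$ and $(J/J^2)^n$ respectively, and the latter is the base change of the former. Finite presentation of $J_0/J_0^2$ over the Noetherian ring $B$ combined with flatness of $\widehat{B}$ gives the standard identification $\hom_B(J_0/J_0^2, J_0/J_0^2)\otimes_B \widehat{B} \cong \hom_{\widehat{B}}(J/J^2, J/J^2)$, and right exactness of tensor product then yields the desired isomorphism $\widehat{M}\cong M_0\otimes_B \widehat{B}$.

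Finally, $M_0$ is finitely presented over $B$ in the Noetherian setting, so tensoring the exact sequence $0\to \annihil_B(M_0)\to B\to \hom_B(M_0, M_0)$ with the flat algebra $\widehat{B}$ preserves exactness; combined with the previous commutation of $\hom$ with base change this delivers $\annihil_B(M_0)\widehat{B} = \annihil_{\widehat{B}}(M_0\otimes_B \widehat{B})$, which closes the proof. The most delicate point is matching the two derivation modules on the nose, which succeeds precisely because both the polynomial and convergent-series K\"ahler differentials are free of the same rank $n$; without this coincidence the comparison between $H_{B/A}$ and $\bar H_{\widehat{B}/\widehat{A}}$ would not reduce to a single flat-base-change statement.
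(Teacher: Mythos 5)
Your proposal is correct and follows essentially the same route as the paper: present $B$ as a quotient of $A[x]$, compute $H_{B/A}$ and $\bar H_{\widehat{B}/\widehat{A}}$ via Lemma~\ref{lemma:HB_computation} and Lemma~\ref{lemma:fHB_computation}, and use flatness of $B\to\widehat{B}$ together with finite generation to commute $\hom$ and the annihilator with base change, the key coincidence being that $\Omega_{A[x]/A}$ and $\Omega^s_{\widehat{A}\{x\}/\widehat{A}}$ are both free of rank $n$. Your write-up only makes explicit a few steps (the identification of the derivation modules with $(J/J^2)^n$ and the annihilator--flat-base-change argument) that the paper leaves implicit.
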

\begin{proof}
Consider an arbitrary exact sequence
$$
0\to J\to A[x]\to B\to 0
$$
where $x=\{x_1,\ldots,x_n\}$ is a finite set of indeterminates.
Then, by Lemma~\ref{lemma:HB_computation},
$$
H_{B/A} = \annihil_B
\left(\hom_B(J/J^2,J/J^2)/\hom_{B}(\Omega_{A[x]/A}\otimes_{A[x]}B,J/J^2)\right)
$$
Since $J/J^2$, $\Omega_{A[x]/A}\otimes_{A[x]}B$ are finitely
generated $B$-modules and $\widehat{B}$ is a flat $B$-module, then
\begin{multline*}
H_{B/A}\widehat{B} = H_{B/A}\otimes_{B}\widehat{B} =\\
=\annihil_{\widehat{B}}
\left(\hom_{\widehat{B}}(\widehat{J}/\widehat{J}^2,\widehat{J}/\widehat{J}^2)/\hom_{\widehat{B}}(\Omega^s_{\widehat{A}\{x\}/\widehat{A}}\otimes_{\widehat{A}\{x\}}\widehat{B},\widehat{J}/\widehat{J}^2)\right)=\\
=\annihil_{\widehat{B}}
\left(\hom_{\widehat{B}}(\widehat{J}/\widehat{J}^2,\widehat{J}/\widehat{J}^2)/\der_{\widehat{A}}(\widehat{B},\widehat{J}/\widehat{J}^2)\right)=\\
= \bar H_{\widehat{B}/\widehat{A}}
\end{multline*}
The latter equality holds because of
Lemma~\ref{lemma:fHB_computation}.
\end{proof}

\begin{remark}
Lemma~\ref{lemma:H_Bness} shows that if an $\widehat{A}$-algebra
$\bar B$ is a completion of a finitely generated $A$-algebra being
smooth over the complement of $V(\adic)$, then $\bar B$ is formally
smooth over the complement of $V(\widehat{\adic})$. So, the
condition of formal smoothness is necessary for algebraization of an
$\widehat{A}$-algebra $\bar B$ by a smooth $A$-algebra.
\end{remark}

\begin{lemma}\label{lemma:fsmooth_Elkik}
Let $A$ be a ring $\adic\subseteq A$ is an ideal such that $A$ is
$\adic$-adically complete, and $B$ is a formally finitely generated
$A$-algebra presented in the following form
$$
B = A\{x_1,\ldots, x_n\}/J,\quad \mbox{ where } \quad J =
(f_1,\ldots, f_q)
$$
Then
$$
H_J \subseteq \bar H_{B/A}\quad \mbox{ and } \quad r(H_J) = r(\bar
H_{B/A})
$$
\end{lemma}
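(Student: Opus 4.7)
The plan is to apply Lemma~\ref{lemma:fHB_computation} with $R = A\{x_1, \ldots, x_n\}$, which is the completion of the smooth $A$-algebra $A[x]$ and presents $B$ as $R/J$. This yields
$$
\bar H_{B/A} = \annihil_B\!\left(\hom_B(J/J^2, J/J^2)/\der_A(R, J/J^2)\right).
$$
Hence the inclusion $H_J \subseteq \bar H_{B/A}$ reduces, by linearity, to showing that every product $h = \delta g$, with $\delta$ a $p$-minor of the Jacobian $(\partial f_{\alpha_i}/\partial x_j)$ of $f_{\alpha_1}, \ldots, f_{\alpha_p}$ and $g \in (J_\alpha : J)$, annihilates the displayed cokernel. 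So for each $\xi \in \hom_B(J/J^2, J/J^2)$ I would produce an $A$-derivation $d \colon R \to J/J^2$ whose induced $B$-linear map $\bar d$ equals $h \xi$.

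For the construction, let $M_0$ denote the $p \times p$ submatrix of the Jacobian whose determinant is $\delta$, indexed by columns $j_1 < \cdots < j_p$, and let $N = \operatorname{adj}(M_0)$, so that $N M_0 = \delta I_p$. I would set $d(x_{j_r}) = g \sum_s N_{rs}\, \xi(\bar f_{\alpha_s})$ for $r = 1, \ldots, p$ and $d(x_j) = 0$ otherwise; this extends uniquely to a continuous $A$-derivation of the complete ring $R$ with values in $J/J^2$. The identity $N M_0 = \delta I_p$ immediately yields $d(f_{\alpha_i}) = h \xi(\bar f_{\alpha_i})$ in $J/J^2$ for each $i \in \{1, \ldots, p\}$.

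The main obstacle is the analogous verification for $k \notin \alpha$. Here I would exploit the defining property of $g \in (J_\alpha : J)$: write $g f_k = \sum_\ell c_{k\ell}\, f_{\alpha_\ell}$ in $R$, differentiate with respect to $x_{j_r}$, and reduce modulo $J$ to obtain identities in $B$ expressing $\bar g \cdot \overline{\partial f_k / \partial x_{j_r}}$ as a $B$-linear combination of entries of $M_0$. Combining these with the $B$-linearity of $\xi$, the identity $\xi(\overline{g f_k}) = \bar g\, \xi(\bar f_k)$, and the vanishing $J \cdot (J/J^2) = 0$ (which kills the Leibniz cross terms of the form $d(g) f_k$ and $d(c_{k\ell}) f_{\alpha_\ell}$) should deliver the required equality $d(f_k) = h \xi(\bar f_k)$; the bookkeeping of these cross-term cancellations is the delicate part of the argument.

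The radical equality $r(H_J) = r(\bar H_{B/A})$ then follows. The inclusion $r(H_J) \subseteq r(\bar H_{B/A})$ is immediate from the above. For the reverse, given a prime $\mathfrak p$ of $B$ not containing $H_J$, some generator $\delta g$ is a unit modulo $\mathfrak p$; lifting to the preimage $\mathfrak P$ of $\mathfrak p$ in $R$, invertibility of $g$ forces $J R_{\mathfrak P} = J_\alpha R_{\mathfrak P}$, while invertibility of $\delta$ triggers the Jacobian criterion for formal smoothness of $B_{\mathfrak p}$ over $A$ (which, after passing to a suitable algebraic model via Lemma~\ref{lemma:H_Bness} and invoking Lemma~\ref{lemma:HB_smooth}, gives $\bar H_{B/A} \not\subseteq \mathfrak p$).
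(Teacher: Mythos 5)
Your verification of the inclusion $H_J\subseteq \bar H_{B/A}$ is essentially sound and is genuinely different from the paper, which simply cites the proof of Lemma~5.4.6 of Gabber--Ramero (with $\Omega^s$ in place of $\Omega$); your explicit derivation $d$ is exactly that computation made concrete. One caution on the step you yourself flag as delicate: applying $d$ to the relation $gf_k=\sum_\ell c_{k\ell}f_{\alpha_\ell}$ and killing the Leibniz cross terms $d(g)f_k$, $d(c_{k\ell})f_{\alpha_\ell}$ (as your wording suggests) only yields $\bar g\bigl(d(f_k)-h\,\xi(\bar f_k)\bigr)=0$, which is not enough. The correct route is to differentiate the relation, reduce modulo $J$ to get $\bar g\,\overline{\partial_{j_r}f_k}=\sum_\ell \bar c_{k\ell}\,\overline{\partial_{j_r}f_{\alpha_\ell}}$, and substitute this into the expansion $d(f_k)=\sum_r \overline{\partial_{j_r}f_k}\,d(x_{j_r})$: the factor $g$ you built into $d(x_{j_r})$ is exactly what absorbs it, and then the adjugate identity $M_0N=\delta I$ (you need this side of it, not only $NM_0=\delta I$) together with $\xi(\overline{gf_k})=\bar g\,\xi(\bar f_k)$ gives $d(f_k)=h\,\xi(\bar f_k)$ on the nose, with no leftover factor of $\bar g$.

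The genuine gap is in the radical equality: you prove the wrong direction. You take a prime $\mathfrak p$ with $H_J\nsubseteq\mathfrak p$ and conclude $\bar H_{B/A}\nsubseteq\mathfrak p$; but this implication is immediate from the inclusion $H_J\subseteq\bar H_{B/A}$ (any element of $H_J$ outside $\mathfrak p$ already lies in $\bar H_{B/A}$), and, ranging over primes, it says precisely $r(H_J)\subseteq r(\bar H_{B/A})$ --- the direction you called ``immediate from the above''. The substantive half, $r(\bar H_{B/A})\subseteq r(H_J)$, is the implication ``$\bar H_{B/A}\nsubseteq\mathfrak p$ implies $H_J\nsubseteq\mathfrak p$'', and it is nowhere addressed; it is not formal, since one must manufacture from some $t\in\bar H_{B/A}\setminus\mathfrak p$ an element of the very particular ideal $H_J$ (a Jacobian minor times an element of $(J_\alpha:J)$) outside $\mathfrak p$. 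The paper does this by using $t$ to split the sequence $0\to (J/J^2)_{\mathfrak p}\to \Omega^s_{A\{x\}/A}\otimes_{A\{x\}}B_{\mathfrak p}\to(\Omega^s_{B/A})_{\mathfrak p}\to 0$, choosing (after reordering) $f_1,\ldots,f_k$ whose images form a basis of $(J/J^2)_{\mathfrak p}$, showing these actually generate $J_{\mathfrak p}$ so that $t'J\subseteq(f_1,\ldots,f_k)$ for some $t'\notin\mathfrak p$, and checking that $\det(\partial f_i/\partial x_j)_{i,j\leqslant k}$ is invertible in $B_{\mathfrak p}$; the product of these two elements then lies in $H_J\setminus\mathfrak p$. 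Without an argument of this kind the lemma is unproved. (Incidentally, the detour you sketch through Lemma~\ref{lemma:H_Bness} presumes $B$ is the completion of a finitely generated $A$-algebra, which a general formally finitely generated $B$ need not be; but that paragraph proves only the redundant direction in any case.)
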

\begin{proof}
The proof of the inclusion $H_J\subseteq \bar H_{B/A}$ is a word by
word repetition of the proof of Lemma~5.4.6 of~\cite{GabRam}, where
we should use $\Omega^s$ instead of $\Omega$.

To proof coincidence of the radicals, it is enough to show that, if
a prime ideal $\mathfrak p\subseteq B$ does not contain $\bar
H_{B/A}$, then it does not contain $H_J$. Let $t\in \bar
H_{B/A}\setminus \mathfrak p$. Then the map $J/J^2\to J/J^2$ given
by $x\mapsto tx$ lifts to a derivation $A\{x\}\to J/J^2$ be
definition. In particular, the sequence
$$
0\to J_{\mathfrak p}/J^2_{\mathfrak p} \to
\Omega^s_{A\{x\}/A}\otimes_{A\{x\}} B_{\mathfrak p}\to
(\Omega^s_{B/A})_{\mathfrak p} \to 0
$$
is split exact. Since $B_p$ is local and $f_1,\ldots,f_q$ generate
$J$, after reordering $f_i$, we may assume that, for some $k$,
images of $f_1,\ldots,f_k$ form a basis of $J_{\mathfrak
p}/J^2_{\mathfrak p}$. Since $B_\mathfrak p$ does not contain
idempotents except $0$ and $1$, elements $f_1,\ldots,f_k$ generate
$J_\mathfrak p$. In particular, there exists an element $t\in
B\setminus \mathfrak p$ such that $tJ\subseteq (f_1,\ldots,f_k)$.

Since $B_{\mathfrak p}$ is local, after reordering of $x_i$, we may
assume that
$$
df_1,\ldots,df_k,dx_{k+1},\ldots,dx_n
$$
is a basis of
$$
\Omega^s_{A\{x\}/A}\otimes_{A\{x\}} B_{\mathfrak p}
$$
because $dx_i$ are generators, and the set $df_1,\ldots, df_k$ is a
basis of a direct summand. Hence the element $h=\det (\partial
f_i/\partial x_j)_{i,j\leqslant k}$ is invertible in $B_{\mathfrak
p}$, thus, is not in $\mathfrak p$. Therefore, the element $ht$ is
not in $\mathfrak p$ and belongs to $H_J$.

\end{proof}

\begin{remark}
In particular, the notion of formal smoothness used
in~\cite[Section~III.4]{Elkik73} coincides with the one used in this
paper. Hence we can formally cite results from~\cite{Elkik73}.
\end{remark}

\subsection{The support of non-projectivity}

In this section, we define an analogue of the Gabber's ideal for
modules. This ideal is used instead of the Fitting ideal to
algebraize finite modules. It describes points where our module is
not projective.

Let $A$ be a ring and $P$ is an arbitrary $A$-module. Then we define
the ideal $H_P$ by the formula
$$
H_P = \bigcap_{M\in A-\mathrm{mod}}\annihil_A
\left(\ext^1_A(P,M)\right)
$$

\begin{lemma}
Let $A$ be a ring and we are given an exact sequence of $A$-modules
$$
0\to K\to F_0\to P\to 0
$$
where $F_0$ is projective. Then the map
$$
\oplus_g\ext^1_A(P,g)\colon\bigoplus_{g\in \hom_A (K, M)}
\ext^1_A(P,K)\to \ext^1_A(P,M)
$$
is surjective.
\end{lemma}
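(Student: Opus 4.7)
The plan is to imitate exactly the strategy of Lemma~\ref{lemma:E_surj}, with projectivity of $F_0$ playing the role that smoothness of $R$ played there, and the pushforward of extensions along module homomorphisms playing the role of the pushforward of algebra extensions.

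More concretely, I would start with an arbitrary class $\eta \in \ext^1_A(P,M)$, represented by an extension
$$
\eta \colon\; 0 \to M \to E \to P \to 0.
$$
Since $F_0$ is projective, the identity map $P \to P$ lifts along $E \to P$ to an $A$-module homomorphism $\phi \colon F_0 \to E$. The composite $K \hookrightarrow F_0 \xrightarrow{\phi} E \twoheadrightarrow P$ is zero, so $\phi$ restricts to a homomorphism $g \colon K \to M$. This produces the commutative diagram
$$
\xymatrix{
0 \ar[r] & K \ar[r] \ar[d]^{g} & F_0 \ar[r] \ar[d]^{\phi} & P \ar[r] \ar@{=}[d] & 0 \\
0 \ar[r] & M \ar[r] & E \ar[r] & P \ar[r] & 0
}
$$
Let $\xi_0 \in \ext^1_A(P,K)$ denote the class of the top row. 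By the universal property of the pushout (equivalently, by the standard description of $\ext^1(P,g)$ on Yoneda classes), the bottom extension is the pushforward $g_*\xi_0 = \ext^1_A(P,g)(\xi_0)$. Hence $\eta$ lies in the image of $\ext^1_A(P,g)$, which is the $g$-summand of the direct sum, proving surjectivity.

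There is essentially no obstacle beyond the diagram chase: the only place where hypotheses are used is in producing the lift $\phi$, and this is immediate from projectivity of $F_0$. The rest is formal, and in fact this proof is slightly simpler than that of Lemma~\ref{lemma:E_surj} since we do not need to pass to a quotient by $J^2$ (there is no square-zero condition to enforce on the module side). The same single element $\xi_0 \in \ext^1_A(P,K)$ hits every class $\eta$, with only the map $g$ depending on $\eta$; so in fact one sees that already a single copy of $\ext^1_A(P,K)$, summed over all $g \in \hom_A(K,M)$, suffices.
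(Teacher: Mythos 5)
Your proof is correct. It differs mildly in mechanism from the paper's: the paper extends the resolution one step further, $F_2\stackrel{d_2}{\to}F_1\to F_0\to P\to 0$, and works with the cocycle description of $\ext^1_A(P,M)$ as homomorphisms $\varphi\colon F_1\to M$ killing $\Im d_2$; such a $\varphi$ factors through $K=\operatorname{coker} d_2$ as $\varphi=g\circ\xi$, and the class of $\xi$ is the distinguished element $\bar\xi\in\ext^1_A(P,K)$, so $\bar\varphi=\ext^1_A(P,g)(\bar\xi)$. You instead stay at the level of Baer extension classes: you lift $F_0\to P$ through $E\to P$ using projectivity of $F_0$, restrict to get $g\colon K\to M$, and invoke the standard fact that a morphism of extensions which is the identity on $P$ identifies the target with the pushforward $g_*\xi_0$. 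Both arguments pivot on the same universal class of $0\to K\to F_0\to P\to 0$ (this is exactly what makes Corollary~\ref{cor:H_P_K} work), and your version is the literal module analogue of the proof of Lemma~\ref{lemma:E_surj}, whereas the paper's version trades the pushout identification for a cocycle computation; the content and the dependence on hypotheses (projectivity only to produce the lift) are the same. Your closing observation that a single class $\xi_0$ suffices, with only $g$ varying, is also implicit in the paper's proof and is what the subsequent corollaries exploit.
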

\begin{proof}
Consider the following projective resolution for $P$
$$
\xymatrix{
    {F_2}\ar[r]^{d_2}&{F_1}\ar[d]^{\xi}\ar[r]&{F_0}\ar@{=}[d]\ar[r]&{P}\ar[r]\ar@{=}[d]&{0}\\
    {0}\ar[r]&{K}\ar[r]&{F_0}\ar[r]&{P}\ar[r]&{0}\\
}
$$
where $K$ is the cokernel of $d_2$ and $\xi$ is the corresponding
quotient homomorphism. The map $\xi$ corresponds to an element
$\bar\xi$ in $\ext^1_A(P,K)$. Let $\varphi\colon F_1\to M$ be a
homomorphism corresponding to some element $\bar\varphi$ of
$\ext^1_A(P,M)$, then, by definition of $\xi$, there is a
homomorphism $g\colon K\to M$ such that $g\circ\xi = \varphi$. Thus,
the element $\bar \varphi$ belongs to the image of $\ext^1_A(P,g)$.
\end{proof}

\begin{corollary}\label{cor:H_P_K}
Let $A$ be a ring and we are given an exact sequence of $A$-modules
$$
0\to K\to F_0\to P\to 0,
$$
where $F_0$ is projective. Then
$$
H_P = \annihil_A \left(\ext^1_A(P,K)\right).
$$
\end{corollary}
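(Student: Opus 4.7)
The plan is to deduce Corollary~\ref{cor:H_P_K} directly from the preceding lemma; no further construction is needed. The one inclusion $H_P\subseteq \annihil_A(\ext^1_A(P,K))$ is tautological: by definition $H_P$ is the intersection of the annihilators $\annihil_A(\ext^1_A(P,M))$ over \emph{all} $A$-modules $M$, so specializing to $M=K$ gives the containment at once.

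For the reverse inclusion $\annihil_A(\ext^1_A(P,K))\subseteq H_P$, I would fix an element $a\in\annihil_A(\ext^1_A(P,K))$ and an arbitrary $A$-module $M$, and show that $a$ annihilates $\ext^1_A(P,M)$. Pick any class $\bar\varphi\in\ext^1_A(P,M)$. By the preceding lemma, the map
$$
\bigoplus_{g\in\hom_A(K,M)}\ext^1_A(P,K)\stackrel{\oplus_g\ext^1_A(P,g)}{\longrightarrow}\ext^1_A(P,M)
$$
is surjective, so $\bar\varphi=\sum_g\ext^1_A(P,g)(e_g)$ for finitely many $g\in\hom_A(K,M)$ and classes $e_g\in\ext^1_A(P,K)$. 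Since each $\ext^1_A(P,g)$ is $A$-linear, we get $a\bar\varphi=\sum_g\ext^1_A(P,g)(ae_g)=0$, because $ae_g=0$ by the choice of $a$. As $\bar\varphi$ and $M$ were arbitrary, $a\in\bigcap_M\annihil_A(\ext^1_A(P,M))=H_P$.

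There is no real obstacle: the preceding lemma is tailored precisely to move annihilator information from the universal module $K$ to arbitrary coefficients $M$, and the corollary is simply the formal consequence of that surjectivity together with $A$-linearity of the Ext functor in the second argument.
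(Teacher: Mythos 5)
Your proof is correct and follows exactly the route the paper intends: the corollary is stated as an immediate consequence of the preceding surjectivity lemma, with the trivial inclusion from the definition of $H_P$ and the reverse inclusion from the surjectivity of $\oplus_g\ext^1_A(P,g)$ together with $A$-linearity.
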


\begin{corollary}
Let $A$ be a ring and $P$ a finite $A$-module, then
$$
H_P = \bigcap_{M\in A{-}\mathrm{mod}^0}\annihil_A\left(
\ext^1_A(P,M)\right).
$$
\end{corollary}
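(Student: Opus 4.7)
The plan is to verify two inclusions. The inclusion $H_P \subseteq \bigcap_{M \in A{-}\mathrm{mod}^0} \annihil_A(\ext^1_A(P,M))$ is tautological: the defining intersection for $H_P$ runs over every $A$-module, while the right-hand side restricts to the subcategory $A{-}\mathrm{mod}^0$ of finite $A$-modules, so the right-hand intersection is a priori the larger of the two.

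The real content lies in the reverse inclusion, and the essential input is the Noetherian hypothesis imposed in Section~1 together with Corollary~\ref{cor:H_P_K}. Since $P$ is finite and $A$ is Noetherian, I would choose a surjection $F_0 \to P$ with $F_0$ finite free; then the kernel $K$ of this surjection is a submodule of a finite module over a Noetherian ring, so $K$ itself lies in $A{-}\mathrm{mod}^0$. Applying Corollary~\ref{cor:H_P_K} to this particular resolution gives the equality $H_P = \annihil_A(\ext^1_A(P,K))$. If now $x \in A$ annihilates $\ext^1_A(P,M)$ for every finite module $M$, then in particular $x$ annihilates $\ext^1_A(P,K)$, hence $x \in H_P$.

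There is no real obstacle to this argument: the whole point is the observation that the Noetherian assumption permits one to take the kernel $K$ in a finite free presentation of $P$ to be itself finite, so that the witness module needed by Corollary~\ref{cor:H_P_K} already belongs to the restricted family $A{-}\mathrm{mod}^0$. Without the Noetherian hypothesis, $K$ need not be finite and this reduction would fail, which is presumably why the paper states the assumption as a blanket convention rather than a hypothesis of this corollary.
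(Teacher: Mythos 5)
Your proof is correct and is exactly the intended argument: the paper leaves this corollary without an explicit proof, but it is meant to follow from Corollary~\ref{cor:H_P_K} in precisely the way you describe, with the Noetherian blanket hypothesis ensuring that the kernel $K$ of a finite free presentation of $P$ is itself a finite module, so that the single witness module computing $H_P$ already lies in $A{-}\mathrm{mod}^0$. Nothing is missing.
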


The following proposition explains the geometric meaning of the
ideal $H_P$.

\begin{proposition}\label{lemma:H_P_proj}
Let $A$ be a ring, $P$ a finite $A$-module, and $\mathfrak
p\subseteq A$ is a prime ideal. Then $H_P\nsubseteq \mathfrak p$ if
and only if $P_\mathfrak p$ is projective $A_\mathfrak p$-module.
\end{proposition}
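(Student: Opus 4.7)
The plan is to use Corollary~\ref{cor:H_P_K} to reduce the claim to a statement about a single $\operatorname{Ext}^1$ module, and then exploit the fact that, over a Noetherian ring, localization commutes with $\operatorname{Ext}$ of a finite module.

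First I would choose a presentation $0\to K\to F_0\to P\to 0$ with $F_0$ a finitely generated free $A$-module; since $A$ is Noetherian and $P$ is finite, $K$ is finite as well. By Corollary~\ref{cor:H_P_K},
$$
H_P=\operatorname{ann}_A\bigl(\operatorname{Ext}^1_A(P,K)\bigr).
$$
Applying $\hom_A(-,K)$ to the chosen presentation and using projectivity of $F_0$ realizes $\operatorname{Ext}^1_A(P,K)$ as a quotient of $\hom_A(K,K)$, which is a finite $A$-module since $K$ is finite and $A$ is Noetherian. Hence $\operatorname{Ext}^1_A(P,K)$ is a finitely generated $A$-module, and for a finitely generated module $N$ one has $N_{\mathfrak{p}}=0$ if and only if the annihilator of $N$ is not contained in $\mathfrak{p}$. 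Consequently,
$$
H_P\nsubseteq\mathfrak{p}\quad\Longleftrightarrow\quad \operatorname{Ext}^1_A(P,K)_{\mathfrak{p}}=0.
$$

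Next I would localize the presentation at $\mathfrak{p}$, obtaining an exact sequence $0\to K_{\mathfrak{p}}\to (F_0)_{\mathfrak{p}}\to P_{\mathfrak{p}}\to 0$ with $(F_0)_{\mathfrak{p}}$ projective. Because $P$ is finitely presented (here $A$ is Noetherian), $\operatorname{Ext}$ commutes with the localization: $\operatorname{Ext}^1_A(P,K)_{\mathfrak{p}}=\operatorname{Ext}^1_{A_{\mathfrak{p}}}(P_{\mathfrak{p}},K_{\mathfrak{p}})$. Applying Corollary~\ref{cor:H_P_K} to the localized sequence identifies this group, up to annihilator, with $H_{P_{\mathfrak{p}}}$, so
$$
\operatorname{Ext}^1_{A_{\mathfrak{p}}}(P_{\mathfrak{p}},K_{\mathfrak{p}})=0 \quad\Longleftrightarrow\quad H_{P_{\mathfrak{p}}}=A_{\mathfrak{p}}.
$$

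Finally I would observe that $P_{\mathfrak{p}}$ is projective if and only if $\operatorname{Ext}^1_{A_{\mathfrak{p}}}(P_{\mathfrak{p}},N)=0$ for every $A_{\mathfrak{p}}$-module $N$, and by Corollary~\ref{cor:H_P_K} it suffices to test this on $N=K_{\mathfrak{p}}$. Combining the two equivalences displayed above yields $H_P\nsubseteq\mathfrak{p}\Leftrightarrow P_{\mathfrak{p}}$ is projective. The only technical point requiring care is the finiteness of $\operatorname{Ext}^1_A(P,K)$ (without which the annihilator would not detect localization at $\mathfrak{p}$), but this is guaranteed by the Noetherian hypothesis built into the conventions of Section~1; so no genuine obstacle arises.
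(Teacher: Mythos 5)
Your proof is correct, and it reaches the conclusion by a genuinely different mechanism than the paper, even though both hinge on the same reduction via Corollary~\ref{cor:H_P_K} to the single group $\ext^1_A(P,K)$ attached to a finite free presentation $0\to K\to F_0\to P\to 0$. You exploit that $\ext^1_A(P,K)$ is a finite $A$-module (a quotient of $\hom_A(K,K)$), so that $H_P\nsubseteq\mathfrak p$ is equivalent to the vanishing of its localization, and then use the base-change isomorphism $\ext^1_A(P,K)_{\mathfrak p}\cong\ext^1_{A_{\mathfrak p}}(P_{\mathfrak p},K_{\mathfrak p})$ for finite modules over a Noetherian ring; vanishing of the latter splits the localized presentation and hence is equivalent to projectivity of $P_{\mathfrak p}$. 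The paper instead argues entirely on the level of liftings: an element $t\in H_P\setminus\mathfrak p$ lifts multiplication by $t$ on $K$ to a map $F\to K$, which after localizing at $\mathfrak p$ (where $t$ is invertible) splits the sequence; conversely, a splitting $\psi\colon F_{\mathfrak p}\to K_{\mathfrak p}$ is multiplied by a suitable $s\notin\mathfrak p$ to clear denominators (using only that $F$ is finite free) and yields a lifting of multiplication by $s$, so $s\in H_P$ by Corollary~\ref{cor:H_P_K}. Your route is shorter and more conceptual but consumes two extra standard inputs -- finiteness of $\ext^1_A(P,K)$ and compatibility of $\ext$ with localization -- which are available because the conventions of Section~1 make $A$ Noetherian; the paper's route is more elementary and self-contained, needing nothing beyond the definition of $H_P$ and the finiteness of the presentation. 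One small simplification to your last step: the detour through $H_{P_{\mathfrak p}}$ is unnecessary, since the vanishing of $\ext^1_{A_{\mathfrak p}}(P_{\mathfrak p},K_{\mathfrak p})$ already splits $0\to K_{\mathfrak p}\to (F_0)_{\mathfrak p}\to P_{\mathfrak p}\to 0$ and gives projectivity directly.
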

\begin{proof}

Suppose that $H_P\nsubseteq \mathfrak p$ and let $t\in
H_P\setminus\mathfrak p$. Consider some exact sequence $0\to K\to
F\to P\to 0$, where $F$ is a finite free $A$-module. By the
definition of $H_P$, the homomorphism $t\colon K\to K$ lifts to a
homomorphism $F\to K$. After the localization by $\mathfrak p$, the
multiplication by $t$ becomes an isomorphism. So, the sequence $0\to
K_\mathfrak p\to F_\mathfrak p\to P_\mathfrak p\to 0$ splits.

Conversely, suppose $P_\mathfrak p$ is projective. By
Corollary~\ref{cor:H_P_K}, it is enough to show that there is an
$s\notin \mathfrak p$ such that $s\colon K\to K$ lifts to a
homomorphism $F\to K$. Since the sequence $0\to K_{\mathfrak p}\to
F_{\mathfrak p}\to P_{\mathfrak p}\to 0$ splits, there is a
splitting map $\psi\colon F_\mathfrak p\to K_\mathfrak p$. The
module $F$ is finite, hence, there is an element $s\notin \mathfrak
p$ such that $s\psi\colon F\to K$ is a well-defined lifting of
$s\colon K\to K$.

\end{proof}

\begin{proposition}\label{prop:H_Omega}
Let $A$ be a ring, $\adic\subseteq A$ is an ideal such that $A$ is a
$\adic$-adically complete ring, and $B$ is a formally finitely
generated $A$-algebra. Then $\bar H_{B/A}\subseteq
H_{\Omega^s_{B/A}}$.
\end{proposition}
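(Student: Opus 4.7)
The plan is to fix a presentation $B = A\{x_1,\ldots,x_n\}/J$ with $J$ finitely generated and then give parallel, explicit descriptions of $\exa_A(B,M)$ and of $\ext^1_B(\Omega^s_{B/A},M)$ that realize the latter as a $B$-submodule of the former for every finite $B$-module $M$. Once this is done, the inclusion $\annihil_B(\exa_A(B,M))\subseteq\annihil_B(\ext^1_B(\Omega^s_{B/A},M))$ is automatic; intersecting over $M\in B{-}\mathrm{mod}^0$ (and using the corollary that $H_P$ for finite $P$ may be computed against finite modules only) then gives the desired inclusion $\bar H_{B/A}\subseteq H_{\Omega^s_{B/A}}$.

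Concretely, I would set $R=A\{x\}$, let $F=\Omega^s_{R/A}\otimes_R B$ (which is finite free over $B$), and let $N$ be the image of the canonical map $J/J^2\to F$ coming from Proposition~\ref{prop:sec_fund_s}, so that there is a short exact sequence $0\to N\to F\to \Omega^s_{B/A}\to 0$ with $F$ projective. Applying $\hom_B(-,M)$ yields
$$\ext^1_B(\Omega^s_{B/A},M)=\hom_B(N,M)/\Im\hom_B(F,M).$$
On the other hand, arguing as in the proof of Lemma~\ref{lemma:fHB_computation}, every class in $\exa_A(B,M)$ is the pushout of $0\to J/J^2\to R/J^2\to B\to 0$ along some $\bar\xi\colon J/J^2\to M$, and two $A$-algebra lifts $R\to D$ of $R\to B$ differ by an $A$-derivation $R\to M$; since $M$ is a finite $B$-module, such derivations are exactly $B$-linear maps $F\to M$, giving
$$\exa_A(B,M)=\hom_B(J/J^2,M)/\Im\hom_B(F,M),$$
where $\Im\hom_B(F,M)$ factors through the inclusion $\hom_B(N,M)\hookrightarrow \hom_B(J/J^2,M)$ induced by the surjection $J/J^2\twoheadrightarrow N$.

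Comparing the two presentations identifies $\ext^1_B(\Omega^s_{B/A},M)$ with the $B$-submodule of $\exa_A(B,M)$ consisting of classes represented by maps $J/J^2\to M$ that factor through $N$, and the inclusion of annihilators follows. The only point requiring any care is the diagram chase showing that the two occurrences of $\Im\hom_B(F,M)$—one obtained by restricting $A$-derivations $R\to M$ to $J/J^2$, the other from the contravariant functoriality of $\hom_B(-,M)$ applied to $N\hookrightarrow F$—coincide inside $\hom_B(N,M)$; this is a direct consequence of the identification of the connecting map $J/J^2\to \Omega^s_{R/A}\otimes_R B$ with $\bar f\mapsto df\otimes 1$.
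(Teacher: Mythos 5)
Your proposal is correct and follows essentially the same route as the paper: both fix a presentation $B=A\{x\}/J$, use the second fundamental sequence $0\to N\to \Omega^s_{A\{x\}/A}\otimes_{A\{x\}}B\to \Omega^s_{B/A}\to 0$, and describe $\exa_A(B,M)$ and $\ext^1_B(\Omega^s_{B/A},M)$ as quotients of $\hom_B(J/J^2,M)$ and $\hom_B(N,M)$ by the common image of $\hom_B(\Omega^s_{A\{x\}/A}\otimes_{A\{x\}}B,M)$, so that $\ext^1_B(\Omega^s_{B/A},M)$ sits inside $\exa_A(B,M)$ and annihilators compare. The only difference is organizational: you run the comparison over all finite test modules $M$ and intersect (using the corollary that $H_P$ for finite $P$ is computed against finite modules), while the paper specializes to the single test module $M=J/J^2$.
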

\begin{proof}
We represent our algebra $B$ as the following quotient
$$
0\to J\to A\{x\}\to B\to 0
$$
where $x=\{x_1,\ldots,x_n\}$ is a finite set of indeterminates. By
the second fundamental sequence, we have
$$
\xymatrix{
    {0}\ar[r]&{K}\ar[r]&{J/J^2}\ar[r]&{\Omega^s_{A\{x\}/A}\otimes_{A\{x\}} B}\ar[r]&{\Omega^s_{B/A}}\ar[r]&{0}\\
}
$$
Then by Lemmas~\ref{lemma:HB_computation}
and~\ref{lemma:fHB_computation}, we have
\begin{align*}
\exa_A\left(B,{J/J^2}\right) &= \hom_{B}\left(J/J^2,{J/J^2}\right)/\hom_{B}\left(\Omega^s_{A\{x\}/A}\otimes_{A\{X\}} B, {J/J^2}\right)\\
\ext^1_B\left(\Omega^s_{B/A},{J/J^2}\right) &=
\hom_{B}\left(\left(J/J^2\right)/K,{J/J^2}\right)/\hom_{B}\left(\Omega^s_{A\{x\}/A}\otimes_{A\{x\}}
B, {J/J^2}\right)
\end{align*}
Hence, the inclusion
$$
\ext^1_B\left(\Omega_{B/A},{J/J^2}\right)\subseteq
\exa_A\left(B,{J/J^2}\right)
$$
holds. Therefore, we have
$$
\bar H_{B/A}
=\annihil_B\left(\exa_A\left(B,{J/J^2}\right)\right)\subseteq
\annihil_B \left(\ext^1_B\left(\Omega^s_{B/A},{J/J^2}\right)\right)
= H_{\Omega^s_{B/A}}
$$
\end{proof}

\begin{remark}
Similarly to the proof of Proposition~\ref{prop:H_Omega}, one can
show the following statement. If $A$ is a ring and $B$ is a finitely
generated $A$-algebra, then
$$
H_{B/A}\subseteq H_{\Omega_{B/A}}
$$
But we will not need this fact.
\end{remark}

\section{Lifting of pairs of homomorphisms}

\subsection{Lifting of module homomorphisms}

\begin{lemma}\label{lemma:induced_struct}
Let $A$ be a ring, $\adic=(t)\subseteq A$ a principal ideal, and we
are given an exact sequence of $A$-modules
$$
0\to K\to M\stackrel{\varphi}{\longrightarrow}N\to 0
$$
Let $\varphi_n$ be the restriction of $\varphi$ on $\adic ^n M$ and
$\varphi^d$ is the induced map from $M/\adic^dM\to N/\adic^d N$.
Suppose that, for some natural number $c$ and for all $n\geqslant
c$, we have the following family of exact sequences
$$
0\to \adic ^{n-c}P_n\to \adic ^n
M\stackrel{\varphi_n}{\longrightarrow}\adic ^n N\to 0,
$$
where $P_n\subseteq \ker\varphi$. Then, for every pair $(n,d)$ with
$n\leqslant d$, we also have the following family of exact sequences
$$
0\to \adic ^{n-c}P_n^d \to \adic ^nM/\adic ^d
M\stackrel{\varphi_{n}^d}{\longrightarrow}\adic ^n N/\adic ^d N\to
0,
$$
where $\varphi_{n}^d$ is the induced by $\varphi$ map and
$P_n^d\subseteq \ker \varphi_{n}^d$.

Moreover, if for any two pairs $(n,d)$ and $(n',d')$ such that
$n\geqslant n'$ and $d\geqslant d'$, $P_n\subseteq P_{n'}$, the
corresponding quotient map induces a well-defined map $P_n^d\to
P_{n'}^{d'}$.
\end{lemma}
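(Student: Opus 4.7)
The plan is to deduce the conclusion from a standard 3-by-3/snake argument applied to the two short exact sequences supplied by the hypothesis at levels $n$ and $d$. Fix $c\leqslant n\leqslant d$, and form the commutative diagram
$$
\xymatrix{
{0}\ar[r]&{\adic^{d-c}P_d}\ar[r]\ar@{^{(}->}[d]&{\adic^d M}\ar[r]\ar@{^{(}->}[d]&{\adic^d N}\ar[r]\ar@{^{(}->}[d]&{0}\\
{0}\ar[r]&{\adic^{n-c}P_n}\ar[r]&{\adic^n M}\ar[r]^{\varphi_n}&{\adic^n N}\ar[r]&{0}
}
$$
whose rows are the given exact sequences and whose columns are inclusions; the leftmost column is well defined because $\adic^{d-c}P_d=\ker\varphi_d\subseteq\ker\varphi_n=\adic^{n-c}P_n$ from the hypothesis at both levels. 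Taking cokernels column by column yields the exact row
$$
0\to \adic^{n-c}P_n/\adic^{d-c}P_d\to \adic^n M/\adic^d M\stackrel{\varphi_n^d}{\longrightarrow}\adic^n N/\adic^d N\to 0.
$$
The one point in this exactness statement that is not purely formal is the identification of $\ker\varphi_n^d$: an element $[m]$ of the kernel satisfies $\varphi(m)\in\adic^d N$, and the surjectivity of $\varphi_d\colon\adic^d M\to\adic^d N$ (supplied by the hypothesis at the deeper level $d$) allows one to correct $m$ by an element of $\adic^d M$ so that the corrected representative lies in $K\cap\adic^n M=\adic^{n-c}P_n$.

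To present the result in the notation of the lemma, set $P_n^d$ to be the image of $P_n$ under the natural map $K\to K/(K\cap\adic^d M)$, viewed inside $M/\adic^d M$ through the embedding $K/(K\cap\adic^d M)\hookrightarrow M/\adic^d M$. Then $P_n^d$ sits inside the kernel of the map $M/\adic^d M\to N/\adic^d N$ induced by $\varphi$, and multiplication by $t^{n-c}$ identifies $\adic^{n-c}P_n^d$ with the subobject $\adic^{n-c}P_n/\adic^{d-c}P_d$ constructed above. In particular $\adic^{n-c}P_n^d$ lies inside $\adic^n M/\adic^d M$ and coincides with $\ker\varphi_n^d$, as required.

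For the functoriality clause, take $n\geqslant n'$, $d\geqslant d'$, and $P_n\subseteq P_{n'}$. The projection $M/\adic^d M\twoheadrightarrow M/\adic^{d'} M$ carries $K\cap\adic^d M$ into $K\cap\adic^{d'} M$, so it induces a map $K/(K\cap\adic^d M)\to K/(K\cap\adic^{d'} M)$, and the inclusion $P_n\subseteq P_{n'}$ of submodules of $K$ then ensures that this map sends $P_n^d$ into $P_{n'}^{d'}$. The whole argument is essentially bookkeeping; the main technical point is the identification of the kernel $\ker\varphi_n^d$, for which the hypothesis at level $d$ is indispensable, and no substantive obstacle arises.
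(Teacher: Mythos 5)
Correct, and essentially the same argument as the paper: there too $P_n^d$ is defined as the image of $P_n$ under the quotient map, and the kernel of $\varphi_n^d$ is identified by correcting a representative $t^n a$ by an element of $\adic^d M$ and using $K\cap \adic^n M=\adic^{n-c}P_n$. Your snake-lemma packaging and your appeal to the level-$d$ sequence for surjectivity are only presentational differences (the paper instead lifts through the surjection $\varphi$ itself, which already gives $\varphi(\adic^d M)=\adic^d N$, so the level-$d$ hypothesis is not quite ``indispensable'' as you claim).
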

\begin{proof}
We define $P_n^d$ as the image of $P_n$ under the corresponding
quotient map. Now, consider an element $p$ of the kernel of
${\varphi}_{n}^d$. By definition, $p=t^n a$, where $a\in M$. Then
${\varphi}^d(t^n a)=0$ in $N/\adic ^dN$. Hence
$$
\varphi(t^n a) = t^d h = t^d\varphi(b).
$$
Therefore, $\varphi(t^n a - t^d b) = 0$. By the hypothesis, the
element $k = t^n (a - t^{d-n} b)$ can be presented as $t^{n-c}r$,
where $r\in P_n$. So, we have $p=t^{n-c}r$ in $M/\adic ^d M$.
\end{proof}

\begin{lemma}\label{lemma_thg_lift}
Let $A$ be a ring, $\adic=(t)\subseteq A$ a principal ideal, and we
are given the following exact sequence of $A$-modules
$$
0\to R\to M\stackrel{\varphi}{\longrightarrow} N\to 0
$$
Let $r$ and $c$ be integral numbers such that
\begin{enumerate}
\item $\adic ^rM\cap \annihil_M(t)=0$;
\item $\adic ^n M \cap R = \adic^{n-c}(\adic ^c M\cap R)$ for all $n\geqslant
c$.
\end{enumerate}
Let $k$, $n$, and $h$ be a triple of integral numbers satisfying the
conditions
$$
k>n, \;\;\; n>c+h, \;\;\; n>r+h.
$$
On the following diagram, $\varphi_{n,k}$ is the map induced by
$\varphi$ and $R_{n,k}$ is its kernel
$$
0\to R_{n,k} \to \adic^n M/\adic^k M
\stackrel{\varphi_{n,k}}{\longrightarrow}\adic^n N/\adic^k N \to 0
$$
Suppose also that the following diagram with solid arrows only is
given
$$
\xymatrix{
    {}&{R_{n,k}}\ar[r]^{\nu}&{R_{n-h,k}}\ar[r]^{\mu}&{R_{n-h,k-h}}\\
    {A^d}\ar[r]^{Q}&{A^p}\ar[u]^{g}\ar@{-->}[rru]_{t^h g'}&{}&{}\\
}
$$
where $\nu$ and $\mu$ are the natural maps, and we have $gQ=0$. Then
there exists a homomorphism
$$
g'\colon A^p\to R_{n-h,k-h}
$$
such that $g'Q=0$ and $\mu\circ\nu\circ g = t^h g'$.
\end{lemma}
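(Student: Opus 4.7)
The plan is to build $g'$ directly on the standard basis of $A^p$, verify the factorization $\mu\nu g=t^hg'$ essentially by construction, and then check $g'Q=0$ as a separate (and more delicate) step.

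First, fix the standard basis $e_1,\dots,e_p$ of $A^p$ and, for each $i$, lift $g(e_i)\in R_{n,k}$ to some $m_i\in\adic^nM$ with $\varphi(m_i)\in\adic^kN$. Writing $\varphi(m_i)=t^ku_i$ and pulling $u_i$ back to $\tilde m_i\in M$, we obtain $r_i:=m_i-t^k\tilde m_i\in R\cap\adic^nM$ (using $k>n$). Applying hypothesis~(2) yields $r_i=t^{n-c}s_i$ with $s_i\in\adic^cM\cap R$. Define $g'(e_i)$ to be the class of $t^{n-c-h}s_i$ in $\adic^{n-h}M/\adic^{k-h}M$, which makes sense because $n-c-h\geq 1$ and $s_i\in\adic^cM$, and extend $A$-linearly. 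The class lands in $R_{n-h,k-h}$ because $s_i\in R$. Next, $t^hg'(e_i)$ equals the class of $t^{n-c}s_i=m_i-t^k\tilde m_i$, which agrees with the class of $m_i$ modulo $\adic^{k-h}M$ since $t^k\tilde m_i\in\adic^{k-h}M$; hence $\mu\nu g=t^hg'$ holds on the basis and therefore everywhere by $A$-linearity.

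The main obstacle is showing $g'Q=0$. For $q\in A^d$ write $Q(q)=\sum_j a_je_j$. The assumption $gQ=0$ gives $\sum_j a_jm_j\in\adic^kM$, and substituting $m_j=t^k\tilde m_j+t^{n-c}s_j$ produces $t^{n-c}s\in\adic^kM$ for $s:=\sum_j a_js_j\in\adic^cM\cap R$. Since $s\in R$, we get $t^{n-c}s\in\adic^kM\cap R$, and hypothesis~(2) yields $s'\in\adic^cM\cap R$ with $t^{n-c}s=t^{k-c}s'$. Rewriting this as $t^h\bigl(t^{n-c-h}s-t^{k-c-h}s'\bigr)=0$, I observe that both summands inside the parentheses lie in $\adic^{n-h}M$ (the second uses $k>n$), and the inequality $n>r+h$ forces $\adic^{n-h}M\subseteq\adic^rM$. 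Hypothesis~(1), iterated in the obvious way, tells us that multiplication by $t^h$ is injective on $\adic^rM$, so $t^{n-c-h}s=t^{k-c-h}s'\in\adic^{k-h}M$. This is precisely the statement that $g'Q(q)=0$ in $R_{n-h,k-h}$.

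In summary, the existence of $g'$ with $t^hg'=\mu\nu g$ uses only hypothesis~(2); the real content of the lemma is the vanishing $g'Q=0$, and it is at this step that both hypotheses, together with the three numerical inequalities $k>n$, $n>c+h$, and $n>r+h$, must enter simultaneously, with (1) used to cancel $t^h$ inside $\adic^rM$ and (2) used twice to replace the intersections $\adic^{*}M\cap R$ by ``divisible'' submodules $t^{*-c}(\adic^cM\cap R)$.
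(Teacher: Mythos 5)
Your proof is correct and follows essentially the same route as the paper: you divide $\mu\circ\nu\circ g$ by $t^h$ using hypothesis (2) (the paper does this via Lemma~\ref{lemma:induced_struct}, you do it by hand on the basis through the lifts $m_i$ and the correction $t^k\tilde m_i$), and you establish $g'Q=0$ by lifting to $M$ and cancelling $t^h$ with hypothesis (1) together with $n>r+h$, exactly as in the paper's proof. The only cosmetic differences are that you inline the content of Lemma~\ref{lemma:induced_struct} and that your second appeal to hypothesis (2) (producing $s'$) is superfluous, since writing $t^{n-c}s=t^km$ with an arbitrary $m\in M$ already suffices for the cancellation step.
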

\begin{proof}
Firstly, by Lemma~\ref{lemma:induced_struct}, $R_{n,k} = \adic
^{n-c}\widetilde{R}_{n,k}$, where $\widetilde{R}_{n,k}\subseteq \ker
\varphi_{n,k}$. Thus, we have the following diagram
$$
\xymatrix{
    {}&{\adic ^{n-c}\widetilde{R}_{n,k}}\ar[r]^{\nu}&{\adic ^{n-h-c}\widetilde{R}_{n-h,k}}\ar[r]^{\mu}&{\adic ^{n-h-c}\widetilde{R}_{n-h,k-h}}\\
    {A^d}\ar[r]^{Q}&{A^p}\ar[u]^{g}&{}&{}\\
}
$$
Hence, we have an equality $\nu \circ g = t^h \bar f$ for some $\bar
f\colon A^p\to \adic ^{n-h-c}\widetilde{R}_{n-h,c}$. Its lifting
$A^p\to \adic ^{n-h}M$ will be denoted by $f$.

We will show that $\mu\circ \bar f\circ Q = 0$. Let $q_1,\ldots,
q_d$ be the generators of the image of $Q$. Then, we know that
$$
t^h \bar f(q_i)=0\; \mbox{ in } \: \adic ^{n-h}M/\adic ^{k}M.
$$
Hence, for some $m_i\in M$, we have
$$
t^h f(q_i) = t^k m_i\;\mbox{ in } \; M.
$$
Thus,
$$
t^h(f(q_i)-t^{k-h}m_i)=0\; \mbox{ in } \; M.
$$
By the definition of $f$, the elements $f(q_i)-t^{k-h}m_i$ belong to
$\adic ^{n-h}M\subseteq \adic ^rM$. By the definition of $r$, we
have
$$
f(q_i)=t^{k-h}m_i\; \mbox{ in } \; M.
$$
And thus
$$
\mu\circ\bar f(q_i)= 0 \; \mbox{ in } \; \adic ^{n-h}M/\adic
^{k-h}M.
$$
\end{proof}

\begin{corollary}\label{corollary_thg_lift}

Let $A$ be a ring, $\adic=(t)\subseteq A$ a principal ideal, and $M$
is an $A$-module. Let $r$, $n$, $k$, and $h$ be natural numbers such
that
\begin{enumerate}
\item $\adic ^rM\cap \annihil_M(t)=0$;
\item $k>n$ and $n> r+h$.
\end{enumerate}
Assume also that  the following diagram with solid arrows only is
given
$$
\xymatrix{
    {}&{\adic ^nM/\adic ^kM}\ar[r]^{\nu}&{\adic ^{n-h}M/\adic ^{k}M}\ar[r]^{\mu}&{\adic ^{n-h}M/\adic ^{k-h}M}\\
    {A^d}\ar[r]^{Q}&{A^p}\ar[u]^{g}\ar@{-->}[rru]_{t^h g'}&{}&{}\\
}
$$
where $\nu$ and $\mu$ are the natural maps, and we have $gQ=0$. Then
there exists a homomorphism
$$
g'\colon A^p\to \adic ^{n-h} M/\adic ^{k-h} M
$$
such that $g'Q=0$ and $\mu\circ\nu\circ g = t^h g'$.
\end{corollary}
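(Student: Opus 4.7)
The plan is to deduce Corollary~\ref{corollary_thg_lift} from Lemma~\ref{lemma_thg_lift} by specializing the exact sequence of the lemma to the degenerate case $N = 0$. Concretely, I take the exact sequence $0 \to M \to M \to 0 \to 0$, so that $R = M$ and $\varphi = 0$. With this choice, the induced map $\varphi_{n,k}\colon \adic^n M / \adic^k M \to \adic^n N / \adic^k N = 0$ is identically zero, and hence its kernel $R_{n,k}$ is the whole of $\adic^n M / \adic^k M$. This identifies the target appearing in the statement of the lemma with the target appearing in the statement of the corollary, and likewise for the maps $\nu$ and $\mu$.

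Next I check that the numerical and structural hypotheses of the lemma are satisfied. Hypothesis (1), $\adic^r M \cap \annihil_M(t) = 0$, coincides verbatim with hypothesis (1) of the corollary. Hypothesis (2), $\adic^n M \cap R = \adic^{n-c}(\adic^c M \cap R)$ for all $n \geq c$, is trivially satisfied when $R = M$ with the choice $c = 0$, as both sides reduce to $\adic^n M$. With $c = 0$ the numerical conditions $k > n$, $n > c + h$, $n > r + h$ reduce to $k > n$ and $n > r + h$, which are precisely the hypotheses of the corollary (the inequality $n > r + h$ already forces $n > h = c + h$).

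Finally, the commutative diagram of the corollary matches the one in the lemma once one substitutes $R_{n,k} = \adic^n M / \adic^k M$ and similarly for $R_{n-h,k}$ and $R_{n-h,k-h}$; the hypothesis $gQ = 0$ transfers without change. Applying Lemma~\ref{lemma_thg_lift} then produces the desired homomorphism $g' \colon A^p \to \adic^{n-h} M / \adic^{k-h} M$ with $g' Q = 0$ and $\mu \circ \nu \circ g = t^h g'$. Since the argument is a pure specialization, there is no genuine obstacle here; the only substantive point is to observe that the lemma was formulated in enough generality to include the absolute case where no quotient is present, so that one recovers the statement about a single module filtration by the principal ideal.
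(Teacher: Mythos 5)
Your proposal is correct and is exactly the paper's argument: the paper also proves the corollary by taking $N=0$ (hence $R=M$, $\varphi=0$) in Lemma~\ref{lemma_thg_lift} and observing that $c=0$ then works, so the numerical hypotheses reduce to those of the corollary. Your verification of the hypotheses (in particular that condition (2) of the lemma becomes trivial and that $n>r+h$ already gives $n>c+h$) fills in the same routine details the paper leaves implicit.
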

\begin{proof}
We should take $N = 0$ in Lemma~\ref{lemma_thg_lift}. Just note
that, in this case, $c=0$.
\end{proof}

\begin{lemma}\label{lemma_duo_lift}

Let $A$ be a ring, $\adic=(t)\subseteq A$ a principal ideal, we are
given the following exact sequence of $A$-modules
$$
0\to R\to M\stackrel{\varphi}{\longrightarrow} N\to 0
$$
and $P$ is a finite $A$-module such that $\adic^h\subseteq H_P$. Let
$r$ and $c$ be integral numbers such that
\begin{enumerate}
\item $\adic ^rM\cap \annihil_M(t)=0$;
\item $\adic ^n M \cap R = \adic^{n-c}(\adic ^c M\cap R)$ for all $n\geqslant
c$.
\end{enumerate}
Let $k$ and $n$ be a pair of integers such that
$$
k>n, \;\;\; n>c+h, \;\;\; n>r+h.
$$
On the following diagram, $\varphi_{n,k}$ is the map induced by
$\varphi$ and $R_{n,k}$ is its kernel
$$
0\to R_{n,k} \to \adic^n M/\adic^k M
\stackrel{\varphi_{n,k}}{\longrightarrow}\adic^n N/\adic^k N \to 0
$$
Then, on the following diagram, for every homomorphism $\psi$, there
exists a homomorphism $\psi'$ such that the diagram is commutative
$$
\xymatrix@C=60pt{
    {\adic ^{n-h}M/\adic ^{k-h}M}\ar[r]^{\varphi_{n-h,k-h}}&{\adic ^{n-h}N/\adic ^{k-h}N}\\
    {P}\ar[r]^{\psi}\ar@{-->}[u]^{\psi'}&{\adic ^n N/\adic ^k N}\ar[u]\\
}
$$
\end{lemma}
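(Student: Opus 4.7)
The plan is to invoke Lemma~\ref{lemma_thg_lift} after replacing $P$ by a free presentation, and then to use the hypothesis $\adic^h \subseteq H_P$ to descend the resulting lift back to $P$.

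First, I choose a partial free resolution $A^e \xrightarrow{Q_2} A^d \xrightarrow{Q} A^p \xrightarrow{\pi} P \to 0$, which exists since $A$ is Noetherian and $P$ is finite. Using projectivity of $A^p$ together with the surjectivity of $\varphi_{n,k}$, I lift $\psi\pi\colon A^p\to \adic^n N/\adic^k N$ to a homomorphism $f\colon A^p\to \adic^n M/\adic^k M$. Since $\varphi_{n,k}(fQ)=\psi\pi Q=0$, the map $fQ$ actually lands in $R_{n,k}$, and by exactness of the resolution $(fQ)Q_2=f(QQ_2)=0$.

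Next I apply Lemma~\ref{lemma_thg_lift} to the map $g=fQ\colon A^d\to R_{n,k}$, with the role of ``$A^d \xrightarrow{Q} A^p$'' in that lemma played by $Q_2\colon A^e\to A^d$; the numerical conditions on $r,c,n,k,h$ transfer verbatim. This yields $g'\colon A^d\to R_{n-h,k-h}$ with $g'Q_2=0$ and $\mu\nu(fQ)=t^h g'$. Since $\Im Q_2=\ker Q$, the map $g'$ descends through the surjection $\bar Q\colon A^d\twoheadrightarrow \Im Q$ to $\bar g'\colon \Im Q\to R_{n-h,k-h}$.

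The key use of $\adic^h\subseteq H_P$ is the following. From the short exact sequence $0\to\Im Q\to A^p\to P\to 0$, the obstruction to extending a homomorphism $\Im Q\to R_{n-h,k-h}$ along $\Im Q\hookrightarrow A^p$ lies in $\ext^1_A(P,R_{n-h,k-h})$, which is annihilated by $\adic^h$ by the definition of $H_P$. Hence $t^h\bar g'$ extends to some $\tilde g'\colon A^p\to R_{n-h,k-h}$, and then $\tilde g' Q=t^h\bar g'\bar Q=t^h g'=\mu\nu fQ$. Consequently $(\mu\nu f-\tilde g')Q=0$, so $\mu\nu f-\tilde g'$ factors through $\pi$ as $\psi'\circ\pi$ for a unique $\psi'\colon P\to\adic^{n-h}M/\adic^{k-h}M$. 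Since $\tilde g'$ takes values in $R_{n-h,k-h}=\ker\varphi_{n-h,k-h}$, the equality $\varphi_{n,k}f=\psi\pi$ gives $\varphi_{n-h,k-h}\psi'\pi$ equal to the image of $\psi\pi$ under the natural map $\adic^n N/\adic^k N \to \adic^{n-h}N/\adic^{k-h}N$; cancelling the surjection $\pi$ yields the required commutativity.

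The main technical point is to apply Lemma~\ref{lemma_thg_lift} not to $\psi\pi$ itself but to $fQ$, using the identity $QQ_2=0$ from the resolution of $P$ in place of the more obvious identity $\varphi_{n,k}fQ=0$. Once this substitution is in place, combining Lemma~\ref{lemma_thg_lift} (which delivers a lift modulo $t^h$) with the defining property of $H_P$ (which allows us to absorb the factor $t^h$ by extending from $\Im Q$ to $A^p$) leaves only a routine diagram chase.
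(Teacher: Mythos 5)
Your proposal is correct and follows essentially the same route as the paper's proof: lift $\psi$ through a free presentation, apply Lemma~\ref{lemma_thg_lift} to the induced map into $R_{n,k}$ (with the next syzygy map supplying the relation $gQ=0$), and use $\adic^h\subseteq H_P$ to kill the $\ext^1_A(P,-)$-obstruction so that the resulting map extends from the syzygy module to the free cover, after which the difference descends to the required $\psi'$. The only differences are the labelling of the resolution and that you spell out explicitly the Ext-obstruction step that the paper states tersely.
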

\begin{proof}
We take an arbitrary finite resolution of $P$ of the form
$$
\ldots\to A^d\stackrel{Q}{\longrightarrow} A^p \to A^q \to P \to 0
$$
Then, the homomorphism $\psi$ extends to a homomorphism of the
resolution as follows
$$
\xymatrix{
    {0}\ar[r]&{R_{n-h,k-h}}\ar[r]^-{i}&{\adic ^{n-h}M/\adic ^{k-h}M}\ar[r]&{\adic ^{n-h}N/\adic ^{k-h}N}\ar[r]&{0}\\
    {0}\ar[r]&{R_{n-h,k}}\ar[r]\ar[u]^{\mu}&{\adic ^{n-h}M/\adic ^{k}M}\ar[r]\ar[u]^{\bar\mu}&{\adic ^{n-h}N/\adic ^{k}N}\ar[r]\ar[u]&{0}\\
    {0}\ar[r]&{R_{n,k}}\ar[r]\ar[u]^{\nu}&{\adic ^n M/\adic ^k M}\ar[r]\ar[u]^{\bar \nu}&{\adic ^n M/\adic ^k N}\ar[r]\ar[u]&{0}\\
    {A^d}\ar[r]^{Q}&{A^p}\ar[r]\ar@{-->}[u]^{g}\ar@/^30pt/@{..>}^{t^hg'}[uuu]^{}&{A^q}\ar[r]\ar@{-->}[u]^{f}\ar@/^6pt/@{..>}[uuul]^{\bar g}&{P}\ar[r]\ar[u]^{\psi}\ar@{..>}[uuul]^{\psi'}&{0}\\
}
$$
On this diagram, arrows $\mu$, $\nu$, $\bar \mu$, and $\bar \nu$ are
the natural maps. Now, we will produce the dotted arrows.

By Lemma~\ref{lemma_thg_lift}, there is a homomorphism $g'\colon
A^p\to R_{n-h,k-h}$ such that $g'\circ Q = 0$ and $\mu\circ\nu\circ
g = t^h g'$. Since $t^h\in H_P$, the homomorphism $\mu\circ\nu\circ
g$ lifts to a homomorphism $\bar g\colon A^p\to R_{n-h,k-h}$. Now,
the homomorphism
$$
\bar\mu\circ\bar\nu\circ f - i\circ \bar g\colon A^p \to \adic
^{n-h}M/\adic ^{k-h}M
$$
induces the required one $\psi'\colon P\to \adic ^{n-h} M/ \adic
^{k-h} M$.
\end{proof}

\begin{lemma}\label{lemma_step_lift}
Let $A$ be a ring, $\adic=(t)\subseteq A$ a principal ideal, $M$ is
an $A$-module, and $P$ is a finite $A$-module such that
$\adic^h\subseteq H_P$. Let $r$, $n$,  and $k$ be natural numbers
such that
\begin{enumerate}
\item $\adic ^rM\cap \annihil_M(t)=0$;
\item $k>n+h$ and $n> r+h$.
\end{enumerate}
Then, on the following diagram, for every homomorphism $\psi$, there
exists a homomorphism $\psi'$ such that the diagram is commutative
$$
\xymatrix{
    {M/\adic^{k-h}M}\ar[r]&{M/\adic^{n-h}M}\\
    {P}\ar[r]^{\psi}\ar@{-->}[u]^{\psi'}&{M/\adic^{n}M}\ar[u]
}
$$
\end{lemma}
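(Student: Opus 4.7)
My plan is to construct $\psi'$ by following the module-theoretic pattern of Lemma~\ref{lemma_duo_lift}, replacing the filtration-matching arguments with the observation that multiplication by $t^h$ furnishes an isomorphism between two consecutive graded pieces of the $\adic$-adic filtration.

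I would begin by choosing a finite free presentation $A^d\stackrel{Q}{\to}A^p\stackrel{\pi}{\to} P\to 0$ with $K:=\ker\pi=\operatorname{im}Q$. Lift $\psi$ trivially to $\tilde\psi:=\psi\pi\colon A^p\to M/\adic^n M$ and then, using the projectivity of $A^p$, to some $f\colon A^p\to M/\adic^k M$ with $f\equiv \tilde\psi\pmod{\adic^n M}$; the restriction $\beta:=f|_K$ then takes values in $\adic^n M/\adic^k M$ because $\tilde\psi|_K=0$.

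The key observation is that multiplication by $t^h$ induces an isomorphism
$$
\mu_h\colon \adic^{n-h}M/\adic^{k-h}M\xrightarrow{\sim} \adic^n M/\adic^k M.
$$
Surjectivity is immediate from $t^h\adic^{n-h}M=\adic^n M$ and $t^h\adic^{k-h}M=\adic^k M$. For injectivity, I would first show by induction on $h$ (starting from the base case $\adic^r M\cap \annihil_M(t)=0$) that $\adic^r M\cap \annihil_M(t^h)=0$; injectivity of $\mu_h$ then follows from $n-h>r$. Writing $\iota\colon \adic^n M/\adic^k M\to \adic^{n-h}M/\adic^{k-h}M$ for the natural map (inclusion followed by quotient), a direct check shows that $\iota\circ\mu_h$ equals multiplication by $t^h$ on $\adic^{n-h}M/\adic^{k-h}M$. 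Consequently $\iota\circ \beta=t^h\beta'$, where $\beta':=\mu_h^{-1}\circ \beta\colon K\to \adic^{n-h}M/\adic^{k-h}M$, so the extension class
$$
[\iota\beta]=t^h[\beta']\in \ext^1_A(P,\adic^{n-h}M/\adic^{k-h}M)
$$
vanishes by the hypothesis $\adic^h\subseteq H_P$, and $\iota\beta$ extends to a homomorphism $\tilde g\colon A^p\to \adic^{n-h}M/\adic^{k-h}M$.

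Finally, I would set $\tilde\psi':=\bar f-j\tilde g\colon A^p\to M/\adic^{k-h}M$, where $\bar f$ is $f$ reduced modulo $\adic^{k-h}M$ and $j$ is the inclusion $\adic^{n-h}M/\adic^{k-h}M\hookrightarrow M/\adic^{k-h}M$. By construction $\tilde\psi'|_K=0$, so $\tilde\psi'$ descends to the required $\psi'\colon P\to M/\adic^{k-h}M$; and since $j\tilde g$ vanishes after further reduction modulo $\adic^{n-h}M$, the diagram in the lemma commutes. The main obstacle is verifying that $\mu_h$ is bijective: the hypotheses $\adic^r M\cap\annihil_M(t)=0$ and $n>r+h$ are precisely what is needed for this, and the hypothesis $\adic^h\subseteq H_P$ then kills the relevant extension class.
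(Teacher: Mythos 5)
Your argument is correct and follows essentially the same route as the paper: lift $\psi$ along a free presentation, divide the resulting map on syzygies by $t^h$ using $\adic^r M\cap\annihil_M(t)=0$ together with $n>r+h$, kill the resulting class in $\ext^1_A(P,-)$ via $\adic^h\subseteq H_P$, and correct the lift. The only difference is packaging: where the paper invokes Corollary~\ref{corollary_thg_lift} (and hence must check $g'Q=0$ on a free cover of the syzygies), you observe directly that multiplication by $t^h$ is an isomorphism $\adic^{n-h}M/\adic^{k-h}M\to\adic^{n}M/\adic^{k}M$ and work with $K=\ker\pi$ itself, an equivalent but slightly cleaner formulation of the same step.
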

\begin{proof}
We take an arbitrary free resolution of $P$ as follows
$$
\ldots\to A^d\stackrel{Q}{\longrightarrow}A^p\to A^q\to P\to 0
$$
Then the homomorphism $\psi$ lifts to a homomorphism of the
resolution as follows
$$
\xymatrix{
    {0}\ar[r]&{\adic ^{n-h}M/\adic ^{k-h}M}\ar[r]^-{i}&{M/\adic ^{k-h}M}\ar[r]&{M/\adic ^{n-h}M}\ar[r]&{0}\\
    {0}\ar[r]&{\adic ^nM/\adic ^{k}M}\ar[r]\ar[u]^{\nu}&{M/\adic ^{k}M}\ar[r]\ar[u]^{\bar\nu}&{M/\adic ^{n}M}\ar[r]\ar[u]&{0}\\
    {A^d}\ar[r]^{Q}&{A^p}\ar[r]\ar[u]^{g}\ar@/^40pt/@{..>}[uu]^(0.70){t^hg'}&{A^q}\ar[r]\ar[u]^{f}\ar@/^6pt/@{..>}[luu]^(0.40){\varphi}&{P}\ar[r]\ar[u]^{\psi}\ar@{..>}[uul]^(0.40){\psi'}&{0}\\
}
$$
On this diagram, $\nu$ and $\bar \nu$ are the natural maps. Now, We
will produce the dotted arrows.

By Corollary~\ref{corollary_thg_lift}, there is a homomorphism
$$
g'\colon A^p\to \adic ^{n-h}M/\adic ^{k-h}M
$$
such that $\nu\circ g = t^h g'$ and $g'\circ Q = 0$. Since $t^h\in
H_P$, the homomorphism $\nu\circ g$ lifts to a homomorphism
$\varphi\colon A^q\to \adic ^{n-h}M/\adic ^{k-h}M$. Then the
homomorphism
$$
\bar\nu\circ f - i \circ \varphi\colon A^p\to M/\adic ^{k-h}M
$$
induces the required lifting $\psi'\colon P\to M/\adic ^{k-h}M$.
\end{proof}

\begin{theorem}\label{theorem:duo_mod_lift}

Let $A$ be a ring, $\adic=(t)\subseteq A$ a principal ideal, $M$ is
an $\adic$-adically complete $A$-module, and $P$ is a finite
$A$-module such that $\adic^h\subseteq H_P$. Let $r$ and $n$ be
natural numbers such that
\begin{enumerate}
\item $\adic ^rM\cap \annihil_M(t)=0$;
\item  $n > \max(r+h, 2h)$.
\end{enumerate}
Then, on the following diagram, for every homomorphism $\psi$, there
exists a homomorphism $\psi'$ such that the diagram is commutative
$$
\xymatrix{
    {M}\ar[r]&{M/\adic^{n-h}M}\\
    {P}\ar[r]^{\psi}\ar@{-->}[u]^{\psi'}&{M/\adic^n M}\ar[u]
}
$$
\end{theorem}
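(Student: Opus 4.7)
The plan is to iterate Lemma~\ref{lemma_step_lift} to build successively better approximations to $\psi'$, and then pass to a limit using the $\adic$-adic completeness of $M$.

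Set $\psi_0:=\psi$. Inductively, suppose a homomorphism $\psi_i\colon P\to M/\adic^{n+i}M$ has been constructed. I would apply Lemma~\ref{lemma_step_lift} with its internal parameters $(n,k)$ replaced by $(n+i,\,n+i+h+1)$; the hypotheses are satisfied because $\adic^r M\cap\annihil_M(t)=0$ and $\adic^h\subseteq H_P$ are given, $n+i+h+1>(n+i)+h$ is automatic, and $n+i>r+h$ follows from $n>r+h$. The conclusion yields a homomorphism $\psi_{i+1}\colon P\to M/\adic^{n+i+1}M$ whose reduction to $M/\adic^{n+i-h}M$ coincides with that of $\psi_i$.

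Next, I would pass to the limit. For each fixed $j\ge n-h$, once $i\ge j-n+h$ the reductions $\psi_i\bmod\adic^j M$ stabilize, because consecutive members of the sequence differ by elements of $\adic^{n+i-h}M$, which vanishes modulo $\adic^j M$ as soon as $n+i-h\ge j$. Denote the stable value by $\tilde\psi_j\colon P\to M/\adic^j M$. The family $\{\tilde\psi_j\}_{j\ge n-h}$ is compatible under the quotient maps $M/\adic^{j+1}M\to M/\adic^j M$, and because $M$ is $\adic$-adically complete it assembles into a homomorphism $\psi'\colon P\to \varprojlim_j M/\adic^j M=M$. Specialising $j=n-h$ gives $\psi'\equiv\psi_0\equiv\psi\pmod{\adic^{n-h}M}$, which is exactly the commutativity requested in the diagram.

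The main subtlety is that Lemma~\ref{lemma_step_lift} provides only a weak form of compatibility: $\psi_{i+1}$ is forced to agree with $\psi_i$ only modulo $\adic^{n+i-h}M$, not modulo the full $\adic^{n+i}M$. What makes the limit construction nonetheless go through is that $h$ is fixed while $n+i$ grows unboundedly, so the defect tends to zero $\adic$-adically, allowing the inverse-limit identification $\hom(P,M)=\varprojlim_j\hom(P,M/\adic^j M)$ to absorb the approximation. The condition $n>2h$ in the hypothesis ensures in addition that $n-h>h\ge 0$, so the target quotient $M/\adic^{n-h}M$ in the diagram is genuinely nontrivial and the initial step $i=0$ already provides useful information.
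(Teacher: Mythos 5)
Your proof is correct and takes essentially the same route as the paper: both arguments iterate Lemma~\ref{lemma_step_lift} to produce a sequence of liftings agreeing modulo ever higher powers of $\adic$, and then assemble the limit map using the $\adic$-adic completeness of $M$. The only difference is the choice of parameters --- the paper picks $k$ so that the modulus jumps from $n$ to $2(n-h)$ (which is where the hypothesis $n>2h$ is actually used, rather than to make the target nontrivial as you suggest), while you increment the modulus by one at each step, which works equally well.
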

\begin{proof}
By Lemma~\ref{lemma_step_lift}, we see that, for an arbitrary
$n>\max(r+h,2h)$ and every homomorphism $\psi$ as above, there is a
homomorphism
$$
\psi'\colon P\to M/\adic ^{2(n-h)}M
$$
such that the following diagram is commutative
$$
\xymatrix{
    {M/\adic^{2(n-h)}M}\ar[r]&{M/\adic^{n-h}M}\\
    {P}\ar[r]^-{\psi}\ar[u]^-{\psi'}&{M/\adic^n M}\ar[u]
}
$$
Then, since $M$ is complete and $2(n-h)>n$, we get the result by the
induction on $n$.

\end{proof}

\subsection{Lifting of ring homomorphisms}

\begin{theorem}\label{theorem:common_alg_lifting}
Let $A$ be a ring, $\adic=(t)\subseteq A$ a principal ideal such
that $A$ is $\adic$-adically complete. Let $\pi \colon B\to \bar B$
be a surjective $A$-homomorphism of formally finitely generated
$A$-algebras, $C$ and $\bar C$ are $A$-algebras such that
\begin{enumerate}
\item There is an exact sequence of $A$-algebras
$$
0\to I\to C\stackrel{\varphi}{\longrightarrow} {\bar C}\to 0;
$$
\item $C$ is $\adic C$-adically complete;
\end{enumerate}
Suppose that we are given natural numbers $r$, $h$, $c$, and $n$
such that
\begin{enumerate}
\item $\adic ^r C\cap \annihil_C(t)=0$ and $\adic ^{ r}{\bar C}\cap \annihil_{\bar C}(t)=0$;
\item $\adic ^k C\cap I = \adic ^{k-c}(\adic ^c C\cap I)$ for all $k\geqslant c$;
\item $\adic ^{h}\subseteq \bar H_{B/A}\subseteq B$ and $\adic ^{ h}\subseteq \bar H_{\bar B/A}\subseteq \bar B$;
\item $n>\max(r+2h,c+2h,4h)$.
\end{enumerate}
Let we be given the following commutative diagram
$$
\xymatrix{
    {C/\adic ^n C}\ar[r]^{\varphi_n}&{{\bar C}/\adic ^n {\bar C}}\ar[r]&{0}\\
    {B}\ar[r]^{\pi}\ar[u]^{\alpha_n}&{\bar B}\ar[u]^{\beta_n}&{}\\
}
$$
where $\varphi_n$ is induced by $\varphi$.

Then, there exist $A$-homomorphisms $\alpha\colon B\to C$ and
$\beta\colon \bar B\to {\bar C}$ such that the following diagram is
commutative
$$
\xymatrix{
    {B}\ar[dd]_{\alpha_n}\ar[rd]^{\pi}\ar@{-->}[rr]^{\alpha}&{}&{C}\ar[rd]^{\varphi}\ar[dd]|!{[d];[dr]}\hole&{}\\
    {}&{\bar B}\ar[dd]_(.30){\beta_n}\ar@{-->}[rr]^(.30){\beta}&{}&{{\bar C}}\ar[dd]\\
    {C/\adic ^n C}\ar[dr]^{\varphi_n}\ar[rr]|!{[r];[rd]}\hole&{}&{C/\adic ^{n-2h}C}\ar[rd]^{\varphi_{n-2h}}&{}\\
    {}&{{\bar C}/\adic ^n {\bar C}}\ar[rr]&{}&{{\bar C}/\adic ^{n-2h}{\bar C}}\\
}
$$
\end{theorem}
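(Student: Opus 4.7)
The plan is to reduce the pair-lifting problem to two single-algebra stages. First, apply Elkik's Theorem~1 to produce a candidate $\alpha\colon B\to C$ lifting $\alpha_n$ up to $\adic^{n-h}$. Second, use the module-lifting machinery of Section~3.1 to correct $\alpha$ so that $\varphi\alpha$ vanishes on $\ker\pi$ exactly; once this holds, $\beta\colon \bar B\to\bar C$ is forced to exist as the unique factorization $\varphi\alpha=\beta\pi$, and the required congruences $\alpha\equiv\alpha_n$, $\beta\equiv\beta_n$ modulo $\adic^{n-2h}$ follow from tracking the two $\adic^h$-losses, one per stage. This is why $n$ must exceed roughly $2h$ plus the other parameters.

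Concretely, in the first stage the hypotheses $\adic^h\subseteq \bar H_{B/A}$, $\adic^rC\cap\annihil_C(t)=0$, $n>r+h$, and completeness of $C$ are exactly those of Elkik's Theorem~1, giving $\alpha\colon B\to C$ with $\alpha\equiv\alpha_n\pmod{\adic^{n-h}}$. Set $\gamma=\varphi\alpha$; by commutativity of the given square, $\gamma\equiv \beta_n\pi\pmod{\adic^{n-h}\bar C}$, so the restriction of $\gamma$ to $K=\ker\pi$ is an error taking values in $\adic^{n-h}\bar C$. In the second stage, write $B=A\{x\}/J$; Proposition~\ref{prop:H_Omega} promotes $\adic^h\subseteq \bar H_{B/A}$ to $\adic^h\subseteq H_{\Omega^s_{B/A}}$, and an analogous argument via Corollary~\ref{cor:fsmooth_cover} yields the same bound for modules built from $J/J^2$. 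This is what makes Lemma~\ref{lemma_duo_lift} applicable with $M=C$, $N=\bar C$, and $R=I$: the Artin-Rees hypothesis $\adic^kC\cap I=\adic^{k-c}(\adic^cC\cap I)$ is exactly the condition in the lemma, and it lifts the module-theoretic error from $\bar C$ back into $C$, producing a derivation-style adjustment of $\alpha$. Iterating this correction in the complete ring $C$ in the manner of the proof of Theorem~\ref{theorem:duo_mod_lift} yields a revised $\alpha$ with the same $\adic^{n-2h}$-reduction and with $\varphi\alpha(K)=0$ on the nose; defining $\beta$ by descent then finishes the construction.

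The main obstacle is this second stage: upgrading a module-level correction into a genuine adjustment of the $A$-algebra homomorphism $\alpha$. The difficulty is that the difference of two algebra liftings is only approximately an $A$-derivation (the error being quadratic in the size of the perturbation), so one cannot simply add the module-theoretic correction to $\alpha$ in a single shot. Instead, one works iteratively, at each stage truncating modulo a higher power of $\adic$, invoking Lemma~\ref{lemma_duo_lift} to cancel the ring-theoretic error in $\alpha$ and the $\varphi$-compatibility error simultaneously, and using completeness of $C$ to pass to the limit. The three regime conditions hidden in $n>\max(r+2h,c+2h,4h)$ reflect respectively the torsion hypothesis on $C$ and $\bar C$, the Artin-Rees conductor of $I$ in $C$, and the number of iterative steps needed for the module-level and algebra-level corrections to converge in tandem.
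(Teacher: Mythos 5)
There is a genuine gap, and it is visible already from the bookkeeping of hypotheses: your argument never uses $\adic^{h}\subseteq \bar H_{\bar B/A}$, while the conclusion includes a lift $\beta$ of $\beta_{n-2h}$, so formal smoothness of $\bar B$ must enter somewhere. Take the test case $I=0$, $\varphi=\mathrm{id}_C$, $\bar C=C$: then your ``lift the error from $\bar C$ back into $C$'' step is vacuous, and arranging $\varphi\alpha(K)=0$ on the nose means precisely modifying $\alpha$ so that it factors through $\bar B$, i.e.\ lifting $\beta_n\colon \bar B\to C/\adic^{n}C$ to $\beta\colon\bar B\to C$ --- which is Elkik's lifting theorem for $\bar B$ and is false for a general quotient $\bar B=B/K$ without a smoothness hypothesis on $\bar B$. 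So the second stage, as you describe it, cannot go through.

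Concretely, the flaw is in what the error term is and what Lemma~\ref{lemma_duo_lift} can do with it. Your error is the restriction $\varphi\alpha|_{K}$; modulo the appropriate truncation this is a $B$-module map $K/K^{2}\to \adic^{n-h}\bar C/\adic^{2(n-2h)}\bar C$, not a derivation of $B$, so Proposition~\ref{prop:H_Omega} (which controls $H_{\Omega^s_{B/A}}$) is not the relevant input; to feed it into Lemma~\ref{lemma_duo_lift} you would need an $\adic$-power inside $H_{K/K^{2}}$, which is neither a hypothesis nor established by your appeal to Corollary~\ref{cor:fsmooth_cover}. And even granting such a bound and lifting the values into $C$, you would still have to realize the lifted module map as the restriction to $K$ of an honest adjustment of the algebra homomorphism $\alpha$; this is an extension problem from the conormal module $K/K^{2}$ to $B$ whose obstruction is governed by groups of the type $\exa_A(\bar B,\cdot)$ --- again formal smoothness of $\bar B$ --- and your proposal has no step addressing it. The paper avoids both difficulties by first lifting $\beta_n$ on the $\bar B$-side (Elkik's Lemma~1 applied to $\bar B$, which is exactly where $\adic^{h}\subseteq\bar H_{\bar B/A}$ is used) to $\delta\colon\bar B\to\bar C/\adic^{2(n-h)}\bar C$; then the discrepancy $\varphi_{2(n-h)}\circ\gamma-\delta\circ\pi$ is an honest $A$-derivation of $B$ into the square-zero ideal $\adic^{n-h}\bar C/\adic^{2(n-h)}\bar C$ (exactly, not approximately, because one truncates at level $2(n-h)$), hence a $B$-module map from $\Omega^s_{B/A}$, and only then do Lemma~\ref{lemma_duo_lift} with $P=\Omega^s_{B/A}$, $M=C$, $N=\bar C$, $R=I$ and Proposition~\ref{prop:H_Omega} apply; one corrects $\gamma$ by the lifted derivation and iterates using completeness. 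Your first stage and the final iteration/descent are fine; it is the middle of the second stage that is missing.
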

\begin{proof}
By Lemma~1 of~\cite{Elkik73}, one finds $A$-homomorphisms
$$
\gamma\colon B\to C/\adic ^{2(n-h)}C \;\;\mbox{ and }
\;\;\delta\colon \bar B\to C/\adic ^{2(n-h)}{\bar C}
$$
such that the following diagrams are commutative
$$
\xymatrix{
    {B}\ar@{-->}[r]^-{\gamma}\ar[d]_{\alpha_n}\ar[rd]^{\alpha_{n-h}}&{C/\adic ^{2(n-h)}C}\ar[d]\\
    {C/\adic ^n C}\ar[r]&{C/\adic ^{n-h}C}\\
}\:\:\: \xymatrix{
    {\bar B}\ar@{-->}[r]^-{\delta}\ar[d]_{\beta_n}\ar[rd]^{\beta_{n-h}}&{{\bar C}/\adic ^{2(n-h)}{\bar C}}\ar[d]\\
    {{\bar C}/\adic ^n {\bar C}}\ar[r]&{{\bar C}/\adic ^{n-h}{\bar C}}\\
}
$$
Where, $\alpha_{n-h}$ and $\beta_{n-h}$ are the compositions of
$\alpha_n$ and $\beta_n$ with the corresponding quotient map,
respectively.

We will draw everything on the following diagram
$$
\xymatrix@!0@R=40pt@C=50pt{
    {\adic ^{n-2h}{C}/\adic ^{2(n-2h)}{C}}\ar[rrr]\ar[rrd]^{i}&&&{\adic ^{n-2h}{\bar C}/\adic ^{2(n-2h)}{\bar C}}\ar[rrd]&&&&\\
    &&{{C}/\adic ^{2(n-2h)}{C}}\ar[rrr]^{\varphi_{2(n-2h)}}\ar[rrd]&&&{{\bar C}/\adic ^{2(n-2h)}{\bar C}}\ar[rrd]&&\\
    &&&&{{C}/\adic ^{n-2h}{C}}\ar[rrr]^{\varphi_{n-2h}}&&&{{\bar C}/\adic ^{n-2h}{\bar C}}\\
    {\adic ^{n-h} {C}/\adic ^{2(n-h)}{C}}\ar[rrr]|!{[rr];[rrd]}\hole\ar[uuu]\ar[rrd]&&&{\adic ^{n-h}{\bar C}/\adic ^{2(n-h)}{\bar C}}\ar[rrd]_(0.30){j}|!{[r];[rd]}\hole\ar[uuu]|!{[uul];[uuulll]}\hole|!{[uu];[uur]}\hole&&&&\\
    &&{{C}/\adic ^{2(n-h)}{C}}\ar[rrr]^{\varphi_{2(n-h)}}|!{[rr];[rrd]}\hole\ar[uuu]^{\nu}\ar[rrd]&&&{{\bar C}/\adic ^{2(n-h)}{\bar C}}\ar[rrd]\ar[uuu]^{\bar \nu}|!{[uu];[uur]}\hole&&\\
    &{\Omega^s_{B/A}}\ar@/^30pt/@{..>}[uurr]^{\phi_d}\ar@/_15pt/@{..>}[uuuuul]_{\phi_{d'}}&&&{{C}/\adic ^{n-h} {C}}\ar[rrr]^{\varphi_{n-h}}|!{[r];[rd]}\hole\ar[uuu]&&&{{\bar C}/\adic ^{n-h} {\bar C}}\ar[uuu]\\
    &&{B}\ar[lu]^{d_{B/A}}\ar[rrr]^{\pi}\ar@{-->}[uu]^(.65){\gamma}\ar[rru]^{\alpha_{n-h}}\ar@{..>}[ruuu]_<<<<<<<<<<{d}&&&{\bar B}\ar@{-->}[uu]^(.65){\delta}\ar[rru]^{\beta_{n-h}}&&\\
}
$$
On this diagram $i$, $j$, $\nu$, $\bar \nu$ are the natural maps. We
will produce all dotted arrows step by step. Before this, let us
note that the diagram is commutative only for the solid arrows. The
four lines of this diagram (going from the back facet to the front
facet) are the extensions of rings by modules. Therefore, all
modules on the back facet are $B$-modules.

By construction, the difference of homomorphisms
$$
\varphi_{2(n-h)}\circ\gamma - \delta\circ\pi
$$
factors through $j$, that is, equals $j\circ d$ for some
$$
d\colon B\to \adic ^{n-h}{\bar C}/\adic ^{2(n-h)}{\bar C}
$$
and $d$ is a derivation of $B$ over $A$. The derivation $d$
corresponds to a $B$-homomorphism $\phi_d$. Then, by
Lemma~\ref{lemma_duo_lift}, we find a $B$-homomorphism
$$
\phi_{d'}\colon \Omega^s_{B/A}\to \adic ^{n-2h}C/\adic ^{2(n-2h)}C.
$$
This homomorphism corresponds to the derivation $d'=\phi_{d'}\circ
d_{B/A}$, where
$$
d'\colon B\to \adic ^{n-2h}C/\adic ^{2(n-2h)}C.
$$
Thus, the map
$$
\gamma'=\nu\circ\gamma-i\circ d'\colon B\to C/\adic ^{2(n-2h) }C
$$
is a homomorphism  and $\gamma'$ together with
$$
\bar\nu\circ\delta\colon \bar B\to {\bar C}/\adic ^{2(n-2h)}{\bar C}
$$
gives the required lifting. Now, since $C$ and, hence, ${\bar C}$
are $\adic $-adically complete, the result follows by induction on
$n$.

\end{proof}

\begin{remark}
In the proof of Theorem~\ref{theorem:common_alg_lifting}, we cannot
cite Lemma~1 of~\cite{Elkik73} because they use the ideal $H_J$
instead of $\bar H_{B/A}$. There are two possible solutions of this
issue. The first one is to see that the proof of Lemma~1 is word by
word applicable for the ideal $\bar H_{B/A}$. The second way is to
use Lemma~\ref{lemma:fsmooth_Elkik} saying that, for some natural
$l$, we have $\bar H_{B/A}^l\subseteq H_J$. Then
$\adic^{hl}\subseteq H_J$. However, in this case, we should use $hl$
instead of $h$.
\end{remark}

\begin{theorem}\label{theorem:com_alg_lift_h}
Let $A$ be a ring $\adic=(t)\subseteq A$ a principal ideal. Let
$\phi\colon B\to \bar B$ be a surjective $A$-homomorphism of
finitely generated $A$-algebras, $C$ and $\bar C$ are $A$-algebras
such that
\begin{enumerate}
\item
$B$ and $\bar B$ are smooth over the complement of $V(\adic)$;

\item
There is an exact sequence of $A$-algebras
$$
0\to I\to C\stackrel{\varphi}{\longrightarrow} \bar C\to 0;
$$

\item
The pairs $(C,\adic C)$ and $(\bar C,\adic \bar C)$ are henselian.
\end{enumerate}
Suppose we are given homomorphisms $\varepsilon \colon
\widehat{B}\to \widehat{C}$ and $\bar \varepsilon \colon
\widehat{\bar B}\to \widehat{\bar C}$ such that the following
diagram with the solid arrows only is commutative
$$
\xymatrix@R=10pt@C=10pt{
    &&&&{\widehat{C}}\ar[dll]&&&&&{\widehat{B}}\ar[dll]^{\widehat{\phi}}\ar[lllll]_{\varepsilon}\\
    &&{\widehat{\bar C}}\ar[dll]&&&&&{\widehat{\bar B}}\ar[dll]\ar[lllll]_{\bar\varepsilon}&&\\
    0&&&&&0&&&&\\
    &&&&{C}\ar[dll]\ar[uuu] |!{[uu];[uur]}\hole&&&&&{B}\ar[dll]^{\phi}\ar[uuu]\ar@{-->}[lllll]_{\gamma} |!{[ll];[llu]}\hole\\
    &&{\bar C}\ar[dll]\ar[uuu]&&&&&{\bar B}\ar[dll]\ar[uuu]\ar@{-->}[lllll]_{\bar\gamma}&&\\
    0&&&&&0&&&&\\
}
$$
Then, for every natural number $n$, there exist two homomorphisms
$\gamma\colon B\to C$ and $\bar \gamma \colon \bar B\to \bar C$ such
that the bottom square on the previous diagram is commutative and
the following diagrams are commutative
$$
\xymatrix@!0@C=40pt{
    {}&{\widehat{C}}\ar[rd]&{}\\
    {\widehat{B}}\ar[ru]^{\varepsilon}\ar[rd]_{\widehat{\gamma}}&{}&{\widehat{C}/\adic^{n}\widehat{C}}\\
    {}&{\widehat{C}}\ar[ru]&{}\\
}
 \quad
\xymatrix@!0@C=40pt{
    {}&{\widehat{\bar C}}\ar[rd]&{}\\
    {\widehat{\bar B}}\ar[ru]^{\bar \varepsilon}\ar[rd]_{\widehat{\bar\gamma}}&{}&{\widehat{\bar C}/\adic^{n}\widehat{\bar C}}\\
    {}&{\widehat{\bar C}}\ar[ru]&{}\\
}
$$
\end{theorem}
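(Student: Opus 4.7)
The plan is to mirror the proof of Theorem~\ref{theorem:common_alg_lifting}, replacing the completeness input (Lemma~1 of~\cite{Elkik73}) by the henselian input (Theorem~2 of~\cite{Elkik73}, applied independently to $B$ and $\bar B$). The first move is to reduce to a finite-order problem. Since $(C,\adic C)$ and $(\bar C,\adic\bar C)$ are henselian, we have $C/\adic^N C=\widehat C/\adic^N\widehat C$ and $\bar C/\adic^N\bar C=\widehat{\bar C}/\adic^N\widehat{\bar C}$ for every $N$. By the rig-smoothness of $B$ and $\bar B$, together with Lemma~\ref{lemma:H_Bness}, there is an $h$ with $\adic^h\subseteq \bar H_{\widehat B/\widehat A}\cap\bar H_{\widehat{\bar B}/\widehat A}$. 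Fix $N\ge n+2h$ and set $\alpha_N=\varepsilon\bmod\adic^N\colon B\to C/\adic^N C$ and $\beta_N=\bar\varepsilon\bmod\adic^N\colon \bar B\to \bar C/\adic^N\bar C$. The hypothesis that $\varepsilon$ and $\bar\varepsilon$ cover $\widehat\phi$ guarantees that $\alpha_N,\beta_N$ form a commutative square with $\phi$ and the induced $\varphi_N$, i.e.\ exactly the input configuration for Theorem~\ref{theorem:common_alg_lifting}, but now with $C$ henselian instead of complete.

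The core of the argument is then a henselian analogue of the body of the proof of Theorem~\ref{theorem:common_alg_lifting}. Applying Elkik's Theorem~2 separately to $\alpha_N$ and $\beta_N$, one obtains $A$-homomorphisms $\gamma_0\colon B\to C$ and $\bar\gamma_0\colon \bar B\to \bar C$ with $\gamma_0\equiv\alpha_N$ and $\bar\gamma_0\equiv\beta_N$ modulo $\adic^{N-h}$. The two compositions $\varphi\circ\gamma_0$ and $\bar\gamma_0\circ\phi$ agree modulo $\adic^{N-h}$, and modulo $\adic^{2(N-h)}$ their difference is an $A$-derivation $d\colon B\to \adic^{N-h}\bar C/\adic^{2(N-h)}\bar C$. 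Using Proposition~\ref{prop:H_Omega} to translate $\adic^h\subseteq \bar H_{B/A}$ into a bound on $H_{\Omega^s_{B/A}}$, Lemma~\ref{lemma_duo_lift}, applied to the surjection $C/\adic^{2(N-h)}C\twoheadrightarrow \bar C/\adic^{2(N-h)}\bar C$ and the module $\Omega^s_{B/A}$, lifts $d$ through $\varphi$ to an $A$-derivation $d'\colon B\to \adic^{N-2h}C/\adic^{2(N-2h)}C$. Replacing $\gamma_0$ by $\gamma_1:=\gamma_0-i\circ d'$ (where $i$ is the inclusion into $C$) restores commutativity modulo $\adic^{2(N-2h)}$ without disturbing the agreement with $\alpha_N$ modulo $\adic^{N-2h}$.

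Finally, iterate. Each correction doubles the precision while only losing a constant $2h$ to conductors, so after finitely many rounds one has a compatible approximate pair valid modulo $\adic^M$ for $M$ arbitrarily large; a final application of Elkik's Theorem~2 promotes it to honest $A$-homomorphisms $\gamma\colon B\to C$ and $\bar\gamma\colon \bar B\to \bar C$ with $\bar\gamma\circ\phi=\varphi\circ\gamma$, and by construction $\widehat\gamma\equiv\varepsilon$ and $\widehat{\bar\gamma}\equiv\bar\varepsilon$ modulo $\adic^{N-2h}\subseteq\adic^n$, as required. The main obstacle is the second step: identifying the obstruction to commutativity as an $A$-derivation and lifting it through $\varphi$ while keeping the numerical precision bookkeeping consistent for \emph{both} $\gamma$ and $\bar\gamma$ simultaneously. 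This is precisely where the pair-version of the module-lifting machinery of Section~3.1 (Lemma~\ref{lemma_duo_lift} together with Proposition~\ref{prop:H_Omega}) takes the place of the single-algebra ingredients used by Elkik, and it is the only point where the henselian analogue is strictly harder than the complete case already handled in Theorem~\ref{theorem:common_alg_lifting}.
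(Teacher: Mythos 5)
There is a genuine gap, and it sits at the decisive point of the theorem. Your iteration, modelled on Theorem~\ref{theorem:common_alg_lifting}, only ever produces a pair of compatible homomorphisms into the \emph{finite quotients} $C/\adic^M C$ and $\bar C/\adic^M\bar C$: already at the first correction, $d'$ takes values in $\adic^{N-2h}C/\adic^{2(N-2h)}C$, so ``$\gamma_0-i\circ d'$'' is not a map into $C$ (there is no inclusion $i$ of that subquotient into $C$), but only a map into $C/\adic^{2(N-2h)}C$. In the complete case this is harmless because the successive corrections converge in $C$; here $C$ is only henselian, so the iteration cannot terminate in honest maps to $C$ and $\bar C$. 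Your final move --- ``a final application of Elkik's Theorem~2 promotes it to honest $A$-homomorphisms $\gamma,\bar\gamma$ with $\bar\gamma\circ\phi=\varphi\circ\gamma$'' --- is exactly where the argument breaks: Elkik's Theorem~2 is a statement about a \emph{single} rig-smooth algebra mapping to a henselian pair, applied separately to $B$ and to $\bar B$ it only yields $\gamma$ and $\bar\gamma$ for which the square commutes modulo a high power of $\adic$, with no mechanism to force exact commutativity. Upgrading approximate commutativity of the square to exact commutativity over a henselian (non-complete) pair is precisely the content of the present theorem (and of Theorem~\ref{theorem:pair_lift}, which the paper \emph{deduces} from it), so invoking it at this stage is circular.

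The paper's proof avoids this by never letting the pair structure degenerate into two independent approximations. It first algebraizes only $\bar\varepsilon$ to $\bar\gamma\colon\bar B\to\bar C$ (Theorem~2~bis of Elkik), after arranging a presentation in which $(J/J^2)_t$ is free (symmetric-algebra trick, extra variables, Elkik's Lemma~3) and fixing generators $f_i$, an almost-idempotent $e$, and the auxiliary ideal $J_0$ with $s^pJ\subseteq J_0$. Then $\gamma$ is built at the level of the polynomial ring $A[x,y,t]$ as an \emph{exact} lifting of $\bar\gamma\circ\bar\beta$ along the surjection $C\to\bar C$, and every subsequent modification --- first to match $\varepsilon_n$, then the henselian Newton step solving $p+N(e+p^tQp)=0$ so that $\gamma(J_0)=0$, and finally the torsion argument giving $\gamma(J)=0$ --- is performed by elements of $I=\ker(C\to\bar C)$ lying in $\adic^{n-2h}C\cap I$. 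Because all corrections live in $I$, the composite $B\to C\to\bar C$ never changes, so exact commutativity with the already-fixed $\bar\gamma$ is built in rather than approximated; the henselianity of $(C,\adic C)$ is what solves the explicit system inside $I$. This use of the exact sequence $0\to I\to C\to\bar C\to 0$ is absent from your argument, and without it the pair cannot be made to commute on the nose.
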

\begin{proof}

We fix some generators of $B$ and get the following exact sequences
\begin{align*}
    0\to J\to &A[x]\stackrel{\beta}{\longrightarrow} B\to 0\\
    0\to {\bar J} \to &A[x]\stackrel{\bar\beta}{\longrightarrow} \bar B\to 0
\end{align*}
where $x=\{x_1,\ldots,x_n\}$ is a finite set of indeterminates.

Now, we replace $B$ by $S_{B}(J/J^2)$ and replace $\phi$ by the
composition of the projection $S_{B}(J/J^2)\to B$ (sending $J/J^2$
to zero) and $\phi$. So, we may assume that $(\Omega_{B/A})_t$ is
free $B_t$-module of rank $d$. Adding the generators $y$ of $J/J^2$
to the generators of $B$, we get a surjective homomorphism
$$
\beta'\colon A[x,y]\to S_{B}(J/J^2).
$$
Let $t=\{t_1,\ldots,t_d\}$ be the set of indeterminates, we replace
$\beta$ by the composition of the projection
$$
A[x,y,t]\to A[x,y],\quad t\mapsto 0
$$
and $\beta'$. The homomorphism $\bar \beta$ will be replaced by the
composition of the projection
$$
A[x,y,t]\to A[x],\quad y,t\mapsto 0
$$
and $\bar\beta$. Now, $J$ is the kernel of the new homomorphism
$\beta$. By Lemma~3 of~\cite{Elkik73}, we see that $(J/J^2)_t$ is a
free $B_t$-module.

Let the elements $(f_1,\ldots,f_q)\subseteq J$ form a basis of
$(J/J^2)_t$. Then there is an element $e\in J$ such that
$$
e^2-t^he\in (f_1,\ldots,f_q)
$$
for some natural $h$. Since the sequence
$$
0\to (J/J^2)_t \to \Omega_{A[x,y,t]/A}\otimes_{A[x,y,t]} B_t\to
\Omega_{B_t/A} \to 0
$$
is split exact, the ideal
$$
\triangle^{q}(f_1,\ldots,f_q)\subseteq A[x,y,t]
$$
generated by $q$-minors of the Jacobian matrix of $f_1,\ldots,f_q$
contains some $t^{h'}$. Enlarging $h'$ or replacing $e$ by
$t^{h-h'}e$, we may assume that $h=h'$.

By the Artin-Rees lemma, there are natural numbers $r$ and $c$ such
that
\begin{align*}
    \adic^r C&\cap \annihil_C (t) = 0\\
    \adic^m C&\cap {I} = \adic^{m-c}(\adic^c C\cap {I})\quad \mbox{whenever}\quad m\geqslant c
\end{align*}
Now, we define the ideal
$$
J_0 = (f_1,\ldots,f_q)\subseteq A[x,y,t]
$$
and the element $s=(t^h-e)t^h$. Since
$$
J_t = (f_1,\ldots,f_q,e)_t \quad \mbox{ and } \quad e^2-t^h e \in
(f_1,\ldots,f_q),
$$
we have $J_{s} = (f_1,\ldots,f_q)_s$. Therefore, $s^p J\subseteq
J_0$ for some natural $p$.

Now, we fix a natural number $n>\max(2h+c,h+r)$ and take the
reductions of $\varepsilon$ and $\bar\varepsilon$ modulo $\adic ^n
C$ and $\adic^n \bar C$, respectively:
$$
\xymatrix@!0@R=20pt@C=25pt{
    &&&&{C}\ar[dll]_{\varphi}\ar[ddd]^{\pi}&&&\\
    &&{\bar C}\ar[dll]\ar[ddd]^{\bar \pi}&&&&&\\
    0&&&&&&&\\
    &&&&{C/\adic ^n   C}\ar[dll]_{\varphi_n}&&&{B}\ar[dll]\ar[lll]_-{\varepsilon_n}\\
    &&{\bar C/\adic ^n   \bar C}\ar[dll]&&&{\bar B}\ar[dll]\ar[lll]_-{\bar \varepsilon_n}&&\\
    0&&&0&&&&\\
}
$$
Here $\pi$ and $\bar\pi$ are the quotient maps, $\varphi_n$ is
induced by $\varphi$, and the homomorphisms $\varepsilon_n$ and
$\bar\varepsilon_n$ are the reductions of $\varepsilon$ and $\bar
\varepsilon$, respectively.

Now, by Theorem~2 bis of~\cite{Elkik73}, the homomorphism $\bar
\varepsilon$ approximates by some $\bar \gamma\colon \bar B \to \bar
C$ module $\adic^n\bar C$. We denote the composition
$\varepsilon_n\circ \beta$ by $\alpha$ and let $\gamma'$ be an
arbitrary lifting of $\bar \gamma \circ \bar\beta$. We draw
everything on the following diagram:
$$
\xymatrix@!0@R=30pt@C=30pt{
    &&&&&&{{I}}\ar[dll]&&&&\\
    &&&&{C}\ar[dll]_{\varphi}\ar[ddd]_{\pi}|(0.38)\hole&&&&&&\\
    &&{\bar C}\ar[dll]\ar[ddd]_{\bar\pi}&&&&&&&&\\
    {0}&&&&&&&&&&\\
    &&&&{C/\adic ^{n}   C}\ar[dll]_{\varphi_n}&&&&{B}\ar[dll]\ar[llll]_-{\varepsilon_n} |(0.57)\hole&&{A[x,y,t]}\ar[ll]_-{\beta}\ar[lllld]^{\bar\beta}\ar@/_2pc/[llllll]_{\alpha} |(0.753)\hole\ar@{-->}@/_2pc/[lllllluuu]_{\gamma'}\\
    &&{\bar C/\adic ^{n}   \bar C}\ar[dll]&&&&{\bar B}\ar[dll]\ar[llll]_-{\bar\varepsilon_n}\ar@/_3pc/[lllluuu]_{\bar\gamma}&&&&\\
    {0}&&&&{0}&&&&&&\\
}
$$
The map $\gamma'$ is denoted by the dotted line because the triangle
with $\gamma'$, $\alpha$, and $\pi$ is not necessarily commutative.
Firstly, we are going to replace $\gamma'$ by $\gamma''$ such that
$\pi\circ \gamma'' = \alpha$. Secondly, we will show how to replace
$\gamma''$ by $\gamma$ such that $\gamma(J)=0$.

Let us define the following vector
$$
k'=\pi\circ\gamma'(x,y,t)-\alpha(x,y,t).
$$
If $k'=(k_i')$, then $k_i'\in \ker\varphi_n$. Therefore, we can find
$k=(k_i)$ with $k_i\in I$ and such that $\pi(k_i)=k_i'$. Let us
define the following homomorphism of $A$-algebras
$$
\gamma''\colon A[x,y,t]\to C\quad \mbox{ where} \quad
(x,y,t)\mapsto\gamma'(x,y,t)-k.
$$
Since $\gamma''$ coincides with $\gamma'$ modulo ${I}$ the whole
diagram with $\gamma''$ instead of $\gamma'$ is commutative. Now, we
are going to show that $\gamma(J)=0$. We will do this in two steps.

Step 1. We will show that there is a homomorphism $\gamma\colon
A[x,y,t]\to C$ such that $\gamma(J_0)=0$. Let $a=\gamma''(x,y,t)$ be
a vector of elements of $C$. We will search the homomorphism
$\gamma$ among ones sending $(x,y,t)$ to $a+t^hp$ for some vector
$p=(p_i)$, where $p_i\in \adic^{n-2h}C\cap{I}$. We also denote
$f=(f_i)$, the vector of generators of $J_0$.

By construction, we have
$$
\gamma''(J_0)\subseteq \gamma''(J)\subseteq \adic^nC\cap {I} =
\adic^{n-c}(\adic^c C\cap {I})
$$
Thus $f(a)=t^{2h}e$, where $e=(e_i)$ and $e_i\in \adic^{n-2h}C\cap
{I}$. Since $t^h\in\triangle^{q}(f)$, we have $M(a)N = t^hE$, where
$M(a)$ is the Jacobian matrix of $f$ at the point $a$, $N$ is some
matrix and $E$ is the identity matrix. Now, we want to have
$$
0=f(a+tp) = f(a) + t^hM(a)p + t^{2h}p^tQp.
$$
where the right-hand part is the Taylor expansion up to the second
order term and $p^t$ is the transposed vector $p$. We replace $f(a)$
by $t^{2h}e$ and $t^hE$ by $M(a)N$. As the result, we will obtain
the following equation
$$
0= t^hM(a) (p + N(e + p^tQp))
$$
It is enough to find $p\in \adic^{n-2h}C\cap {I}$ such that
$$
p + N(e + p^tQp) = 0.
$$
The zero vector $p=0$ is a solution of this system in
$C/\adic^{n-2h}C$ and the Jacobian matrix of this system at $p=0$ is
the identity matrix $E$. Since $(C,\adic C)$ is henselian, there is
a solution $p$ in $C$ such that $p_i\in \adic^{n-2h}C$. But then
$$
(E+p^{t}Q)p = -Ne,\quad \mbox{ and } \quad e_i\in \adic^{n-2h}C\cap
{I}
$$
Since all $p_i$ are in the radical of $C$, the matrix $(E+p^{t}Q)$
is invertible and, hence,  $p_i\in \adic^{n-2h}C\cap {I}$.

Step 2. We will show that $\gamma(J)=0$. By construction, the
completion of the map
$$
\gamma\colon A[x,y,t]\to C\quad\mbox{ by }\quad(x,y,t)\mapsto a+t^hp
$$
coincides with $\varepsilon$ modulo $\adic^{n-h}\widehat{C}$. Thus
$\gamma(J)\subseteq \adic^{n-h}C$. Moreover, by definition of $s$,
we have
$$
t^{2hp}\gamma(J)=\gamma(s^pJ)\subseteq \gamma(J_0) = 0
$$
Then by the choice of $n$ and $r$, it follows that $\gamma(J)=0$.

We have constructed $\gamma\colon A[x,y,t]\to C$ being a lifting of
$\varepsilon_{n-h}\circ \beta$. It induces a homomorphism of
$\gamma\colon B\to C$ being a lifting of $\varepsilon_{n-h}$. Since
$\bar \gamma$ is a lifting of $\bar\varepsilon_n$, it is also a
lifting of $\bar \varepsilon_{n-h}$. Since $n$ is an arbitrary
large, we are done.
\end{proof}

\begin{corollary}\label{cor:duo_alg_lift}

Let $A$ be a ring $\adic=(t)\subseteq A$ a principal ideal. Let
$\phi\colon B\to \bar B$ be a surjective $A$-homomorphism of
finitely generated $A$-algebras, $\varphi\colon C\to \bar C$ is a
homomorphism of $A$-algebras. Assume that  $B$ and $\bar B$ are
smooth over the complement of $V(\adic)$ and we are given
homomorphisms $\varepsilon \colon \widehat{B}\to \widehat{C}$ and
$\bar \varepsilon \colon \widehat{\bar B}\to \widehat{\bar C}$ such
that the following diagram with the solid arrows only is commutative
$$
\xymatrix@R=10pt@C=10pt{
    &&&&&{\widehat{C}}\ar[dll]_{\widehat{\varphi}}&&&&&{\widehat{B}}\ar[dll]_{\widehat{\phi}}\ar[lllll]_(0.4){\varepsilon}\\
    &&&{\widehat{\bar C}}\ar[dll]&&&&&{\widehat{\bar B}}\ar[dll]\ar[lllll]_(0.4){\bar\varepsilon}&&\\
    &0&&&{C'}\ar[dll]_(0.25){\varphi'}|!{[l];[lu]}\hole\ar@{-->}[uur]|!{[u];[ur]}\hole&&0&&&&\\
    &&{\bar C'}\ar[dll]\ar@{-->}[uur]&&&{C}\ar@{-->}[lu]\ar[dll]_(0.25){\varphi}|!{[lll];[rrrd]}\hole\ar[uuu]|!{[uu];[uur]}\hole&&&&&{B}\ar[dll]_{\phi}\ar[uuu]\ar@{-->}[ullllll]^(0.4){\gamma}|!{[l];[lu]}\hole\\
    0&&&{\bar C}\ar@{-->}[lu]\ar[dll]\ar[uuu]&&&&&{\bar B}\ar[dll]\ar[uuu]\ar@{-->}[ullllll]^(0.4){\bar\gamma}&&\\
    &0&&&&&0&&&&\\
}
$$
Then, for every natural number $n$, there exist \'etale extensions
$C\to C'$ and $\bar C\to\bar C'$ such that the induced by $\varphi$
map $\varphi'\colon C'\to \bar C'$ is well-defined and surjective.
There also exist two homomorphisms $\gamma\colon B\to C$ and $\bar
\gamma \colon \bar B\to \bar C$ such that the squares with
dotted arrows on the previous diagram is commutative, and the
following diagrams are commutative
$$
\xymatrix@!0@C=40pt{
    {}&{\widehat{C}}\ar[rd]&{}\\
    {\widehat{B}}\ar[ru]^{\varepsilon}\ar[rd]_{\widehat{\gamma}}&{}&{\widehat{C}/\adic^{n}\widehat{C}}\\
    {}&{\widehat{C'}}\ar[ru]&{}\\
}
 \quad
\xymatrix@!0@C=40pt{
    {}&{\widehat{\bar C}}\ar[rd]&{}\\
    {\widehat{\bar B}}\ar[ru]^{\bar \varepsilon}\ar[rd]_{\widehat{\bar\gamma}}&{}&{\widehat{\bar C}/\adic^{n}\widehat{\bar C}}\\
    {}&{\widehat{\bar C'}}\ar[ru]&{}\\
}
$$
\end{corollary}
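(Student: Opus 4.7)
The plan is to deduce this corollary from Theorem~\ref{theorem:com_alg_lift_h} by first passing to henselizations and then descending to \'etale extensions. Let $C^h$ and $\bar C^h$ denote the henselizations of $C$ and $\bar C$ with respect to $\adic$. Since henselization does not alter the $\adic$-adic completion, we have $\widehat{C^h}=\widehat C$ and $\widehat{\bar C^h}=\widehat{\bar C}$, so the homomorphisms $\varepsilon$ and $\bar\varepsilon$ can be regarded as landing in the completions of the henselian pairs, and $\varphi$ induces $\varphi^h\colon C^h\to\bar C^h$.

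In order to apply Theorem~\ref{theorem:com_alg_lift_h}, I need a surjection between the two henselian pairs. Since $\phi$ is surjective, so is $\widehat\phi$, and the commutativity $\widehat\varphi\circ\varepsilon=\bar\varepsilon\circ\widehat\phi$ forces $\bar\varepsilon(\widehat{\bar B})\subseteq\widehat\varphi(\widehat C)$. I therefore replace $\bar C^h$ by the henselian quotient $\bar D:=C^h/\ker(\varphi^h)$, which receives a tautological surjection from $C^h$, embeds into $\bar C^h$ via $\varphi^h$, and through whose completion $\bar\varepsilon$ now factors. Theorem~\ref{theorem:com_alg_lift_h}, applied to the surjections $\phi\colon B\twoheadrightarrow\bar B$ and $C^h\twoheadrightarrow\bar D$ together with the approximating maps $\varepsilon$ and $\bar\varepsilon$, then yields $A$-homomorphisms $\gamma\colon B\to C^h$ and $\bar\gamma\colon\bar B\to\bar D\hookrightarrow\bar C^h$ whose completions agree with $\varepsilon$ and $\bar\varepsilon$ modulo $\adic^n$.

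To finish, I descend from the henselian pairs to strictly \'etale extensions. Because $B$ and $\bar B$ are finitely generated as $A$-algebras, and $C^h$ and $\bar C^h$ are the filtered colimits of their strictly \'etale extensions, the images $\gamma(B)$ and $\bar\gamma(\bar B)$ are already contained in single strictly \'etale extensions $C\to C'\subseteq C^h$ and $\bar C\to\bar C'\subseteq\bar C^h$. Enlarging $C'$ and $\bar C'$ to be compatible along $\varphi$ produces a well-defined map $\varphi'\colon C'\to\bar C'$, and its surjectivity is obtained from the surjectivity of $C^h\twoheadrightarrow\bar D$ by a further enlargement so that $\bar C'$ lies inside the image. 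The hard part is precisely this last matching: I must arrange simultaneously that $\bar C'$ is \emph{\'etale over $\bar C$} and that it sits inside the image of $\varphi'$, which is a cofinality question between the filtered systems of strictly \'etale neighborhoods of $\bar C$ and those of $\bar D$, resolved using the surjectivity $C^h\twoheadrightarrow\bar D$ together with the standard fact that \'etale neighborhoods lift along surjections of henselian pairs.
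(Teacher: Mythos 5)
Your overall route -- pass to the henselizations $(C^h,\adic C^h)$ and $(\bar C^h,\adic\bar C^h)$, note that the completions are unchanged, apply Theorem~\ref{theorem:com_alg_lift_h}, and then descend through the filtered colimit of strictly \'etale extensions using that $B$ and $\bar B$ are finitely generated -- is the expected derivation. But the two places where you deviate from it are exactly where the argument breaks. First, the factorization of $\bar\varepsilon$ through $\widehat{\bar D}$, where $\bar D=C^h/\ker\varphi^h$, is asserted rather than proved. The commutativity of the solid square only gives $\bar\varepsilon(\widehat{\bar B})\subseteq\widehat\varphi(\widehat C)\subseteq\Im\bigl(\widehat{\bar D}\to\widehat{\bar C}\bigr)$; to lift this to a map $\widehat{\bar B}\to\widehat{\bar D}$ you need either injectivity of $\widehat{\bar D}\to\widehat{\bar C}$ (not automatic: $\bar C^h$ is not a finite $\bar D$-module, so there is no Artin--Rees comparison of the subspace and $\adic$-adic topologies on $\bar D$), or well-definedness of $b\mapsto\varepsilon(\tilde b)\bmod(\ker\varphi^h)\widehat C$ for a lift $\tilde b$ of $b$, which requires $\varepsilon(\ker\widehat\phi)\subseteq(\ker\varphi^h)\widehat C$, whereas all you know is $\varepsilon(\ker\widehat\phi)\subseteq\ker\widehat\varphi$, an a priori larger ideal.

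Second, and more seriously, the final matching -- finding $\bar C'$ \'etale over $\bar C$ that both receives $\bar\gamma$ and lies in the image of $C'$, so that $\varphi'$ is surjective -- is the actual content of the surjectivity claim, and you leave it as a ``cofinality question'' settled by an unnamed standard fact. No such fact can settle it in the stated generality, because the conclusion fails for non-surjective $\varphi$: take $A=k[t]$, $\adic=(t)$, $B=\bar B=A$, $\phi=\mathrm{id}$, $C=A$, $\bar C=A[x]$, $\varphi$ the inclusion, and $\varepsilon$, $\bar\varepsilon$ the canonical maps; any nonzero \'etale extension $\bar C'$ of $A[x]$ compatible with the approximation condition has dimension $2$ along $V(\adic)$, while any \'etale extension $C'$ of $A$ has dimension at most $1$, so no surjection $C'\to\bar C'$ exists. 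The corollary is only tenable (and is only used, in Lemma~\ref{lemma_reduction}) when $\varphi$ itself is surjective, and in that case your detour through $\bar D$ is unnecessary: henselization of pairs commutes with quotients, so $\bar C^h\cong C^h\otimes_C\bar C$ and $\varphi^h\colon C^h\to\bar C^h$ is already surjective; Theorem~\ref{theorem:com_alg_lift_h} then applies directly to the henselian pair $C^h\to\bar C^h$, and writing $C^h=\varinjlim C_i$ over strictly \'etale $C\to C_i$, one takes $C'=C_i$ large enough to contain the images of both $\gamma$ and (via $\bar C^h=\varinjlim C_i\otimes_C\bar C$) of $\bar\gamma$, sets $\bar C'=C'\otimes_C\bar C$, which is strictly \'etale over $\bar C$ and automatically receives a surjection from $C'$, and checks the commutativity of the square at a sufficiently large finite stage of the colimit since $B$ and $\bar B$ are finitely generated. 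You should either add the hypothesis that $\varphi$ is surjective and run this simpler argument, or drop the surjectivity of $\varphi'$ from what you claim to prove.
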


\begin{lemma}\label{lemma:hausdorff_quot}
Let $A$ be a ring, $\adic\subseteq A$, $A$ is $\adic$-adically
complete, and $M\subseteq P$ are finitely generated $A$-modules.
Then $M = \bigcap_{k\geqslant 0} M+\adic^k P$.
\end{lemma}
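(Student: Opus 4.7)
The plan is to deduce the claim from Krull's intersection theorem applied to the finitely generated $A$-module $P/M$. The containment $M \subseteq \bigcap_{k} (M + \adic^k P)$ is trivial, so the task reduces to showing $\bigcap_{k} (M + \adic^k P) \subseteq M$, which is equivalent to $\bigcap_{k} \adic^k(P/M) = 0$ inside the finitely generated quotient $P/M$.

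The key observation that powers the application of Krull is that $\adic$ is contained in the Jacobson radical of $A$. Indeed, since $A$ is $\adic$-adically complete, for any $x\in\adic$ the partial sums $\sum_{i=0}^{n}x^i$ form a Cauchy sequence in the $\adic$-adic topology and therefore converge to an element of $A$ which is the inverse of $1-x$. By the standard characterization, this means $\adic$ lies in every maximal ideal of $A$.

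With $A$ Noetherian (by the global conventions of the paper), $P/M$ finitely generated, and $\adic \subseteq \operatorname{Jac}(A)$, Krull's intersection theorem gives $\bigcap_{k\geqslant 0}\adic^k(P/M) = 0$. Lifting through the quotient map $P \to P/M$ turns this into $\bigcap_{k\geqslant 0}(M + \adic^k P) = M$, which is the desired equality. I do not anticipate any obstacle here; the entire content is packaging Krull's theorem via the completeness hypothesis.
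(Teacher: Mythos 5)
Your proof is correct, and it takes a slightly different route from the paper's. The paper argues that, since $A$ is $\adic$-adically complete (and Noetherian by the conventions), the finitely generated module $P/M$ is itself complete and in particular separated, so $\bigcap_{k}\adic^k(P/M)=0$ and the claim follows. You instead reduce to the same separatedness statement but obtain it from Krull's intersection theorem, using completeness only through the observation that $1-x$ is invertible for $x\in\adic$ (via the convergent geometric series), so that $\adic\subseteq\rad(A)$, the Jacobson radical. Both arguments are sound and of comparable depth (each ultimately rests on Artin--Rees type input); yours uses strictly less of the hypothesis, since it only needs $\adic$ to lie in the Jacobson radical of a Noetherian ring (so it would apply, e.g., to any Zariski ring), whereas the paper's version is a one-line appeal to the fact, already recorded in its conventions, that finitely generated modules over a complete Noetherian ring are complete and separated.
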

\begin{proof}
Since $A$ is complete, the module $P/M$ is complete. In particular,
$P/M$ is separated. Thus $\bigcap_{k\geqslant 0}\adic^k P/M = 0$.
Whence $M = \bigcap_{k\geqslant 0} M+\adic^k P$.
\end{proof}

The following corollary is a direct generalization of Theorem~2~bis of~\cite{Elkik73}. The difference between the corollary and Theorem~\ref{theorem:com_alg_lift_h} is the additional condition on the sections. They must factors through given open subsets.

\begin{corollary}
Let $A$ be a ring, $\adic=(t)\subseteq A$ a principal ideal,
$$
\phi\colon B\to \bar B \quad \mbox{ and }\quad \psi\colon C\to \bar
C
$$
be surjective homomorphisms of $A$-algebras such that $B$ and $\bar
B$ are finitely generated, and the pairs $(C,\adic C)$ and $(\bar
C,\adic \bar C)$ are henselian. Let we be given ideals $I\subseteq
B$, $\bar I\subseteq \bar B$ and $A$-homomorphisms
$$
\varphi\colon B\to
\widehat{C}\quad \mbox{ and }\quad \bar \varphi\colon \bar B\to
\widehat{\bar C}
$$
such that
$$
\varphi(I)\widehat{C}\supseteq \adic^c\quad \mbox{ and
}\quad\bar\varphi(\bar I)\widehat{\bar C}\supseteq \adic^c
$$
for some natural $c$. Then, for every $n$, there exist
$A$-homomorphisms
$$
\gamma\colon B\to C\quad \mbox{ and }\quad \bar \gamma \colon \bar
B\to \bar C
$$
such that
$$
\gamma(I)C\supseteq\adic^c\quad \mbox{ and }\quad \bar \gamma(\bar
I)\bar C\supseteq \adic^c
$$
and the following diagram is commutative
$$
\xymatrix@!0@R=30pt@C=60pt{
    {}&{C}\ar[dd]|!{[dl];[d]}\hole\ar[dl]_{\psi}&{}&{B}\ar[dd]^(0.4){\varphi}\ar[dl]_{\phi}\ar@{-->}[ll]_(0.3){\gamma}\\
    {\bar C}\ar[dd]&{}&{\bar B}\ar[dd]^(0.4){\bar\varphi}\ar@{-->}[ll]_(0.3){\bar \gamma}&{}\\
    {}&{C/\adic^n C}\ar[dl]_{\psi_n}&{}&{\widehat{C}}\ar[dl]_{\widehat{\psi}}\ar[ll]|!{[lu];[l]}\hole\\
    {\bar C/\adic^n\bar C}&{}&{\widehat{\bar C}}\ar[ll]&{}\\
}
$$
\end{corollary}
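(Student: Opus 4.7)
The plan is to derive the corollary from Theorem~\ref{theorem:com_alg_lift_h}, then upgrade the mod-$\adic^n$ approximation to the genuine ideal containment $\gamma(I)C\supseteq \adic^c C$ via Nakayama's lemma plus faithful flatness of completion.

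First I would extend $\varphi$ and $\bar\varphi$ uniquely to continuous $A$-homomorphisms $\widehat\varphi\colon \widehat B\to \widehat C$ and $\widehat{\bar\varphi}\colon \widehat{\bar B}\to \widehat{\bar C}$; they sit in the commutative square over $\widehat\psi$ and $\widehat\phi$ that Theorem~\ref{theorem:com_alg_lift_h} takes as input. Setting $n'=\max(n,c+1)$ and applying that theorem with approximation parameter $n'$ produces $\gamma\colon B\to C$ and $\bar\gamma\colon \bar B\to \bar C$ satisfying $\psi\circ\gamma=\bar\gamma\circ\phi$, together with the congruences $\widehat\gamma\equiv\widehat\varphi\pmod{\adic^{n'}\widehat C}$ and $\widehat{\bar\gamma}\equiv\widehat{\bar\varphi}\pmod{\adic^{n'}\widehat{\bar C}}$. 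Since $n'\geq n$, all commutativity statements demanded by the corollary's diagram are then immediate.

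The only new content is the ideal inclusions $\gamma(I)C\supseteq \adic^c C$ and $\bar\gamma(\bar I)\bar C\supseteq \adic^c \bar C$; by symmetry I treat only the first. The congruence $\widehat\gamma\equiv\widehat\varphi\pmod{\adic^{n'}\widehat C}$ gives
$$
\gamma(I)\widehat C+\adic^{n'}\widehat C=\varphi(I)\widehat C+\adic^{n'}\widehat C\supseteq \adic^c\widehat C
$$
by the hypothesis on $\varphi$. Since $n'\geq c+1$ we have $\adic^{n'}\widehat C\subseteq \adic\cdot \adic^c\widehat C$, so the finitely generated $\widehat C$-module $M=(\gamma(I)\widehat C+\adic^c\widehat C)/\gamma(I)\widehat C$ satisfies $M=\adic M$. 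Henselianness of $(C,\adic)$ places $\adic$ in the Jacobson radical of $C$, and hence $\adic\widehat C$ in the Jacobson radical of $\widehat C$, so Nakayama forces $M=0$, i.e.\ $\gamma(I)\widehat C\supseteq \adic^c\widehat C$. Faithful flatness of $\widehat C$ over $C$ (another consequence of $\adic\subseteq \operatorname{Jac}(C)$) then makes the $C$-module $(\gamma(I)C+\adic^c C)/\gamma(I)C$ vanish after base change to $\widehat C$, so it already vanishes, giving the desired containment.

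All the heavy lifting is carried by Theorem~\ref{theorem:com_alg_lift_h}; the only subtlety is taking the approximation parameter one step past $c$ so that Nakayama can convert the mod-$\adic^{n'}$ agreement into a genuine ideal containment, after which faithful flatness descends the containment from $\widehat C$ down to $C$. I do not anticipate any serious obstacle beyond the invocation of the main lifting theorem.
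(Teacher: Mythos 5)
Your proposal is correct and follows essentially the same route as the paper: apply Theorem~\ref{theorem:com_alg_lift_h} with the approximation parameter bumped above $c$, upgrade the resulting congruence to the containment $\adic^c\widehat{C}\subseteq\gamma(I)\widehat{C}$, and then descend to $C$ using faithful flatness of $C\to\widehat{C}$ for the henselian pair. The only (harmless) difference is the middle step: you invoke Nakayama's lemma (valid since $\adic\widehat{C}$ lies in the Jacobson radical of the complete Noetherian ring $\widehat{C}$), whereas the paper iterates the inclusion $\adic^c\widehat{C}\subseteq\gamma(I)\widehat{C}+\adic^k\widehat{C}$ to all $k$ and concludes via Lemma~\ref{lemma:hausdorff_quot}.
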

\begin{proof}
We fix $m >\max(n,c)$ and apply
Theorem~\ref{theorem:com_alg_lift_h} with $m$ instead of $n$.
Therefore, we find $A$-homomorphisms $\gamma$ and $\bar \gamma$ as
on the previous diagram. We will show that $\adic^c\subseteq
\gamma(I)C$, the second inclusion is proven in the same way. By
definition, $\varphi(I)\subseteq \gamma(I)+\adic^m\widehat{C}$.
Hence $(\adic \widehat{C})^c\subseteq \gamma(I)\widehat{C}+(\adic
\widehat{C})^m$. Thus, by induction, we have $(\adic
\widehat{C})^c\subseteq \gamma(I)\widehat{C}+(\adic \widehat{C})^k$
for all $k\geqslant m$. By Lemma~\ref{lemma:hausdorff_quot},
$\adic^c \widehat{C}\subseteq \gamma(I)\widehat{C}$. Since $C$ is
henselian, $C\to \widehat{C}$ is faithfully flat. Therefore, for any
ideal $J\subseteq C$, we have $(J \widehat{C})\cap C = J$. In
particular, $\adic^c C \subseteq \gamma(I)C$.

\end{proof}

\section{Approximation of pairs}

\subsection{Approximation of henselizations}

\begin{theorem}\label{theorem:pair_lift}
Let $A$ be a ring $\adic=(t)\subseteq A$ is a principal ideal,
$\phi\colon B\to \bar B$ and $\psi\colon C\to \bar C$ are surjective
$A$-homomorphisms of $A$-algebras such that
\begin{enumerate}
\item $B$ and $\bar B$ are finitely generated over $A$.

\item $(C,\adic C)$ and $(\bar C,\adic \bar C)$ are henselian.

\item $I$ is the kernel of $\psi$.
\end{enumerate}
Let $n,r,h,c$ be natural numbers such that
\begin{enumerate}
\item $\adic^r C\cap \annihil_C(t)=0$ and $\adic^r\bar C\cap \annihil_{\bar
C}(t)=0$.

\item $\adic^kC\cap I = \adic^{k-c}(\adic^c C\cap I)$ for all $k\geqslant
c$.

\item $\adic^h\subseteq  H_{B/A}$ and $\adic^h\subseteq  H_{\bar
B/A}$.

\item $n>\max(r+2h, c+2h, 4h)$.
\end{enumerate}
and we are given a commutative diagram
$$
\xymatrix{
    {B}\ar[r]^{\phi}\ar[d]^{\alpha_n}&{\bar B}\ar[d]^{\beta_n}\\
    {C/\adic^n C}\ar[r]^{\psi_n}&{\bar C/\adic^n \bar C}\\
}
$$
Then there exist $A$-homomorphisms $\alpha\colon B\to C$ and
$\beta\colon \bar B\to \bar C$ such that the following diagram is
commutative
$$
\xymatrix@!0@R=30pt@C=60pt{
    {}&{C}\ar[dd]|!{[d];[dl]}\hole\ar[dl]_{\psi}&{}&{B}\ar[dd]^(0.4){\alpha_n}\ar[dl]_{\phi}\ar@{-->}[ll]_(0.3){\alpha}\\
    {\bar C}\ar[dd]&{}&{\bar B}\ar[dd]^(0.4){\beta_n}\ar@{-->}[ll]_(0.3){\beta}&{}\\
    {}&{C/\adic^{n-2h}C}\ar[dl]_{\psi_{n-2h}}&{}&{C/\adic^nC}\ar[dl]_{\psi_n}\ar[ll]|!{[l];[lu]}\hole\\
    {\bar C/\adic^{n-2h}\bar C}&{}&{\bar C/\adic^n \bar C}\ar[ll]&{}\\
}
$$
\end{theorem}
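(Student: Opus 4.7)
The plan is to deduce Theorem~\ref{theorem:pair_lift} from Theorem~\ref{theorem:common_alg_lifting} and Theorem~\ref{theorem:com_alg_lift_h} by passing through completions and back. In effect, the given maps to $C/\adic^n C$ and $\bar C/\adic^n \bar C$ are first lifted to continuous maps to $\widehat C$ and $\widehat{\bar C}$, and then those complete maps are approximated by henselian ones.

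First, I transport the hypotheses to the complete setting. The identifications $C/\adic^n C = \widehat C/\adic^n \widehat C$ and $\bar C/\adic^n \bar C = \widehat{\bar C}/\adic^n \widehat{\bar C}$ let one regard $\alpha_n$ and $\beta_n$ as maps $\widehat B \to \widehat C/\adic^n \widehat C$ and $\widehat{\bar B} \to \widehat{\bar C}/\adic^n \widehat{\bar C}$, since the targets are killed by $\adic^n$ and the sources are finitely generated. Flatness of $C\to\widehat C$ (together with Noetherianity) preserves annihilators and finite intersections of finitely generated submodules, so $\adic^r\widehat C\cap\annihil_{\widehat C}(t)=0$ and $\adic^k\widehat C\cap\widehat I=\adic^{k-c}(\adic^c\widehat C\cap\widehat I)$ for $k\geqslant c$, with the analogous facts for $\widehat{\bar C}$ and $\widehat{\bar I}$. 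Lemma~\ref{lemma:H_Bness} translates $\adic^h\subseteq H_{B/A}$ into $\adic^h\widehat B\subseteq \bar H_{\widehat B/\widehat A}$, and likewise for $\widehat{\bar B}$. Thus the numerical hypotheses of Theorem~\ref{theorem:common_alg_lifting} are in force for the completed data.

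Next, I apply Theorem~\ref{theorem:common_alg_lifting} to the completed diagram to obtain $\widehat A$-homomorphisms $\widetilde\alpha\colon\widehat B\to\widehat C$ and $\widetilde\beta\colon\widehat{\bar B}\to\widehat{\bar C}$ which fit into the cube with $\widehat\varphi=\widehat\psi$ and whose reductions modulo $\adic^{n-2h}$ agree with those of $\alpha_n$ and $\beta_n$. Then I feed $(\widetilde\alpha,\widetilde\beta)$ into Theorem~\ref{theorem:com_alg_lift_h} with approximation parameter $n-2h$; this returns $A$-homomorphisms $\alpha\colon B\to C$ and $\beta\colon\bar B\to\bar C$ satisfying $\psi\circ\alpha=\beta\circ\phi$, with $\widehat\alpha\equiv\widetilde\alpha$ and $\widehat\beta\equiv\widetilde\beta$ modulo $\adic^{n-2h}$ in $\widehat C$ and $\widehat{\bar C}$. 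Concatenating the two congruences yields $\alpha\equiv\alpha_n$ and $\beta\equiv\beta_n$ modulo $\adic^{n-2h}$, which is precisely the commutativity asserted by the theorem.

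The substance of the argument sits in Theorem~\ref{theorem:common_alg_lifting}; the present statement is a repackaging of Theorem~\ref{theorem:com_alg_lift_h} in which complete input data is replaced by a reduction modulo $\adic^n$. The only bookkeeping that needs care is the transfer of the Artin--Rees style conditions on $C$ and $I$ to $\widehat C$ and $\widehat I$, and this is routine because all the modules involved are finitely generated over a Noetherian ring and $\widehat C$ is $C$-flat.
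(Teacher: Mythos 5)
Your proposal is correct and follows exactly the route the paper takes: its proof of Theorem~\ref{theorem:pair_lift} consists of the single remark that the statement follows from Theorem~\ref{theorem:common_alg_lifting} combined with Theorem~\ref{theorem:com_alg_lift_h}, which is precisely the two-step scheme (lift to completions, then approximate by maps into the henselian pairs) you carry out. Your additional bookkeeping --- transferring the Artin--Rees and annihilator conditions to $\widehat{C}$, $\widehat{\bar C}$ by flatness and translating $\adic^h\subseteq H_{B/A}$ into $\adic^h\subseteq \bar H_{\widehat{B}/\widehat{A}}$ via Lemma~\ref{lemma:H_Bness} --- is the detail the paper leaves implicit, and it is sound.
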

\begin{proof}
The assertion immediately follows from
Theorem~\ref{theorem:common_alg_lifting} and
Theorem~\ref{theorem:com_alg_lift_h}.
\end{proof}

\begin{corollary}\label{cor:pair_lift}
Let $A$ be a ring $\adic=(t)\subseteq A$ is a principal ideal,
$\phi\colon B\to \bar B$ and $\psi\colon C\to \bar C$ are surjective
$A$-homomorphisms of finitely generated $A$-algebras, and  $I$ is the kernel of $\psi$. Let $n,r,h,c$ be natural numbers such that
\begin{enumerate}
\item $\adic^r C\cap \annihil_C(t)=0$ and $\adic^r\bar C\cap \annihil_{\bar
C}(t)=0$.

\item $\adic^kC\cap I = \adic^{k-c}(\adic^c C\cap I)$ for all $k\geqslant
c$.

\item $\adic^h\subseteq  H_{B/A}$ and $\adic^h\subseteq  H_{\bar
B/A}$.

\item $n>\max(r+2h, c+2h, 4h)$.
\end{enumerate}
and we are given a commutative diagram
$$
\xymatrix{
    {B}\ar[r]^{\phi}\ar[d]^{\alpha_n}&{\bar B}\ar[d]^{\beta_n}\\
    {C/\adic^n C}\ar[r]^{\psi_n}&{\bar C/\adic^n \bar C}\\
}
$$
Then there exist strict etale extensions $C\to C'$, $\bar C\to \bar C'$, and a surjective $A$-homomorphism $\psi'\colon C'\to \bar C'$ together with $A$-homomorphisms $\alpha\colon B\to C'$ and $\beta\colon \bar B\to \bar C'$ such that the following diagram is commutative
$$
\xymatrix@!0@R=30pt@C=60pt{
    {}&{C'}\ar[dd]|!{[d];[dl]}\hole\ar[dl]_{\psi'}&{}&{B}\ar[dd]^(0.4){\alpha_n}\ar[dl]_{\phi}\ar@{-->}[ll]_(0.3){\alpha}\\
    {\bar C'}\ar[dd]&{}&{\bar B}\ar[dd]^(0.4){\beta_n}\ar@{-->}[ll]_(0.3){\beta}&{}\\
    {}&{C/\adic^{n-2h}C}\ar[dl]_{\psi_{n-2h}}&{}&{C/\adic^nC}\ar[dl]_{\psi_n}\ar[ll]|!{[l];[lu]}\hole\\
    {\bar C/\adic^{n-2h}\bar C}&{}&{\bar C/\adic^n \bar C}\ar[ll]&{}\\
}
$$
\end{corollary}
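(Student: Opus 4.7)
The plan is to henselize $(C,\adic C)$ and $(\bar C,\adic \bar C)$ to reduce to Theorem~\ref{theorem:pair_lift}, and then descend the resulting homomorphisms to strict \'etale extensions using finite generation of $B$ and $\bar B$.

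First I would form the henselization $C^h$ of $C$ with respect to $\adic$, and set $D := C^h/IC^h$. Since quotients of henselian pairs are henselian, $(D,\adic D)$ is henselian, and $q\colon C^h\to D$ is surjective with kernel $IC^h$. Flatness of $C\to C^h$ (a filtered colimit $\varinjlim C_i$ of strict \'etale extensions) transfers the Artin--Rees condition $\adic^k C\cap I=\adic^{k-c}(\adic^c C\cap I)$ to $IC^h\subseteq C^h$ and the torsion condition $\adic^r C\cap\annihil_C(t)=0$ to $C^h$; the analogous conditions for $\bar C$ transfer to $D=C^h\otimes_C\bar C$ by the same flatness. Moreover, any strict \'etale $C\to C_i$ satisfies $C/\adic^n C=C_i/\adic^n C_i$, so $C^h/\adic^n C^h=C/\adic^n C$ and, after a further quotient by $I$, $D/\adic^n D=\bar C/\adic^n\bar C$. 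Hence $\alpha_n$ and $\beta_n$ can be viewed as maps into $C^h/\adic^n C^h$ and $D/\adic^n D$, and Theorem~\ref{theorem:pair_lift} applied to the pair $(C^h,D)$ produces $A$-homomorphisms $\alpha\colon B\to C^h$ and $\beta\colon\bar B\to D$ fitting into the appropriate diagram.

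Next I would descend. Since $B$ is a finitely generated $A$-algebra, $\alpha(B)$ is a finitely generated $C$-subalgebra of $C^h=\varinjlim_i C_i$, hence contained in some $C_i$. The kernel $J:=\ker\phi$ is a finitely generated ideal of $B$, and from $q\circ\alpha=\beta\circ\phi$ one has $\alpha(J)\subseteq IC^h$; after enlarging $i$ we may also assume $\alpha(J)\subseteq IC_i$. Setting $C':=C_i$ and $\bar C':=C'/IC'=C'\otimes_C\bar C$, the extension $\bar C\to\bar C'$ is strict \'etale (base change of $C\to C'$), and the quotient $\psi'\colon C'\to\bar C'$ is surjective. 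The condition $\alpha(J)\subseteq IC'$ ensures that $\psi'\circ\alpha\colon B\to\bar C'$ kills $J$, so it factors through $\bar B$ to yield a homomorphism $\bar B\to\bar C'$, automatically consistent with the original $\beta\colon\bar B\to D$ via the natural map $\bar C'\to D$.

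Commutativity of the final diagram of the corollary then follows from that of the $(C^h,D)$-diagram together with the identifications $C/\adic^{n-2h}C=C'/\adic^{n-2h}C'$ and $\bar C/\adic^{n-2h}\bar C=\bar C'/\adic^{n-2h}\bar C'$ coming from the strict \'etale structure. The main technical obstacle is arranging compatibility of the two strict \'etale extensions with the surjection $\psi$; this is resolved by defining $\bar C':=C'/IC'$ rather than as an independent strict \'etale extension of $\bar C$, so that a single choice of $C'$, taken large enough to absorb $\alpha(B)$ and to satisfy $\alpha(J)\subseteq IC'$, automatically yields both a compatible $\bar C'$ and a well-defined factorization of $\beta$.
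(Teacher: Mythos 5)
Your argument is correct and is exactly the route the paper leaves implicit (the corollary is stated without proof as a consequence of Theorem~\ref{theorem:pair_lift}): henselize $(C,\adic C)$ and $(\bar C,\adic \bar C)$, check via flatness of $C\to C^h$ that the numerical hypotheses and the identifications $C^h/\adic^nC^h=C/\adic^nC$, $D/\adic^nD=\bar C/\adic^n\bar C$ persist, apply the theorem, and descend to a finite stage of the filtered colimit of strict \'etale extensions using finite generation of $B$, $\bar B$ and of $\ker\phi$. Taking $\bar C'=C'/IC'$ so that a single choice of $C'$ yields both extensions compatibly with $\psi$ handles the only delicate point correctly.
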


\begin{lemma}\label{lemma:adic_smooth_fbr}
Let $A$ be a ring, $\adic=(t)\subseteq A$ an ideal, $\alpha\colon A\to B$ is a homomorphism such that $B/\adic^k B$ is a finitely generated $A/\adic^k$-algebra for all $k$. Let we be given numbers $n>r$ such that
\begin{enumerate}
\item $\adic^rB\cap \annihil_B(t)=0$.
\item The map $\alpha_n\colon A/\adic^n\to B/\adic^n B$ is smooth.
\end{enumerate}
Assume that $\adic B$ belongs to the Jacobson radical of $B$, then all the maps $\alpha_k$ are smooth.
\end{lemma}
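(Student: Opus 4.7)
For $k\le n$ the smoothness of $\alpha_k$ is immediate: we have $B/\adic^kB=(B/\adic^nB)\otimes_{A/\adic^n}A/\adic^k$, so $\alpha_k$ is the base change of the smooth morphism $\alpha_n$. The entire work lies in the range $k>n$, and my plan for that range is to reformulate smoothness via Lemma~\ref{lemma:HB_smooth}. Since $\adic^k B_k=0$ in $B_k:=B/\adic^kB$, the ideal $\adic^nB_k$ is nilpotent and therefore lies in every prime of $B_k$; it consequently suffices to show
$$
H_{B_k/A_k}+\adic^nB_k=B_k,
$$
that is, that the image of $H_{B_k/A_k}$ in $B_n=B_k/\adic^nB_k$ equals $B_n=H_{B_n/A_n}$.

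I would first set up compatible presentations. Pick $x_1,\dots,x_m\in B$ whose images generate $B_n$ as an $A_n$-algebra. The resulting identity $B=A[x]+\adic^nB$, iterated, gives $B=A[x]+\adic^{jn}B$ for every $j$, so the same $x_i$ generate each $B_k$ as an $A_k$-algebra. The assumption $\adic B\subseteq\operatorname{Jac}(B)$ enters here in two ways: it is what makes this Nakayama-style coherence of generating sets work, and it also places $\adic^nB_k$ in $\operatorname{Jac}(B_k)$. One thus obtains uniform presentations $B_k=A_k[X_1,\dots,X_m]/I_k$ whose kernels are compatible: $I_k\bmod\adic^n=I_n$.

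By Lemma~\ref{lemma:HB_computation}, the ideal $H_{B_k/A_k}$ is the annihilator of the cokernel
$$
\mathcal{C}_k=\hom_{B_k}(I_k/I_k^2,I_k/I_k^2)/\der_{A_k}(A_k[X],I_k/I_k^2),
$$
and analogously for $\mathcal{C}_n$. The surjections $B_k\twoheadrightarrow B_n$ and $I_k/I_k^2\twoheadrightarrow I_n/I_n^2$ give a natural reduction $\mathcal{C}_k\to\mathcal{C}_n$, and since $\alpha_n$ is smooth one has $\mathcal{C}_n=0$. What remains is the key lifting statement: every $B_n$-endomorphism of $I_n/I_n^2$ modulo derivations should come, modulo $\adic^n$, from such a datum on $I_k/I_k^2$.

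This lifting is the main obstacle, and it is precisely where the torsion hypothesis is used. Following the Artin--Rees-style pattern already developed in Section~3.1 (cf.\ Lemmas~\ref{lemma_thg_lift} and~\ref{lemma_step_lift}), the assumption $\adic^rB\cap\annihil_B(t)=0$ together with $n>r$ rules out the hidden $\adic$-torsion in $B$ that would otherwise produce new obstructions on passing from $B_n$-data to $B_k$-data. Once this lift is obtained, the image of $H_{B_k/A_k}$ in $B_n$ contains $H_{B_n/A_n}=B_n$, whence $H_{B_k/A_k}+\adic^nB_k=B_k$, and Lemma~\ref{lemma:HB_smooth} yields smoothness of $\alpha_k$.
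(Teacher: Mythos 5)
Your reduction for $k\le n$ is fine, but for $k>n$ the argument stops exactly where the work begins. The statement you isolate as ``the key lifting statement'' --- that every endomorphism of $I_n/I_n^2$ modulo derivations lifts, modulo $\adic^n$, to the level-$k$ data, so that the image of $H_{B_k/A_k}$ in $B_n$ contains $H_{B_n/A_n}=B_n$ --- is precisely the content of the lemma in this range, and you do not prove it; you only assert that it follows the ``Artin--Rees-style pattern'' of Section~3.1. That appeal does not go through as stated: the lifting lemmas there (Lemmas~\ref{lemma_thg_lift}, \ref{lemma_duo_lift}, \ref{lemma_step_lift}) require hypotheses that are not available here, e.g.\ a condition $\adic^h\subseteq H_P$ on the finite module being mapped, completeness of the ambient module, and explicit numerical inequalities, and the relevant modules $I_k/I_k^2$ vary with $k$. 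Concretely, the kernel of $I_k/I_k^2\to I_n/I_n^2$ is $\bigl(I_k\cap\adic^nA_k[X]+I_k^2\bigr)/\bigl(\adic^nI_k+I_k^2\bigr)$, not $\adic^n\cdot(I_k/I_k^2)$, and controlling $I_k\cap\adic^mA_k[X]$ uniformly in $k$ is not something the single hypothesis $\adic^rB\cap\annihil_B(t)=0$ gives you; moreover, even if you lift the derivation $A_n[X]\to I_n/I_n^2$ witnessing $1\in H_{B_n/A_n}$ coordinatewise, the induced endomorphism of $I_k/I_k^2$ is only congruent to the identity modulo that larger kernel, so it is not visibly multiplication by a unit of $B_k$, and membership in $H_{B_k/A_k}$ does not follow. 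Even the ``natural reduction'' $\mathcal{C}_k\to\mathcal{C}_n$ you invoke needs this same comparison of conormal modules. So there is a genuine gap, not a routine verification.

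For comparison, the paper avoids the $H$-ideals entirely in this lemma: by the fibrewise criterion (EGA~IV, 17.5.2) it suffices to prove that every $\alpha_k$ is \emph{flat}, and flatness is checked by the local (graded) criterion. Flatness of $\alpha_n$ gives that $\adic^j/\adic^{j+1}\otimes_{A/\adic}B/\adic B\to\adic^jB/\adic^{j+1}B$ is an isomorphism for $j<n$; since $\adic=(t)$, multiplication by $t^{j-r}$ identifies the degree-$r$ graded pieces with the degree-$j$ ones (injectivity on the $B$-side is exactly where $\adic^rB\cap\annihil_B(t)=0$ and $n>r$ enter), so the isomorphism propagates to all degrees and every $\alpha_k$ is flat, hence smooth. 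If you want to salvage your route, you would essentially have to reprove such a flatness/graded statement inside your lifting step, at which point the direct argument is both shorter and cleaner.
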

\begin{proof}
By Proposition~17.5.2 of~\cite{EGA1967_32}, it is enough to show that all the maps $\alpha_k$ are flat. Since $\alpha_n$ is flat, the maps
$$
\adic^k/\adic^{k+1}\otimes_{A/\adic} B/\adic B \to \adic^k B/\adic^{k+1}B
$$
are isomorphisms for all $k<n$ by Theorem~49 of~\cite{MatsumCA}. Because of the choice of $r$, the maps
$$
\adic^r /\adic^{r+1}\stackrel{t^{k-r}}{\longrightarrow}\adic^k/\adic^{k+1}
\quad \mbox{ and } \quad
\adic^r B/\adic^{r+1}B\stackrel{t^{k-r}}{\longrightarrow} \adic^kB/\adic^{k+1}B
$$
are isomorphisms. Hence,
$$
\adic^k/\adic^{k+1}\otimes_{A/\adic} B/\adic B \to \adic^k B/\adic^{k+1}B
$$
holds for all $k$ and, by Theorem~49 of~\cite{MatsumCA}, the maps $\alpha_k$ are flat for all $k$.
\end{proof}

\begin{lemma}\label{lemma:adic_smooth_eq_smooth}
Let $A$ and $B$ be rings, $\adic\subseteq A$ an ideal, $\alpha\colon A\to B$ is a homomorphism such that $B$ becomes a localisation of a finitely generated $A$-algebra. Suppose also that $\adic B$ belongs to the Jacobson radical of $B$ and $\alpha_k\colon A/\adic^k \to B/\adic^k B$ are smooth. Then $\alpha$ is smooth.
\end{lemma}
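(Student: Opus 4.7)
The plan is to reduce smoothness of $\alpha$ to smoothness at each maximal ideal of $B$, and then at such a point to apply the fiber criterion. Since $B$ is essentially of finite presentation over the Noetherian ring $A$, the smooth locus of $\alpha$ is open in $\spec B$. The hypothesis that $\adic B$ lies in the Jacobson radical of $B$ forces every maximal ideal $\mathfrak m\subseteq B$ to contain $\adic B$; since open subsets of $\spec B$ are stable under generization and every prime is contained in some maximal ideal, it suffices to verify that $\alpha$ is smooth at each maximal $\mathfrak m$. Fixing such an $\mathfrak m$ and setting $\mathfrak n=\alpha^{-1}(\mathfrak m)\supseteq \adic$, the problem reduces to smoothness of the local homomorphism $A_{\mathfrak n}\to B_{\mathfrak m}$.

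To handle this local map of Noetherian local rings (essentially of finite type), I would apply the fiber criterion for smoothness: it suffices to establish flatness of $A_{\mathfrak n}\to B_{\mathfrak m}$ together with smoothness of the closed fiber $B_{\mathfrak m}/\mathfrak n B_{\mathfrak m}$ over $k(\mathfrak n)$. The closed fiber is a localization of $B\otimes_A k(\mathfrak n)=(B/\adic B)\otimes_{A/\adic}k(\mathfrak n)$, where the identification uses $\mathfrak n\supseteq \adic$; this is the base change of the smooth morphism $\alpha_1\colon A/\adic\to B/\adic B$ along $A/\adic\to k(\mathfrak n)$, and so is smooth over $k(\mathfrak n)$.

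Flatness is the main step. Since every $\alpha_k$ is smooth hence flat, localization gives that $B_{\mathfrak m}/\adic^k B_{\mathfrak m}$ is flat over $A_{\mathfrak n}/\adic^k A_{\mathfrak n}$ for every $k$, and I would then invoke the local criterion of flatness from~\cite{MatsumCA}. The remaining hypothesis of the criterion --- that $I\otimes_{A_{\mathfrak n}} B_{\mathfrak m}$ be $\adic$-adically Hausdorff for every finitely generated ideal $I\subseteq A_{\mathfrak n}$ --- follows from Krull's intersection theorem applied in the Noetherian local ring $B_{\mathfrak m}$, using the inclusion $\adic B_{\mathfrak m}\subseteq \mathfrak m B_{\mathfrak m}$ and that the tensor product is a finitely generated $B_{\mathfrak m}$-module. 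This separatedness verification is the only technical subtlety and is precisely where the Jacobson-radical hypothesis is used in its strongest form; everything else follows from openness of the smooth locus and the standard fiber criterion.
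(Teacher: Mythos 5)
Your argument is correct, but it takes a genuinely different route from the paper's. The paper argues globally via the Jacobian criterion: choosing a presentation $0\to J\to R\to B\to 0$ with $R$ smooth over $A$, it notes that smoothness of all the $\alpha_k$ means $B$ is smooth over $A$ in the $\adic B$-adic topology, so by Theorem~63 of~\cite{MatsumCA} the conormal map $J/J^2\to \Omega_{R/A}\otimes_R B$ is left-invertible modulo every power of $\adic$; since the target is projective and $\adic B$ lies in the Jacobson radical, the splitting lifts (Lemma~2 of (29.A) in~\cite{MatsumCA}), and Theorem~63 applied once more yields smoothness of $\alpha$. You instead localize at a maximal ideal $\mathfrak m\supseteq \adic B$, prove flatness of $A_{\mathfrak n}\to B_{\mathfrak m}$ by the local criterion of flatness (with idealwise separatedness supplied by Krull intersection, i.e.\ by the Jacobson-radical hypothesis), identify the closed fibre with a localization of a base change of the smooth map $\alpha_1$, and conclude by the fibrewise criterion applied to a finite-type model of $B$. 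One small point deserves to be made explicit in your first reduction: what you actually need is that formal smoothness of an essentially finitely generated algebra can be checked at maximal ideals (split injectivity of the conormal map into a finite projective module is a local condition), rather than openness of the smooth locus plus generization, which by itself only gives smoothness at every prime. As for what each approach buys: yours establishes flatness of $\alpha$ itself and stays within the standard flatness-plus-fibres toolkit, whereas the paper's proof never needs flatness of $\alpha$, avoids localizing at all, and uses the Jacobson-radical hypothesis exactly once, to lift the splitting of the conormal sequence from its $\adic$-adic truncations.
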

\begin{proof}
Since $B$ is a localisation of a finitely generated algebra, there is an exact sequence of $A$-homomorphisms
$$
0\to J\to R\to B\to 0
$$
where $R$ is smooth over $A$. The condition on $\alpha_k$ implies that $B$ is smooth over $A$ in $\adic B$-topology (see~\cite{MatsumCA}). Thus, for all $n$, the homomorphism of $B/\adic^n B$-modules
$$
J/J^2\otimes_B B/\adic^n B\to \Omega_{R/A}\otimes_R B/\adic^n B
$$
is left-invertible by Theorem~63 of~\cite{MatsumCA}. Since $\Omega_{R/A}\otimes_R B$ is projective and $\adic$ belongs to the Jacobson radical of $B$, the homomorphism
$$
J/J^2\to \Omega_{R/A}\otimes_R B
$$
is left-invertible by Lemma~2 of~\cite[(29.A), p.~215]{MatsumCA}. Hence, again by Theorem~63, $B$ is smooth over $A$.
\end{proof}

\begin{lemma}\label{lemma:smooth_lifting}
Let $A$ be a ring $\adic=(t)\subseteq A$ is a principal ideal,
$\phi\colon B\to \bar B$ and $\psi\colon C\to \bar C$ are surjective
$A$-homomorphisms of finitely generated $A$-algebras, and  $I$ is the kernel of $\psi$. Let $n,r,h,c$ be natural numbers such that
\begin{enumerate}
\item $\adic^r C\cap \annihil_C(t)=0$ and $\adic^r\bar C\cap \annihil_{\bar
C}(t)=0$.

\item $\adic^kC\cap I = \adic^{k-c}(\adic^c C\cap I)$ for all $k\geqslant
c$.

\item $\adic^h\subseteq  H_{B/A}$ and $\adic^h\subseteq  H_{\bar
B/A}$.

\item $n>\max(r+2h, c+2h, 4h)$.
\end{enumerate}
and we are given a commutative diagram
$$
\xymatrix{
    {B/\adic^nB}\ar[r]^{\phi_n}\ar[d]^{\alpha_n}&{\bar B/\adic^n \bar B}\ar[d]^{\beta_n}\\
    {C/\adic^n C}\ar[r]^{\psi_n}&{\bar C/\adic^n \bar C}\\
}
$$
where $\alpha_n$ and $\beta_n$ are smooth.
Then there exist strict etale extensions $C\to C'$, $\bar C\to \bar C'$, and a surjective $A$-homomorphism $\psi'\colon C'\to \bar C'$ together with smooth $A$-homomorphisms $\alpha\colon B\to C'$ and $\beta\colon \bar B\to \bar C'$ such that the following diagram is commutative
$$
\xymatrix@!0@R=30pt@C=60pt{
    {}&{C'}\ar[dd]|!{[d];[dl]}\hole\ar[dl]_{\psi'}&{}&{B}\ar[dd]^(0.4){\alpha_n}\ar[dl]_{\phi}\ar@{-->}[ll]_(0.3){\alpha}\\
    {\bar C'}\ar[dd]&{}&{\bar B}\ar[dd]^(0.4){\beta_n}\ar@{-->}[ll]_(0.3){\beta}&{}\\
    {}&{C/\adic^{n-2h}C}\ar[dl]_{\psi_{n-2h}}&{}&{C/\adic^nC}\ar[dl]_{\psi_n}\ar[ll]|!{[l];[lu]}\hole\\
    {\bar C/\adic^{n-2h}\bar C}&{}&{\bar C/\adic^n \bar C}\ar[ll]&{}\\
}
$$
\end{lemma}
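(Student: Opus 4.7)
The plan is to apply Corollary~\ref{cor:pair_lift} to obtain the lifted extensions and homomorphisms, then upgrade the liftings to smooth maps by a further strict étale localization chosen compatibly with $\psi'$.

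First I would apply Corollary~\ref{cor:pair_lift} to the given data (pre-composing $\alpha_n,\beta_n$ with the quotient maps $B\to B/\adic^n B$, $\bar B\to \bar B/\adic^n\bar B$). This yields strict étale extensions $C\to C'$ and $\bar C\to\bar C'$, a surjection $\psi'\colon C'\to\bar C'$, and $A$-homomorphisms $\alpha\colon B\to C'$, $\beta\colon\bar B\to\bar C'$ making the required cube commute. Since $C\to C'$ is strict étale, the canonical map $C/\adic^k C\to C'/\adic^k C'$ is an isomorphism for every $k\geq 1$, so the reduction of $\alpha$ modulo $\adic^{n-2h}$ equals $\alpha_{n-2h}$; in particular $\alpha_k$ is smooth for every $k\leq n-2h$, and similarly for $\beta_k$.

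To promote $\alpha$ to a smooth map pointwise over $V(\adic C')$, I would localize at each prime $\mathfrak p\in V(\adic C')$. In the local ring $C'_\mathfrak p$ the ideal $\adic C'_\mathfrak p$ lies in the unique maximal ideal, hence in the Jacobson radical. The Artin--Rees hypothesis $\adic^r C\cap\annihil_C(t)=0$ transports to $C'_\mathfrak p$ via flatness of $C\to C'$, and the smooth reductions $\alpha_k$ remain smooth after localization. Lemma~\ref{lemma:adic_smooth_fbr} (applied with $B$ in place of the lemma's $A$ and $C'_\mathfrak p$ in place of its $B$, using that $C'$ is of finite type over $B$) together with Lemma~\ref{lemma:adic_smooth_eq_smooth} then gives smoothness of $B\to C'_\mathfrak p$. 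Since smoothness is Zariski-open for morphisms of finite type, the smooth locus $U_\alpha\subseteq\spec C'$ of $\alpha$ is open and contains $V(\adic C')$; by the same argument, $\beta$ is smooth on an open $U_\beta\subseteq\spec\bar C'$ containing $V(\adic\bar C')$.

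Finally, set $Z=(\spec C'\setminus U_\alpha)\cup (\psi')^{-1}(\spec\bar C'\setminus U_\beta)$, a closed subset of $\spec C'$ disjoint from $V(\adic C')$. Consequently the defining ideal of $Z$ together with $\adic C'$ generates the unit ideal, so one can choose $f\in C'$ vanishing on $Z$ and satisfying $f\equiv 1\pmod{\adic C'}$; both $f$ and its image $\psi'(f)$ are then units modulo $\adic$. Replacing $C'$ by $C'[1/f]$ and $\bar C'$ by $\bar C'[1/\psi'(f)]$ produces further strict étale extensions of $C$ and $\bar C$, $\psi'$ extends to a surjection between them, and the restrictions of $\alpha,\beta$ to these opens are smooth by the choice of $f$. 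The main obstacle is the middle paragraph: Lemmas~\ref{lemma:adic_smooth_fbr} and~\ref{lemma:adic_smooth_eq_smooth} require $\adic$ to lie in the Jacobson radical, which is only guaranteed after localization at a prime containing $\adic$; the local-to-global passage then needs openness of the smooth locus plus a single choice of $f$ to realize the resulting Zariski neighborhood as a strict étale extension compatible on both $C'$ and $\bar C'$.
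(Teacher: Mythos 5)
Your overall strategy is the paper's: first invoke Corollary~\ref{cor:pair_lift} to produce $C'$, $\bar C'$, $\psi'$, $\alpha$, $\beta$, then use Lemmas~\ref{lemma:adic_smooth_fbr} and~\ref{lemma:adic_smooth_eq_smooth} to upgrade $\alpha$ and $\beta$ to smooth maps after one more strictly \'etale localization. Your endgame also works and is only a bookkeeping variant of the paper's: you spread smoothness from $V(\adic C')$ to an open set and pick a single $f\equiv 1 \pmod{\adic C'}$ killing both non-smooth loci through $\psi'$, whereas the paper produces $t\in 1+\adic C'$ with $C'_t$ smooth over $B$ via the ideal $H_{C'/B}$, lifts the analogous $\bar t$ through $\psi'$, and inverts $s=t\bar t'$.

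The genuine problem is the middle paragraph, and it is not only the Jacobson-radical issue you flag. You apply Lemma~\ref{lemma:adic_smooth_fbr} (and then Lemma~\ref{lemma:adic_smooth_eq_smooth}) with target $C'_{\mathfrak p}$, but after localizing at a prime the reductions change: $C'_{\mathfrak p}/\adic^k C'_{\mathfrak p}=(C'/\adic^k C')_{\mathfrak p}$ is only essentially of finite type over $B/\adic^k B$, so the hypothesis that the reductions are finitely generated fails, and ``$\alpha_n$ smooth'' no longer makes sense in the finite-type sense in which those lemmas (and their proofs, via EGA~17.5.2 and the local flatness criterion) are stated. The paper's remedy resolves both difficulties at once: localize at the multiplicative sets $1+\adic B$ and $1+\adic C'$. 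This puts $\adic$ into the Jacobson radical of the target while leaving every reduction $C'/\adic^k C'$ untouched, so the two lemmas apply verbatim and give that $(1+\adic C')^{-1}C'$ is smooth over $B$. Then, since $H$ commutes with this localization and equals the unit ideal for a smooth algebra (Lemma~\ref{lemma:HB_smooth}), some element of $1+\adic C'$ lies in $H_{C'/B}$; this is exactly the statement that the smooth locus of $B\to C'$ contains $V(\adic C')$, after which your choice of $f$ (or the paper's $s=t\bar t'$) finishes the argument. So your proof is repairable, but as written the pointwise localization step does not meet the hypotheses of the lemmas it cites.
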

\begin{proof}
By Corollary~\ref{cor:pair_lift}, we find the homomorphisms $\alpha$ and $\beta$. We will find an element $s\in 1+\adic C'$ such that the compositions
$$
B\stackrel{\alpha}{\longrightarrow} C'\to C'_s
\quad \mbox{ and }\quad
\bar B\stackrel{\beta}{\longrightarrow} \bar C'\to \bar C'_s
$$
are smooth and this will prove the lemma.

Let us consider the case of $\alpha$. We also denote $C'$ by $C$ and $\psi'$ by $\psi$ for short. Let $S = 1 + \adic B$ and $T = 1 + \adic C$, then the induced homomorphism $S^{-1}B\to T^{-1}C$ satisfies the conditions of Lemma~\ref{lemma:adic_smooth_fbr}. Hence, all $\alpha_k\colon B/\adic^k B\to C/\adic^k C$ are smooth. Thus, $T^{-1}C$ is smooth over $S^{-1}B$ by Lemma~\ref{lemma:adic_smooth_eq_smooth}. Consequently, $T^{-1}C$ is $B$ smooth. Now, we see that
$$
T^{-1}(H_{C/B}) = H_{(T^{-1}C)/B} = (1)
$$
by Lemma~\ref{lemma:HB_smooth}. Thus, there is an element $t\in T$, such that
$$
H_{C_t/B} = (H_{C/B})_t = (1).
$$
Hence, $C_t$ is smooth over $B$.

If $\bar t\in 1+\adic \bar C$ denote an element such that $\bar C_{\bar t}$ is $\bar B$-smooth, then we lift it to an element $\bar t'\in 1+\adic C$. Whence, $s=t\bar t'$ is the required element.
\end{proof}

\begin{lemma}\label{lemma:isom_lift_compl}
Let $A$ be a ring, $\adic\subseteq A$ and ideal, and $\varphi\colon A\to A$ is a homomorphism such that $A$ is $\adic$-adically complete, $\varphi$ takes $\adic$ to $\adic$ and induces the identity map on $A/\adic$. Then $\varphi$ is an isomorphism
\end{lemma}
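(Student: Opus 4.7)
My plan is to reduce to the finite quotients $A/\adic^n$ via completeness and induct. Since $\varphi(\adic) \subseteq \adic$, iterating gives $\varphi(\adic^n) \subseteq \adic^n$, so $\varphi$ descends to compatible maps $\varphi_n \colon A/\adic^n \to A/\adic^n$. Completeness gives $A \cong \varprojlim A/\adic^n$ and $\varphi = \varprojlim \varphi_n$, so $\varphi$ is an isomorphism provided each $\varphi_n$ is one. The base case $\varphi_1 = \mathrm{id}_{A/\adic}$ is the hypothesis.

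For the step from $n$ to $n+1$, I would apply the five lemma to the diagram whose rows are two copies of
\[
0 \to \adic^n/\adic^{n+1} \to A/\adic^{n+1} \to A/\adic^n \to 0
\]
connected by vertical maps $\overline\varphi_n$, $\varphi_{n+1}$, $\varphi_n$. The right map is an isomorphism by induction, so the task reduces to showing $\overline\varphi_n \colon \adic^n/\adic^{n+1} \to \adic^n/\adic^{n+1}$ is an isomorphism. For this, observe that the associated graded map $\mathrm{gr}_\adic \varphi$ is a graded ring endomorphism of $\mathrm{gr}_\adic A$ that is the identity in degree zero. Because $\mathrm{gr}_\adic A$ is generated over $A/\adic$ by its degree-one piece $\adic/\adic^2$ via the surjections $(\adic/\adic^2)^{\otimes n} \twoheadrightarrow \adic^n/\adic^{n+1}$, the surjectivity of $\overline\varphi_n$ in every degree reduces to the surjectivity of $\overline\varphi_1\colon \adic/\adic^2 \to \adic/\adic^2$. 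Noetherianity then upgrades surjectivity to isomorphism, since a surjective endomorphism of a finitely generated module over a commutative ring is automatically injective (Vasconcelos).

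The main obstacle is the surjectivity of $\overline\varphi_1$. I would read the hypothesis ``$\varphi$ takes $\adic$ to $\adic$'' in the natural stronger sense $\varphi(\adic) = \adic$ (which is what the lemma requires to be correct, since simple examples such as $\varphi(x) = 2x$ on $\mathbb{Z}[[x]]$ show that $\varphi(\adic) \subseteq \adic$ together with $\varphi \equiv \mathrm{id}\pmod{\adic}$ is not by itself sufficient); under this reading, $\overline\varphi_1$ is surjective by hypothesis. The induction then completes, and the inverse limit of the isomorphisms $\varphi_n$ yields the desired isomorphism $\varphi \colon A \to A$.
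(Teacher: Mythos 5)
Your proof is correct and takes a genuinely different route from the paper's, and your side remark about the hypothesis is itself on target: the example $f(x)\mapsto f(2x)$ on $\mathbb{Z}[[x]]$ with $\adic=(x)$ satisfies the literal hypotheses (complete Noetherian, $\varphi(\adic)\subseteq\adic$, identity on $A/\adic$) and is not surjective, so the lemma as written needs a strengthening. The paper proceeds quite differently: it writes $\varphi=\mathrm{id}+\gamma$ with $\gamma(A)\subseteq\adic$, asserts that $\gamma^n(x)\in\adic^n$, and produces the explicit two-sided inverse $\psi=\sum_n(-1)^n\gamma^n$; your example also defeats that asserted inclusion (there $\gamma(x)=x$, so $\gamma^n(x)=x$), so the series argument implicitly needs $\gamma(\adic)\subseteq\adic^2$, i.e.\ that $\varphi$ induces the identity on $A/\adic^2$ (or on $\adic/\adic^2$) -- which is exactly what is available in every application in the paper, where $\varphi$ agrees with the identity modulo $\adic^{n-2h}$ with $n-2h\geqslant 2$. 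Note this is a different repair from yours: under $\varphi(\adic)=\adic$ alone the series need not converge (take the coordinate swap on $k[[x,y]]$, an automorphism for which $\gamma^n(x)$ does not tend to $0$), whereas your graded argument still applies; in fact your argument only uses surjectivity of $\varphi$ on $\adic/\adic^2$, which follows from either repair, so it proves the more robust statement. As for what each approach buys: the paper's proof, once the right congruence hypothesis is imposed, is a two-line explicit inverse needing no finiteness assumptions; yours trades this for Noetherianity (harmless, given the paper's standing convention) and Vasconcelos' theorem, for which you should add the one-line check that the induced maps on $\adic^n/\adic^{n+1}$ are $A/\adic$-linear -- this holds precisely because the degree-zero part of $\mathrm{gr}_{\adic}\varphi$ is the identity. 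You could also streamline your ending: graded surjectivity plus completeness gives surjectivity of $\varphi$ itself by successive approximation, and a surjective ring endomorphism of a Noetherian ring is automatically injective, which avoids both the five lemma and the passage to the inverse limit.
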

\begin{proof}
By definition, $\varphi(x)=x+\gamma(x)$, where $\gamma\colon A\to \adic$ is a linear map such that $\gamma(xy)=x\gamma(y)+\gamma(x)y+\gamma(x)\gamma(y)$. Hence, for any $x\in \adic$, $\gamma^n(x)\subseteq \adic^n$. Thus the map
$$
\psi(x)=\sum_{n=0}^{\infty}(-1)^{n}\gamma^n(x)
$$
is well-defined and is inverse to $\varphi$.
\end{proof}

\begin{proposition}\label{prop:app_hens}
Let $A$ be a ring $\adic=(t)\subseteq A$ is a principal ideal,
$\phi\colon B\to \bar B$ and $\psi\colon C\to \bar C$ are surjective
$A$-homomorphisms of finitely generated $A$-algebras, and $I$ is the kernel of $\psi$.

Let $n,r,h,c$ be natural numbers such that
\begin{enumerate}
\item $\adic^r C\cap \annihil_C(t)=0$ and $\adic^r\bar C\cap \annihil_{\bar
C}(t)=0$.

\item $\adic^kC\cap I = \adic^{k-c}(\adic^c C\cap I)$ for all $k\geqslant
c$.

\item $\adic^h\subseteq  H_{B/A}$ and $\adic^h\subseteq  H_{\bar
B/A}$.

\item $n>\max(r+2h, c+2h, 4h)$.
\end{enumerate}
and we are given a commutative diagram
$$
\xymatrix{
    {B/\adic^nB}\ar[r]^{\phi_n}\ar[d]^{\alpha_n}&{\bar B/\adic^n \bar B}\ar[d]^{\beta_n}\\
    {C/\adic^n C}\ar[r]^{\psi_n}&{\bar C/\adic^n \bar C}\\
}
$$
where $\alpha_n$ and $\beta_n$ are isomorphisms.
Then there exist $A$-isomorphisms $\alpha^h\colon B^h\to C^h$ and $\beta^h\colon \bar B^h\to \bar C^h$ such that the following diagram is commutative
$$
\xymatrix@!0@R=30pt@C=60pt{
    {}&{C^h}\ar[dd]|!{[d];[dl]}\hole\ar[dl]_{\psi^h}&{}&{B^h}\ar[dd]^(0.4){\alpha_n}\ar[dl]_{\phi^h}\ar@{-->}[ll]_(0.3){\alpha^h}\\
    {\bar C^h}\ar[dd]&{}&{\bar B^h}\ar[dd]^(0.4){\beta_n}\ar@{-->}[ll]_(0.3){\beta^h}&{}\\
    {}&{C/\adic^{n-2h}C}\ar[dl]_{\psi_{n-2h}}&{}&{C/\adic^nC}\ar[dl]_{\psi_n}\ar[ll]|!{[l];[lu]}\hole\\
    {\bar C/\adic^{n-2h}\bar C}&{}&{\bar C/\adic^n \bar C}\ar[ll]&{}\\
}
$$
\end{proposition}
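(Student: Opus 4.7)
The plan is to produce smooth lifts of $\alpha_n$ and $\beta_n$ via Lemma~\ref{lemma:smooth_lifting}, then promote them to isomorphisms on henselizations by combining \'etaleness on a neighborhood of $V(\adic)$ with the rigidity of the henselian property.

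First, since $\alpha_n$ and $\beta_n$ are isomorphisms they are in particular smooth, so I would apply Lemma~\ref{lemma:smooth_lifting} to obtain strict \'etale extensions $C\to C'$ and $\bar C\to \bar C'$, a surjective $A$-homomorphism $\psi'\colon C'\to \bar C'$ compatible with $\psi$, and smooth $A$-algebra maps $\alpha\colon B\to C'$ and $\beta\colon \bar B\to \bar C'$ whose reductions modulo $\adic^{n-2h}$ recover those of $\alpha_n$ and $\beta_n$. Strict \'etale extensions are absorbed by henselization along $\adic$, so $(C')^h=C^h$ and $(\bar C')^h=\bar C^h$, and $\alpha,\beta$ induce homomorphisms $\alpha^h\colon B^h\to C^h$, $\beta^h\colon \bar B^h\to \bar C^h$. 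By functoriality of henselization the entire three-dimensional diagram of the proposition commutes, so the remaining task is to show $\alpha^h$ and $\beta^h$ are isomorphisms.

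I will argue this for $\alpha^h$; the case of $\beta^h$ is the same. Because $\alpha\bmod\adic$ equals the isomorphism $\alpha_1$ (the reduction of $\alpha_n$), the module $\Omega_{C'/B}\otimes_{C'}C'/\adic C'$ vanishes. Smoothness of $\alpha$ with $C'$ finitely generated over $B$ makes $\Omega_{C'/B}$ a finitely generated projective $C'$-module, so its rank vanishes on a Zariski open $U\subseteq \spec C'$ containing $V(\adic C')$. I then pick $f\in C'$ that is a unit modulo $\adic C'$ with $D(f)\subseteq U$: such an $f$ exists because the ideal cutting out the complement of $U$ together with $\adic C'$ generate the unit ideal. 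Then $B\to C'[1/f]$ is \'etale, and since $f$ becomes a unit in any henselization along $\adic$, the canonical map $(C')^h\to (C'[1/f])^h$ is an isomorphism. Hence $\alpha^h$ is \'etale, and its reduction modulo $\adic$ is the isomorphism $\alpha_1$, making $\alpha^h$ a strict \'etale extension of the henselian pair $(B^h,\adic B^h)$.

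By the henselian property of $(B^h,\adic B^h)$, there exists a section $\mu\colon C^h\to B^h$ with $\mu\circ\alpha^h=\mathrm{id}_{B^h}$. The composition $\alpha^h\circ\mu$ is then a $B^h$-algebra endomorphism of $C^h$ whose reduction modulo $\adic$ equals $\alpha_1\circ\alpha_1^{-1}=\mathrm{id}$. The rigidity of maps from an \'etale algebra into a henselian pair — namely, the uniqueness of $B^h$-algebra homomorphisms $C^h\to C^h$ lifting a prescribed reduction modulo $\adic$ — forces $\alpha^h\circ\mu=\mathrm{id}_{C^h}$, so $\alpha^h$ is an isomorphism with inverse $\mu$. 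The main obstacle is the third paragraph, where smoothness of $\alpha$ must be converted into \'etaleness on a Zariski neighborhood of $V(\adic C')$ that survives henselization; the final step, turning the one-sided section into a genuine inverse, is a formal consequence of the henselian property.
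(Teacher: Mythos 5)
Your first step coincides with the paper's: both proofs begin by invoking Lemma~\ref{lemma:smooth_lifting} to replace $C,\bar C$ by strict \'etale extensions and produce smooth lifts $\alpha\colon B\to C'$, $\beta\colon \bar B\to \bar C'$ compatible with $\alpha_{n-2h},\beta_{n-2h}$, and both then reduce to showing that these induce isomorphisms on henselizations. From there you diverge. The paper base changes to $B^h$, uses Theorem~2 of Elkik to lift the \emph{inverse} $\alpha_{n-2h}^{-1}$ to a $B$-algebra retraction $\varepsilon\colon C\to B$, and concludes by noting that $\widehat{\alpha}\circ\widehat{\varepsilon}$ is congruent to the identity modulo $\adic^{n-2h}$, hence an isomorphism by Lemma~\ref{lemma:isom_lift_compl}, together with the injectivity of $C^h\to\widehat{C}$. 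You instead upgrade smoothness to \'etaleness near $V(\adic)$: since $\Omega_{C'/B}\otimes_{C'}C'/\adic C'=\Omega_{(C'/\adic C')/(B/\adic B)}=0$ and $\Omega_{C'/B}$ is finitely generated projective, a localization at an $f\equiv 1 \pmod{\adic C'}$ makes $B\to C'[1/f]$ \'etale without changing the henselization, and then you conclude by the section property of the henselian pair $(B^h,\adic B^h)$ plus rigidity. This is a correct and genuinely different second half; it avoids Elkik's Theorem~2 and all completions, which is a real simplification. Two points need explicit justification, since the paper's definition of a henselian pair only provides sections for \emph{strictly \'etale} (in particular finite type) homomorphisms, while $B^h\to C^h$ is only a filtered colimit of such: to get $\mu\colon C^h\to B^h$ you should either take the section of the strictly \'etale map $B^h\to C'[1/f]\otimes_B B^h$ and extend to $C^h=(C'[1/f])^h$ by the universal property of henselization, or argue level by level in the colimit; and the asserted rigidity (uniqueness of $B^h$-maps $C^h\to C^h$ with prescribed reduction mod $\adic$) requires the standard argument that the agreement locus of two sections of an \'etale map is open and closed, together with the fact that $C^h$ has no nontrivial idempotent congruent to $1$ modulo $\adic C^h$ because $\adic C^h$ lies in the Jacobson radical. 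With those two remarks supplied, your proof is complete, whereas the paper's route has the advantage of staying entirely within results it has already established (Elkik's Theorem~2 and Lemma~\ref{lemma:isom_lift_compl}).
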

\begin{proof}
From Lemma~\ref{lemma:smooth_lifting} it follows that, replacing $C$ and $\bar C$ by their strict etale extensions, we may suppose that we have a pair of smooth homomorphisms $\alpha \colon B\to C$ and $\beta\colon \bar B\to \bar C$ such that the following diagram is commutative
$$
\xymatrix@!0@R=30pt@C=60pt{
    {}&{C}\ar[dd]|!{[d];[dl]}\hole\ar[dl]_{\psi}&{}&{B}\ar[dd]\ar[dl]_{\phi}\ar[ll]_(0.3){\alpha}\\
    {\bar C}\ar[dd]&{}&{\bar B}\ar[dd]\ar[ll]_(0.3){\beta}&{}\\
    {}&{C/\adic^{n-2h}C}\ar[dl]_{\psi_{n-2h}}&{}&{B/\adic^{n-2h}B}\ar[dl]_{\psi_n}\ar[ll]|!{[l];[lu]}\hole_(0.35){\alpha_{n-2h}}\\
    {\bar C/\adic^{n-2h}\bar C}&{}&{\bar B/\adic^{n-2h} \bar B}\ar[ll]_(0.35){\beta_{n-2h}}&{}\\
}
$$
Now, we should show that $\alpha$ and $\beta$ induce isomorphisms of henselizations. We will demonstrate this for $\alpha$.

Firstly, note that all $\alpha_n$ are isomorphisms by Lemma~\ref{lemma:isom_lift_compl}. Then, we may tensor by $B^h$ over $B$. Hence, we may assume that $(B,\adic B)$ is henselian and $C$ is a smooth finitely generated $B$-algebra. Thus, by Theorem~2 of~\cite{Elkik73}, the composition
$$
C\to C/\adic^{n-2h}C \stackrel{\alpha_{n-2h}^{-1}}{\longrightarrow} B/\adic^{n-2h}B
$$
lifts to a $B$-homomorphism $\varepsilon \colon C\to B$. We will show that the induced map $\varepsilon^h\colon C^h\to B^h=B$ is an isomorphism. The surjectivity is clear. Since $C^h$ is a subalgebra of $\widehat{C}$, it is enough to show that $\widehat{\varepsilon}\colon \widehat{C}\to \widehat{B}$ is injective. But
$$
(\widehat{\alpha}\circ\widehat{\varepsilon})_{n-2h} = \alpha_{n-2h}\circ\varepsilon_{n-2h} = Id
$$
Therefore, by Lemma~\ref{lemma:isom_lift_compl}, $\widehat{\alpha}\circ\widehat{\varepsilon}$ is an isomorphism and we are done.
\end{proof}

\subsection{Approximation of complete algebras}

\begin{lemma}\label{lemma:surj_quot}
Let $A$ be a ring, $\adic\subseteq A$ an ideal, and $\varphi \colon
A\to B$ is a homomorphism such that $A$ is $\adic$-adically
complete, $B$ is $\adic B$-adically separated, and the map $\bar
\varphi \colon A/\adic \to B/\adic B$ induced by $\varphi$ is
surjective. Then the map $\varphi$ is surjective.
\end{lemma}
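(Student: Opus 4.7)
The plan is to construct, by successive approximation, a convergent series in $A$ whose image under $\varphi$ equals a given target $b \in B$, using completeness of $A$ to sum it and separatedness of $B$ to verify equality.

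Fix an arbitrary $b \in B$. By the surjectivity of $\bar\varphi$, I can choose $a_0 \in A$ with $b - \varphi(a_0) \in \adic B$. Now I argue by induction, producing elements $a_0, a_1, a_2, \ldots$ with $a_n \in \adic^n$ for $n \geqslant 1$ and
$$
b - \varphi(a_0 + a_1 + \cdots + a_n) \in \adic^{n+1} B.
$$
Assuming this holds through stage $n-1$, write $b - \varphi(a_0 + \cdots + a_{n-1})$ as a finite sum $\sum_j \gamma_j \beta_j$ with $\gamma_j \in \adic^n$ and $\beta_j \in B$. Applying surjectivity of $\bar\varphi$ to each $\beta_j$, I obtain $\delta_j \in A$ with $\beta_j - \varphi(\delta_j) \in \adic B$. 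Setting $a_n := \sum_j \gamma_j \delta_j$ gives $a_n \in \adic^n$, and
$$
\varphi(a_n) = \sum_j \gamma_j \varphi(\delta_j) \equiv \sum_j \gamma_j \beta_j = b - \varphi(a_0 + \cdots + a_{n-1}) \pmod{\adic^{n+1} B},
$$
which advances the induction.

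Since $A$ is $\adic$-adically complete and $a_n \in \adic^n$, the series $a := \sum_{n \geqslant 0} a_n$ converges in $A$, and the partial sums satisfy $a - (a_0 + \cdots + a_n) \in \adic^{n+1}$ (as $\adic^{n+1}$ is closed in the complete ring $A$). Hence $\varphi(a) - \varphi(a_0 + \cdots + a_n) \in \adic^{n+1} B$, and combining with the inductive estimate gives $\varphi(a) - b \in \adic^{n+1} B$ for every $n$. Because $B$ is $\adic B$-adically separated, $\bigcap_n \adic^n B = 0$, so $\varphi(a) = b$.

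There is no substantial obstacle here; the argument is a standard lifting-by-successive-approximation in the style of Hensel or Nakayama. The only point to be slightly careful about is that writing the remainder at stage $n$ as a finite sum $\sum_j \gamma_j \beta_j$ relies only on the definition of $\adic^n B$ as the ideal generated by $\adic^n$ in $B$, so no finite-generation hypothesis on $\adic$ is needed.
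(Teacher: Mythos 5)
Your proof is correct and takes essentially the same route as the paper: a successive-approximation argument that builds a series in $A$, convergent by $\adic$-adic completeness, whose image agrees with the target $b$ modulo $\adic^{n+1}B$ for every $n$, and then concludes $\varphi(a)=b$ from the separatedness of $B$. The only difference is bookkeeping: you write each remainder as a finite sum $\sum_j \gamma_j\beta_j$ with $\gamma_j\in\adic^n$, whereas the paper peels off single factors $s_i\in\adic$ and tracks the products $\prod_i s_i$; your version is, if anything, slightly more careful for a non-principal ideal $\adic$.
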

\begin{proof}
Since $B = \Im \varphi +\adic B$, for every $b\in B$, there exist
$a_1\in A$, $s_1\in \adic$, and $b_1\in B$ such that $b=\varphi(a_1)
+ s_1b_1$. Applying the arguments for $b_1$ instead of $b$ we find
$a_1\in A$, $s_2\in \adic$, and $b_2\in B$ such that
$b_1=\varphi(a_2) + s_2b_2$. Hence
$$
b = \varphi(a_1+s_1a_2) + s_2b_2
$$
By induction, we find sequences $a_i\in A$, $s_i\in \adic$, and
$b_i\in B$. The sequence
$$
r_n = \sum_{k\leqslant n} \left( \prod_{i\leqslant k} s_i\right)a_k
$$
converges to an element $r\in A$ because $A$ is complete. Since $B$
is separated, the difference
$$
b-\varphi(r_n) = \prod_{k\leqslant n+1} s_k b_{n+1}
$$
tends to zero. Therefore, $b = \varphi(r)$.
\end{proof}

\begin{proposition}\label{prop:isom_lift}
Let $A$ be a ring $\adic=(t)\subseteq A$ a principal ideal such that
$A$ is $\adic$-adically complete. Let $\phi\colon B\to \bar B$ and
$\psi\colon B_0\to \bar B_0$ be surjective $A$-homomorphisms of
formally finitely generated $A$-algebras. Let $\varepsilon \colon
A\{X\}\to B$ be a surjective $A$-homomorphism, where $X =
\{x_1,\ldots, x_n\}$ is a finite set, and $J=\ker \varepsilon$,
$\bar J = \ker(\phi\circ \varepsilon)$. Assume that $l,r,c,h,n$ are
natural numbers such that
\begin{enumerate}
\item $\adic^r B\cap \annihil_B(t)=0$ and $\adic^r\bar B\cap \annihil_{\bar
B}(t)=0$.

\item $\adic^kB\cap I = \adic^{k-c}(\adic^c B\cap I)$ for all $k\geqslant
c$.

\item $\adic^h\subseteq  \bar H_{B/A}$ and $\adic^h\subseteq  \bar H_{\bar
B/A}$.

\item $\adic^{d}\{X\}\cap J = \adic^{d-l}(\adic^l\{X\}\cap J)$ and
$\adic^{d}\{X\}\cap \bar J = \adic^{d-l}(\adic^l\{X\}\cap \bar J)$
for all $d\geqslant l$.

\item $n>\max(r+2h, c+2h, l+2h, 4h)$.

\end{enumerate}
and there are isomorphisms $\alpha_n\colon B_0/\adic^{n}B_0\to
B/\adic^n B$ and $\beta_n\colon \bar B_0/\adic^n\bar B_0\to \bar
B/\adic^n \bar B$ such that the following diagram is commutative
$$
\xymatrix{
    {B_0/\adic^{n} B_0}\ar[r]^{\psi_n}\ar[d]^{\alpha_n}&{\bar B_0/\adic^{n}\bar B_0}\ar[d]^{\beta_n}\\
    {B/\adic^{n}B}\ar[r]^{\phi_n}&{\bar B/\adic^{n}\bar B}\\
}
$$
Then, there exist isomorphisms $\alpha\colon B_0\to B$ and
$\beta\colon \bar B_0\to \bar B$ such that the following diagram is
commutative
$$
\xymatrix@!0@R=40pt@C=70pt{
    {}&{B_0}\ar[rr]^(0.6){\psi}\ar@{-->}[dl]_{\alpha}\ar[dd]|!{[dl];[d]}\hole&{}&{\bar B_0}\ar[dd]\ar@{-->}[dl]_{\beta}\\
    {B}\ar[rr]^(0.6){\phi}\ar[dd]&{}&{\bar B}\ar[dd]&{}\\
    {}&{B_0/\adic^{n-2h}B_0}\ar[rr]^(0.4){\psi_{n-2h}}|!{[ru];[r]}\hole\ar[dl]_{\alpha_{n-2h}}&{}&{\bar B_0/\adic^{n-2h}\bar B_0}\ar[dl]_{\beta_{n-2h}}\\
    {B/\adic^{n-2h}B}\ar[rr]^(0.4){\phi_{n-2h}}&{}&{\bar B/\adic^{n-2h}\bar B}&{}\\
}
$$
\end{proposition}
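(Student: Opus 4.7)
The strategy combines Theorem~\ref{theorem:common_alg_lifting}, to produce algebra-homomorphism lifts $\alpha,\beta$ of $\alpha_n,\beta_n$, with Lemma~\ref{lemma:surj_quot} for surjectivity and an Artin--Rees argument on the ideals $J,\bar J$ for injectivity.

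First I apply Theorem~\ref{theorem:common_alg_lifting} with the surjection $\psi\colon B_0\to \bar B_0$ playing the role of the source pair and $\phi\colon B\to\bar B$ (with kernel $I$) playing the role of the target pair. The target conditions --- completeness of $B$, $\adic^r B\cap \annihil_B(t)=0$, and the Artin--Rees equality for $I$ --- are given by hypothesis. The source smoothness conditions $\adic^h\subseteq \bar H_{B_0/A}$ and $\adic^h\subseteq \bar H_{\bar B_0/A}$ are not stated directly for $B_0,\bar B_0$, but they transfer from the assumed inclusions $\adic^h\subseteq \bar H_{B/A}$ and $\adic^h\subseteq \bar H_{\bar B/A}$ by combining the mod-$\adic^n$ isomorphisms $\alpha_n,\beta_n$ (for $n>2h$) with the Artin--Rees hypothesis on $J,\bar J$ and the presentation-level formula of Lemma~\ref{lemma:fHB_computation}. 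This produces $A$-algebra homomorphisms $\alpha\colon B_0\to B$ and $\beta\colon\bar B_0\to\bar B$ that lift $\alpha_{n-2h},\beta_{n-2h}$ and make the required square with $\phi,\psi$ commute.

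Next I use Lemma~\ref{lemma:surj_quot}: since $B_0$ is $\adic$-adically complete, $B$ is $\adic$-adically separated, and the reduction of $\alpha$ modulo $\adic$ agrees with the isomorphism $\alpha_{n-2h}$ modulo $\adic$ (hence is surjective), $\alpha$ is surjective; the same argument yields surjectivity of $\beta$. For injectivity of $\alpha$, I exploit the surjectivity to choose $y_1,\ldots,y_n\in B_0$ with $\alpha(y_i)=\varepsilon(x_i)$ and define $\eta\colon A\{X\}\to B_0$ by $x_i\mapsto y_i$. By Lemma~\ref{lemma:surj_quot} the map $\eta$ is surjective; let $J_0=\ker\eta$, so $B_0=A\{X\}/J_0$. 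The identity $\alpha\circ\eta=\varepsilon$ forces $J_0\subseteq J$ and gives $\ker\alpha\cong J/J_0$. Reducing modulo $\adic^n$ and using that $\alpha_n$ is an isomorphism yields $J+\adic^n A\{X\}=J_0+\adic^n A\{X\}$. For $f\in J$, writing $f=f_0+g$ with $f_0\in J_0\subseteq J$ and $g\in \adic^n A\{X\}$ gives $g\in J\cap\adic^n A\{X\}\subseteq \adic^{n-l}J$ by the Artin--Rees hypothesis on $J$. Hence $J\subseteq J_0+\adic^{n-l}J$, so $J/J_0\subseteq\adic^{n-l}(J/J_0)$. Iterating, $J/J_0\subseteq \bigcap_k \adic^{k(n-l)}B_0=0$ by completeness of $B_0$, so $\alpha$ is injective.

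Finally, the same argument applied to $\bar\eta=\psi\circ\eta\colon A\{X\}\to\bar B_0$ with $\bar J_0=\ker\bar\eta$ and the Artin--Rees hypothesis on $\bar J$ shows $\ker\beta=0$; the commutativity of the diagram in the conclusion is inherited from the lift produced in the first step. The main obstacle is the transfer of the smoothness hypothesis from $B,\bar B$ to $B_0,\bar B_0$ in the first step, since $\bar H$ depends nontrivially on the whole algebra rather than just its mod-$\adic^n$ truncation; everything after that is algebraic bookkeeping with the Artin--Rees technology.
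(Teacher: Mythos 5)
Your proposal follows essentially the same route as the paper's proof: apply Theorem~\ref{theorem:common_alg_lifting} with source pair $(B_0,\bar B_0)$ and target pair $(B,\bar B)$, obtain surjectivity of $\alpha$ and $\beta$ from Lemma~\ref{lemma:surj_quot}, lift the presentation $\varepsilon$ through $\alpha$ to a surjection $A\{X\}\to B_0$ (again surjective by Lemma~\ref{lemma:surj_quot}) with kernel $J_0\subseteq J$, and deduce $J_0=J$ and $\bar J_0=\bar J$ from the truncated comparison together with hypothesis~4; the paper closes with Nakayama where you iterate and use separatedness, which is the same estimate. Two remarks. First, the point you call ``the main obstacle'' --- that Theorem~\ref{theorem:common_alg_lifting} requires $\adic^h\subseteq\bar H_{B_0/A}$ and $\adic^h\subseteq\bar H_{\bar B_0/A}$, i.e. conductor bounds for the pair being lifted, whereas hypothesis~3 is written for $B,\bar B$ --- is not addressed in the paper either: its proof simply invokes the theorem, and in the only application (Proposition~\ref{prop_good_case}) the source algebras are completions of algebras smooth over the complement of $V(\adic)$, so the bound holds for them directly (Lemma~\ref{lemma:H_Bness}) and no transfer is needed. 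Your sketched transfer via Lemma~\ref{lemma:fHB_computation} and Artin--Rees is only an assertion, and with the stated bound $n>2h$ it is not obviously valid; the cleaner fix is to read hypothesis~3 as a condition on the pair to which the lifting theorem is applied, rather than to try to deduce it. Second, a small indexing slip: the homomorphism $\alpha$ produced by Theorem~\ref{theorem:common_alg_lifting} agrees with $\alpha_n$ only modulo $\adic^{n-2h}$, so your kernel comparison should be $J_0+\adic^{n-2h}\{X\}=J+\adic^{n-2h}\{X\}$ (using the isomorphism $\alpha_{n-2h}$), which still suffices since $n>l+2h$; this is exactly how the paper runs the argument, with $m=n-2h$.
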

\begin{proof}
By Theorem~\ref{theorem:common_alg_lifting}, there exist
$A$-homomorphisms $\alpha\colon B_0\to B$ and $\beta\colon \bar
B_0\to \bar B$ such that in the following diagram right cube is
commutative \small
$$
\xymatrix@!0@R=25pt@C=40pt{
    {}&{\bar J_0}\ar[rr]\ar[dl]\ar[dd]|!{[dl];[d]}\hole&{}&{J_0}\ar[rr]\ar[dl]\ar[dd]|!{[dl];[d]}\hole&{}&{A\{X\}}\ar@{-->}[rr]^{\varepsilon_0}\ar@{=}[dl]\ar[dd]|!{[dl];[d]}\hole&{}&{B_0}\ar[rr]^{\psi}\ar[dl]_{\alpha}\ar[dd]|!{[dl];[d]}\hole&{}&{\bar B_0}\ar[dl]_{\beta}\ar[dd]\\
    {\bar J}\ar[rr]\ar[dd]&{}&{J}\ar[rr]\ar[dd]&{}&{A\{X\}}\ar[rr]^(0.7){\varepsilon}\ar[dd]&{}&{B}\ar[rr]^(0.7){\phi}\ar[dd]&{}&{\bar B}\ar[dd]&{}\\
    {}&{\bar J_0/\bar J_0\cap \adic^m\{X\}}\ar[rr]|!{[ru];[r]}\hole\ar[dl]&{}&{J_0/J_0\cap \adic^m\{X\}}\ar[rr]|!{[ru];[r]}\hole\ar[dl]&{}&{A/\adic^m[X]}\ar[rr]|!{[ru];[r]}\hole\ar@{=}[dl]&{}&{B_0/\adic^m B_0}\ar[rr]^(0.4){\psi_m}|!{[ru];[r]}\hole\ar[dl]_{\alpha_m}&{}&{\bar B_0/\adic^m \bar B_0}\ar[dl]_{\beta_m}\\
    {\bar J/\bar J\cap \adic^m\{X\}}\ar[rr]&{}&{J/J\cap \adic^m\{X\}}\ar[rr]&{}&{A/\adic^m[X]}\ar[rr]&{}&{B/\adic^m B}\ar[rr]^{\phi_m}&{}&{\bar B/\adic^m \bar B}&{}\\
}
$$
\normalsize where $m=n-2h$. By Lemma~\ref{lemma:surj_quot}, the map
$\alpha$ is surjective, and we define $\varepsilon_0$ to be a
lifting of $\varepsilon$. Again, by Lemma~\ref{lemma:surj_quot}, the
map $\varepsilon_0$ is surjective, and we define $J_0=\ker
\varepsilon_0$ and $\bar J_0 = \ker(\psi \circ \varepsilon_0)$.
Therefore, we have constructed the diagram above such that it
becomes commutative. By construction, $J_0\subseteq J$ and $\bar
J_0\subseteq \bar J$. Since the left cube of the diagram is
commutative, we have
\begin{align*}
J &= J_0+ J\cap \adic^m\{X\} \subseteq J_0 + \adic J\\
\bar J &= \bar J_0 + \bar J\cap \adic^m\{X\} \subseteq \bar J_0
+\adic \bar J
\end{align*}
The inclusions above hold because of item~4 from the hypothesis of
the proposition. Hence $J=J_0$ and $\bar J= \bar J_0$ by Nakayama's lemma.
Therefore, the maps $\alpha$ and $\beta$ are isomorphisms.
\end{proof}

\section{Algebraization of pairs}

\subsection{Pairs of modules}

\begin{lemma}\label{lemma:duo_lift}
Let $A$ be a ring, $\adic=(t)\subseteq A$ a principal ideal, and
$B\to \bar B\to 0$ is a surjection of $A$-algebras. Let $M$ and $P$
be $B$-modules and ${\bar M}$ and $\bar P$ be $\bar B$-modules such
that
\begin{enumerate}
\item There is an exact sequence of $B$-modules
$$
0\to R\to M\stackrel{\varphi}{\longrightarrow} {\bar M}\to 0;
$$
\item $\pi\colon P\to \bar P$ is a homomorphism of $B$-modules;
\item $M$ is $\adic$-adically complete;
\item $P$ and $\bar P$ are finite modules over $B$ and $\bar B$, respectively.
\end{enumerate}
Suppose that we are given natural numbers $r$, $h$, $c$, and $n$
such that
\begin{enumerate}
\item $\adic ^r M\cap \annihil_M(t)=0$ and $\adic ^{ r}{\bar M}\cap \annihil_{\bar M}(t)=0$;
\item $\adic ^{h}\subseteq H_P\subseteq B$ and $\adic ^{ h}\subseteq H_{\bar P}\subseteq \bar B$;
\item $\adic ^k M\cap R = \adic ^{k-c}(\adic ^c M\cap R)$ for all $k\geqslant c$;
\item $n>\max(r+2h,c+2h,4h)$.
\end{enumerate}
Let we be given the following commutative diagram
$$
\xymatrix{
    {M/\adic ^n M}\ar[r]^{\varphi_n}&{{\bar M}/\adic ^n {\bar M}}\ar[r]&{0}\\
    {P}\ar[r]^{\pi}\ar[u]^{\alpha_n}&{\bar P}\ar[u]^{\beta_n}&{}\\
}
$$
where $\varphi_n$ is induced by $\varphi$ (all homomorphisms on the
diagram are the homomorphisms of $B$-modules).

Then, there exist homomorphisms $\alpha\colon P\to M$ and
$\beta\colon \bar P\to {\bar M}$ (homomorphisms of $B$-modules) such
that the following diagram is commutative
$$
\xymatrix{
    {P}\ar[dd]_{\alpha_n}\ar[rd]^{\pi}\ar@{-->}[rr]^{\alpha}&{}&{M}\ar[rd]^{\varphi}\ar[dd]|!{[d];[dr]}\hole&{}\\
    {}&{\bar P}\ar[dd]_(.30){\beta_n}\ar@{-->}[rr]^(.30){\beta}&{}&{{\bar M}}\ar[dd]\\
    {M/\adic ^n M}\ar[dr]^{\varphi_n}\ar[rr]|!{[r];[rd]}\hole&{}&{M/\adic ^{n-2h}M}\ar[rd]^{\varphi_{n-2h}}&{}\\
    {}&{{\bar M}/\adic ^n {\bar M}}\ar[rr]&{}&{{\bar M}/\adic ^{n-2h}{\bar M}}\\
}
$$
\end{lemma}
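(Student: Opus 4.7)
The argument follows the pattern of the proof of Theorem~\ref{theorem:common_alg_lifting}, with module liftings replacing algebra liftings and with $P, \bar P$ playing the role that $\Omega^s_{B/A}$ played there. Since no passage through K\"ahler differentials is needed, the argument is somewhat more direct. The key technical tool is Lemma~\ref{lemma_duo_lift}, which is engineered precisely to lift a homomorphism through a short exact sequence of modules at the cost of a precision loss of $h$.

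I begin by separately lifting $\alpha_n$ and $\beta_n$ via Lemma~\ref{lemma_step_lift}. Taking $k = 2n-h$ (so that $k-h = 2(n-h)$ and the condition $k > n+h$ reduces to $n > 2h$, which holds), this produces
$$
\gamma \colon P \to M/\adic^{2(n-h)} M, \qquad \delta \colon \bar P \to \bar M/\adic^{2(n-h)} \bar M
$$
lifting $\alpha_n$ and $\beta_n$ modulo $\adic^{n-h}$ respectively; the remaining hypotheses $\adic^h \subseteq H_P, H_{\bar P}$ and $n > r+h$ are provided. Because the original square at level $n$ commutes and $\gamma, \delta$ agree with $\alpha_n, \beta_n$ modulo $\adic^{n-h}$, the difference $e := \varphi_{2(n-h)} \circ \gamma - \delta \circ \pi$ vanishes modulo $\adic^{n-h}$. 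Hence $e$ factors as $e = j \circ d$, where $j$ is the inclusion $\adic^{n-h}\bar M/\adic^{2(n-h)} \bar M \hookrightarrow \bar M/\adic^{2(n-h)} \bar M$ and $d \colon P \to \adic^{n-h}\bar M/\adic^{2(n-h)} \bar M$ is a well-defined $B$-linear map. I now invoke Lemma~\ref{lemma_duo_lift} (applied to the exact sequence $0 \to R \to M \to \bar M \to 0$ and the module $P$) with parameters $(n-h, 2(n-h))$ in place of the lemma's $(n, k)$; its hypotheses $n-h > c+h$, $n-h > r+h$, $2(n-h) > n-h$, and $\adic^h \subseteq H_P$ all follow from our assumptions. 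This yields $d' \colon P \to \adic^{n-2h} M/\adic^{2n-3h} M$ whose composition with $\varphi$ recovers $d$ after the natural projections. Setting
$$
\alpha' := \nu \circ \gamma - i \circ d' \colon P \to M/\adic^{2(n-2h)} M, \qquad \beta' := \bar\nu \circ \delta \colon \bar P \to \bar M/\adic^{2(n-2h)} \bar M,
$$
where $\nu, \bar\nu$ are the natural projections to the coarser level $\adic^{2(n-2h)}$ and $i$ is the natural inclusion, a direct diagram chase yields $\varphi_{2(n-2h)} \circ \alpha' = \beta' \circ \pi$; moreover, $\alpha'$ and $\beta'$ still lift $\alpha_n$ and $\beta_n$ modulo $\adic^{n-2h}$, since the correction term $i \circ d'$ vanishes in $M/\adic^{n-2h} M$.

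Since $2(n-2h) > n$ (using $n > 4h$), I iterate the construction with $(\alpha', \beta')$ in place of $(\alpha_n, \beta_n)$; the precision level strictly grows at each step. By the $\adic$-adic completeness of $M$ (and the inherited completeness of $\bar M = M/R$, for which the Artin--Rees type condition on $R$ forces $R$ to be closed in $M$), the resulting sequences of successive approximations converge to the desired $\alpha \colon P \to M$ and $\beta \colon \bar P \to \bar M$ with $\varphi \circ \alpha = \beta \circ \pi$ and the required congruences modulo $\adic^{n-2h}$. The main difficulty is the correction step: Lemma~\ref{lemma_duo_lift} must simultaneously match obstructions on both sides of the exact sequence $0 \to R \to M \to \bar M \to 0$, which is why the symmetric hypotheses $\adic^h \subseteq H_P$ and $\adic^h \subseteq H_{\bar P}$, together with the Artin--Rees parameters $r$ and $c$, are all indispensable.
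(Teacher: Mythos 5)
Your proposal is correct and follows essentially the same route as the paper's own proof: an independent lifting of $\alpha_n$ and $\beta_n$ by Lemma~\ref{lemma_step_lift}, the observation that the defect $\varphi\circ\gamma-\delta\circ\pi$ factors through the submodule $\adic^{n-h}\bar M/\adic^{2(n-h)}\bar M$, a correction of the first lift obtained from Lemma~\ref{lemma_duo_lift} at the cost of another $h$, and iteration using completeness of $M$ and $\bar M$. The only deviations are harmless bookkeeping choices (the paper lifts to level $2n-h$ rather than $2(n-h)$), so nothing further is needed.
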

\begin{proof}
By Lemma~\ref{lemma_step_lift}, one finds homomorphisms
$$
\gamma\colon P\to M/\adic ^{2n-h}M \;\;\mbox{ and } \;\;\delta\colon
\bar P\to M/\adic ^{2n-h}{\bar M}
$$
such that the following diagrams are commutative
$$
\xymatrix{
    {P}\ar@{-->}[r]^-{\gamma}\ar[d]_{\alpha_n}\ar[rd]^{\alpha_{n-h}}&{M/\adic ^{2n-h}M}\ar[d]\\
    {M/\adic ^n M}\ar[r]&{M/\adic ^{n-h}M}\\
}\:\:\: \xymatrix{
    {\bar P}\ar@{-->}[r]^-{\delta}\ar[d]_{\beta_n}\ar[rd]^{\beta_{n-h}}&{{\bar M}/\adic ^{2n-h}{\bar M}}\ar[d]\\
    {{\bar M}/\adic ^n {\bar M}}\ar[r]&{{\bar M}/\adic ^{n-h}{\bar M}}\\
}
$$
Where, $\alpha_{n-h}$ and $\beta_{n-h}$ are the compositions of
$\alpha_n$ and $\beta_n$ with the corresponding quotient map,
respectively.

We will draw everything on the following diagram
$$
\xymatrix@!0@R=40pt@C=50pt{
    {\adic ^{n-2h}M/\adic ^{2(n-2h)}M}\ar[rrr]\ar[rrd]^{i}&&&{\adic ^{n-2h}{\bar M}/\adic ^{2(n-2h)}{\bar M}}\ar[rrd]&&&&\\
    &&{M/\adic ^{2(n-2h)}M}\ar[rrr]^{\varphi_{2(n-2h)}}\ar[rrd]&&&{{\bar M}/\adic ^{2(n-2h)}{\bar M}}\ar[rrd]&&\\
    &&&&{M/\adic ^{n-2h}M}\ar[rrr]^{\varphi_{n-2h}}&&&{{\bar M}/\adic ^{n-2h}{\bar M}}\\
    {\adic ^{n-h} M/\adic ^{2n-h}M}\ar[rrr]|!{[rr];[rrd]}\hole\ar[uuu]\ar[rrd]&&&{\adic ^{n-h}{\bar M}/\adic ^{2n-h}{\bar M}}\ar[rrd]_(0.30){j}|!{[r];[rd]}\hole\ar[uuu]|!{[uul];[uuulll]}\hole|!{[uu];[uur]}\hole&&&&\\
    &&{M/\adic ^{2n-h}M}\ar[rrr]^{\varphi_{2n-h}}|!{[rr];[rrd]}\hole\ar[uuu]^{\nu}\ar[rrd]&&&{{\bar M}/\adic ^{2n-h}{\bar M}}\ar[rrd]\ar[uuu]^{\bar \nu}|!{[uu];[uur]}\hole&&\\
    &&&&{M/\adic ^{n-h} M}\ar[rrr]^{\varphi_{n-h}}|!{[r];[rd]}\hole\ar[uuu]&&&{{\bar M}/\adic ^{n-h} {\bar M}}\ar[uuu]\\
    &&{P}\ar[rrr]^{\pi}\ar@{-->}[uu]^(.65){\gamma}\ar[rru]^{\alpha_{n-h}}\ar@{..>}[ruuu]_<<<<<<<<<<{\xi}\ar@{..>}[lluuuuuu]^<<<<<<<<<<{\eta}&&&{\bar P}\ar@{-->}[uu]^(.65){\delta}\ar[rru]^{\beta_{n-h}}&&\\
}
$$
On this diagram $i$, $j$, $\nu$, $\bar \nu$ are the natural maps. We
will produce all dotted arrows step by step.

Note, that the diagram is commutative only for solid arrows. By
construction, the homomorphism
$$
\varphi_{2n-h}\circ\gamma - \delta\circ\pi
$$
factors through $j$, that is, equals $j\circ \xi$ for some
$$
\xi\colon P\to \adic ^{n-h}{\bar M}/\adic ^{2n-h}{\bar M}.
$$
Then, by Lemma~\ref{lemma_duo_lift}, we find the homomorphism
$$
\eta\colon P\to \adic ^{n-2h}M/\adic ^{2(n-2h)}M.
$$
Thus, homomorphisms
$$
\nu\circ\gamma-i\circ\eta\colon P\to M/\adic ^{2(n-2h)}M
$$
and
$$
\bar\nu\circ\delta\colon \bar P\to {\bar M}/\adic ^{2(n-2h)}{\bar M}
$$
give the required lifting. Now, since $M$ and, hence, ${\bar M}$ are
$\adic $-adically complete, the result follows by induction on $n$.

\end{proof}

\begin{theorem}\label{theorem:duo_mod_algeb}

Let $A$ be a ring, $\adic=(t)\subseteq A$ a principal ideal, we are
given a surjection $B\to \bar B$ of $A$-algebras, $Q$ is a
$\widehat{B}$-module, and $\bar Q$ is a $\widehat{\bar B}$-modules
such that
\begin{enumerate}
\item The pairs $(B, \adic B)$ and $(\bar B, \adic\bar B)$ are
henselian;
\item $\phi\colon Q\to \bar Q$ is a surjective homomorphism of
$\widehat{B}$-modules;
\item $Q_{t}$ is $\widehat{B}_t$-projective;
\item $\bar Q_{t}$ is $\widehat{\bar B}_t$-projective;
\item $Q$ is a finite $\widehat{B}$-module.
\end{enumerate}

Then, there exist a $B$-module $P_0$, a $\bar B$-module $\bar P_0$,
a surjection $\phi_0\colon P_0\to\bar P_0$, and isomorphisms
$\varphi\colon Q\to \widehat{ P}_0$ and $\psi\colon \bar Q\to
\widehat{\bar P}_0$ such that the following diagram is commutative
$$
\xymatrix{
    {Q}\ar[r]^{\phi}\ar[d]^{\varphi}&{\bar Q}\ar[r]\ar[d]^{\psi}&{0}\\
    {\widehat{P_0}}\ar[r]^{\widehat{\phi}_0}&{\widehat{\bar P_0}}\ar[r]&{0}\\
}
$$
\end{theorem}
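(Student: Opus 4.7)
The plan is to adapt the standard Elkik-style algebraization of a finite module (the analogue of Theorem~3 of~\cite{Elkik73}) to pairs of modules, using the pair-lifting Lemma~\ref{lemma:duo_lift} in place of the single-module lifting.

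First, I would choose generators $e_1,\ldots,e_n$ of $Q$ over $\widehat B$; then $\phi(e_i)$ generate $\bar Q$ over $\widehat{\bar B}$. Noetherianness of the two completions yields compatible presentations
\[
\widehat B^m\xrightarrow{L}\widehat B^n\to Q\to 0,\qquad \widehat{\bar B}^{m+k}\xrightarrow{\bar L}\widehat{\bar B}^n\to \bar Q\to 0,
\]
arranged so that the first $m$ columns of $\bar L$ are the reductions of the columns of $L$; this is possible because those reductions already lie in $\ker(\widehat{\bar B}^n\to \bar Q)$, and only $k$ additional generators are needed. Projectivity of $Q_t$ and $\bar Q_t$, via Proposition~\ref{lemma:H_P_proj}, provides $h$ with $\adic^h\subseteq H_Q$ and $\adic^h\subseteq H_{\bar Q}$, and Artin--Rees produces $r,c$ matching the numerical hypotheses of Lemma~\ref{lemma:duo_lift} with $R=\ker\phi$.

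Next, I would fix $N$ large and, using the identity $B/\adic^N B=\widehat B/\adic^N\widehat B$, lift the entries of $L$ to a matrix $L^0\colon B^m\to B^n$ with $L^0\equiv L\pmod{\adic^N}$; the extra $k$ columns of $\bar L$ lift analogously to columns over $\bar B$, and combined with the reduction of $L^0$ they assemble into $\bar L^0\colon \bar B^{m+k}\to \bar B^n$. Setting $P_0:=B^n/\operatorname{im}(L^0)$ and $\bar P_0:=\bar B^n/\operatorname{im}(\bar L^0)$, the construction yields a canonical surjection $\phi_0\colon P_0\to \bar P_0$ induced by $B\to \bar B$, together with canonical isomorphisms
\[
\alpha_N\colon P_0/\adic^N P_0\xrightarrow{\sim}Q/\adic^N Q,\qquad \beta_N\colon \bar P_0/\adic^N\bar P_0\xrightarrow{\sim}\bar Q/\adic^N\bar Q
\]
compatible with $\phi_0$ and $\phi$. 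Applying Lemma~\ref{lemma:duo_lift} with $M=Q$, $\bar M=\bar Q$, $P=P_0$, $\bar P=\bar P_0$ and the data $(\alpha_N,\beta_N)$ produces $B$-linear maps $\alpha\colon P_0\to Q$ and $\beta\colon \bar P_0\to \bar Q$ commuting with $\phi_0$ and $\phi$. Continuity extends these to $\widehat\alpha\colon \widehat{P_0}\to Q$ and $\widehat\beta\colon \widehat{\bar P_0}\to \bar Q$, which agree with $\alpha_N,\beta_N$ modulo $\adic^{N-2h}$. Surjectivity of $\widehat\alpha$ and $\widehat\beta$ then follows by Nakayama; injectivity follows because $\widehat{P_0}$ and $Q$ become $\widehat B_t$-projective of the same rank after inverting $t$, so the kernels are $\adic$-power torsion and are forced to vanish by combining the $\adic^{N-2h}$-closeness with Artin--Rees.

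The main obstacle is verifying the hypotheses $\adic^h\subseteq H_{P_0}$ and $\adic^h\subseteq H_{\bar P_0}$ demanded by Lemma~\ref{lemma:duo_lift}, since a priori only $Q_t$ and $\bar Q_t$ are known to be projective. The resolution is Fitting-ideal stability: the relevant Fitting ideals of $L$ contain a fixed power of $t$ (and the higher ones vanish over $\widehat B_t$), and the same inclusions hold for $L^0$ provided $L^0\equiv L$ modulo a sufficiently high power of $\adic$. Choosing $N$ large enough at the outset guarantees that $(P_0)_t$ is $B_t$-projective with $\adic^{h'}\subseteq H_{P_0}$ for some $h'$ (and likewise for $\bar P_0$); after possibly enlarging $N$ to accommodate $h'$, the lifting construction above goes through and yields the required commutative square of isomorphisms.
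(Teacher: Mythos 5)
There is a genuine gap, and it sits exactly where your proposal departs from the paper: the verification that the algebraized modules have a \emph{bounded} conductor, i.e.\ that $\adic^{h}\subseteq H_{P_0}$ and $\adic^{h}\subseteq H_{\bar P_0}$ with $h$ fixed \emph{before} $N$ is chosen. Your ``Fitting-ideal stability'' does not deliver this. Projectivity of $(P_0)_t$ and a conductor bound are not conditions of the form ``a Fitting ideal contains $t^{h}$'' alone; they also involve vanishing/splitting conditions, and these are not preserved when you replace $L$ by an arbitrary lift $L^0\equiv L\pmod{\adic^N}$ (even the containments only give elements of the form $t^{h}+t^{N}v$, which need not generate $(t^h)$ in $B_t$). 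Worse, the dependence is circular: the conductor $h'$ of $\operatorname{coker}(L^0)$ can grow with $N$, so ``enlarging $N$ to accommodate $h'$'' never terminates. A toy example shows the phenomenon is real: over $\widehat B=k[[t]]$ present $Q=\widehat B$ by the $1\times 1$ zero matrix and lift it to $L^0=(t^N)$; then $P_0=B/t^NB$, whose conductor is $N$ and whose completion is not $Q$, even though $P_0/\adic^NP_0\cong Q/\adic^NQ$ and $(P_0)_t$ is projective. So an uncontrolled lift of the presentation matrix can destroy the conclusion outright, and your final step (injectivity of $\widehat\alpha$, argued by ``same rank after inverting $t$'' plus Artin--Rees) presupposes precisely the control that is missing. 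It is also telling that your argument never uses the henselianness of $(B,\adic B)$ and $(\bar B,\adic\bar B)$, which is where the real work happens.

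The paper avoids this trap by splitting the problem. First it invokes Theorem~3 of~\cite{Elkik73} to algebraize $Q$ and $\bar Q$ \emph{separately}; that theorem (whose proof approximates the presentation matrix together with explicit witnesses of projectivity over the henselian pair) is what produces $P$, $\bar P$ with $\widehat P\cong Q$, $\widehat{\bar P}\cong\bar Q$ and honest bounds $\adic^h\subseteq H_P$, $\adic^h\subseteq H_{\bar P}$. What then remains is only to algebraize the \emph{map} $\phi$: the compatible pairs of matrices $(u_0,v_0)$ are parametrized by the algebra $D=\bar B[X,Y]/(XL-\bar LY)$, which by Lemma~\ref{lemma:alg_D_poly} is locally a polynomial ring over $\bar B_t$, hence smooth away from $V(\adic)$; Theorem~2~bis of~\cite{Elkik73} (using henselianness) approximates the section given by $(u,v)$ by an algebraic one, Nakayama plus faithful flatness of $B\to\widehat B$ gives surjectivity of $u_0$ and hence of $\phi_0\colon P\to\bar P$, and only then does Lemma~\ref{lemma:duo_lift} enter, producing endomorphisms of $\widehat P$ and $\widehat{\bar P}$ congruent to the identity modulo $\adic^{n-2h}$, which are therefore isomorphisms. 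To repair your argument you would essentially have to reprove Elkik's Theorem~3 for pairs by carrying along projectivity witnesses for both matrices, which is a substantially different (and harder) route than the truncation you propose.
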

\begin{proof}
By Theorem~3 of~\cite{Elkik73}, we can algebraize $Q$ and $\bar Q$
separately. So, we may suppose that $Q=\widehat{P}$, $\bar Q =
\widehat{\bar P}$, where $P$ is a finite $B$-module, $\bar P$ is a
finite $\bar B$-module, and, for some natural $h$, we have
$$
\adic ^h\subseteq H_P\subseteq B\;\; \mbox{ and }\;\; \adic
^h\subseteq H_{\bar P}\subseteq \bar B.
$$

We take some free resolutions of $P$ and $\bar P$ as follows
$$
\xymatrix{
    {B^p}\ar[r]^{L}&{B^q}\ar[r]&{P}\ar[r]&{0}\\
    {\bar B^{\bar p}}\ar[r]^{\bar L}&{\bar B^{\bar q}}\ar[r]&{\bar P}\ar[r]&{0}\\
}
$$
Then the homomorphism $\phi$ induces the homomorphisms $u$ and $v$
on the following diagram
$$
\xymatrix{
    {0}&{0}&{}\\
    {\widehat{P}}\ar[u]\ar[r]^{\phi}&{\widehat{\bar P}}\ar[u]\ar[r]&{0}\\
    {\widehat{B}^q}\ar[u]\ar[r]^{u}&{\widehat{\bar B}^{\bar q}}\ar[u]\ar[r]&{0}\\
    {\widehat{B}^p}\ar[u]^{L}\ar[r]^{v}&{\widehat{\bar B}^{\bar p}}\ar[u]^{\bar L}&{}\\
}
$$
where we may suppose that $u$ is surjective by enlarging the $q$.
Here, $L$ is a $q\times p$-matrix over $B$ and $\bar L$ is $\bar
q\times \bar p$-matrix over $\bar B$.

We define the following algebra
$$
D = \bar B[X,Y]/(XL-\bar LY),
$$
where $X$ is a $\bar q\times q$-matrix of indeterminates $x_{ij}$,
and $Y$ is a $\bar p\times p$-matrix of indeterminates $y_{ij}$.
This algebra describes all pairs of $B$-homomorphisms $u_0$, $v_0$
such that the following diagram is commutative
$$
\xymatrix{
    {B^q}\ar[r]^{u_0}&{\bar B^{\bar q}}\\
    {B^p}\ar[r]^{v_0}\ar[u]^{L}&{\bar B^{\bar p}}\ar[u]^{\bar L}\\
}
$$
The algebra $\bar D=\widehat{\bar B}\otimes_{\bar B} D$ describes
the same pairs of $\widehat{B}$-homomorphisms for free modules over
$\widehat{B}$ and $\widehat{\bar B}$.

The pair $u$ and $v$ defines a $\widehat{\bar B}$-section $\bar D\to
\widehat{\bar B}$. So, we have
$$
\xymatrix{
    {\widehat{\bar B}}\ar[r]&{\bar D}\ar@/_6pt/[l]_{(u,v)}\\
    {\bar B}\ar[r]\ar[u]&{D}\ar[u]\ar@{-->}@/_6pt/[l]_{(u_0,v_0)}\\
}
$$
The algebra $D_t$ is smooth over $\bar B_t$ by
Lemma~\ref{lemma:alg_D_poly} below. By Theorem~2~bis
of~\cite{Elkik73}, there is a section $D\to \bar B$ given by a pair
$(u_0,v_0)$ such that the following diagrams are commutative
$$
\xymatrix@!0@R=25pt@C=50pt{
    {}&{\widehat{\bar B}^{\bar q}}\ar[rd]&{}\\
    {\widehat{B}^q}\ar[rd]_{u}\ar[ru]^{\widehat{u_0}}&{}&{\left(\widehat{\bar B}/\adic^n \widehat{\bar B}\right)^{\bar q}}\\
    {}&{\widehat{\bar B}^{\bar q}}\ar[ru]&{}\\
}
    \:\:\:\:
\xymatrix@!0@R=25pt@C=50pt{
    {}&{\widehat{\bar B}^{\bar p}}\ar[rd]&{}\\
    {\widehat{B}^p}\ar[rd]_{v}\ar[ru]^{\widehat{v_0}}&{}&{\left(\widehat{\bar B}/\adic^n \widehat{\bar B}\right)^{\bar p}}\\
    {}&{\widehat{\bar B}^{\bar p}}\ar[ru]&{}\\
}
$$
for sufficiently large $n$. In particular,
$$
\Im \widehat{u}_0 + \rad\widehat{\bar B}^{\bar q} = \Im u =
\widehat{\bar B}^{\bar q}
$$
By Nakayama's lemma, $\widehat{u}_0$ is also surjective. Since $B$
is henselian, the completion $\widehat{B}$ is faithfully flat.
Therefore, $u_0$ is also surjective.

In other words, the pair $(u_0,v_0)$ induces a surjective
$B$-homomorphism $P\stackrel{\phi_0}{\longrightarrow}\bar P\to 0$
and, by definition, we have the following commutative diagram
$$
\xymatrix{
    {\widehat{P}}\ar[r]^{\widehat{\phi}_0}\ar[d]^{\varphi_n}&{\widehat{\bar P}}\ar[r]\ar[d]^{\psi_n}&{0}\\
    {\widehat{P}/\adic ^n\widehat{P}}\ar[r]^{\phi_n}&{\widehat{\bar P}/\adic ^n\widehat{\bar P}}\ar[r]&{0}\\
}
$$
where $\phi_n$ is induced by $\phi$, $\varphi_n$ and $\psi_n$ are
the quotient maps.

Now, by Lemma~\ref{lemma:duo_lift}, we can find homomorphisms
$\varphi\colon \widehat{P}\to \widehat{P}$ and $\psi\colon
\widehat{\bar P}\to \widehat{\bar P}$ such that the following
diagram is commutative
$$
\xymatrix{
    {\widehat{P}}\ar[r]^{\phi_0}\ar[d]^{\varphi}&{\widehat{\bar P}}\ar[r]\ar[d]^{\psi}&{0}\\
    {\widehat{P}}\ar[r]^{\phi}&{\widehat{\bar P}}\ar[r]&{0}\\
}
$$
and, by construction, $\varphi$ and $\psi$ equal $\varphi_n$ and
$\psi_n$ modulo $\adic ^{n-2h}$, respectively. Thus they are equal
to the identity maps modulo $\adic ^{n-2h}$ and therefore are also
isomorphisms.
\end{proof}

\begin{lemma}\label{lemma:alg_D_poly}
Let the conditions of Theorem~\ref{theorem:duo_mod_algeb} hold and
the $\bar B$-algebra $D$ is defined as in the proof of the
proposition. Then, for every prime ideal $\mathfrak p\subseteq \bar
B$ not containing $t$, $D_{\mathfrak p}$ is a polynomial ring over
$\bar B_{\mathfrak p}$.
\end{lemma}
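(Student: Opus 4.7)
The plan is to bring both $L$ and $\bar L$ into a block normal form after localizing at $\mathfrak p$, so that the defining equations $XL = \bar L Y$ of $D_{\mathfrak p}$ reduce to a system that either identifies certain entries of $X$ and $Y$ with others or forces them to vanish.

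First, I would exploit that $\bar B_{\mathfrak p}$ is a $\bar B_t$-algebra because $t\notin \mathfrak p$. Combined with Theorem~3 of~\cite{Elkik73} as used in the proof of Theorem~\ref{theorem:duo_mod_algeb}, we have $\adic^h\subseteq H_P$ and $\adic^h\subseteq H_{\bar P}$; by Proposition~\ref{lemma:H_P_proj}, the localized modules $P\otimes_B \bar B_{\mathfrak p}$ and $\bar P_{\mathfrak p}$ are projective $\bar B_{\mathfrak p}$-modules. Since $\bar B_{\mathfrak p}$ is local, both are free.

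Next, I would use freeness of these cokernels to split the presentations over $\bar B_{\mathfrak p}$. The surjection $\bar B_{\mathfrak p}^q \to P\otimes_B \bar B_{\mathfrak p}$ splits, so $\Im L$ is a direct summand of $\bar B_{\mathfrak p}^q$, hence free; the induced surjection $\bar B_{\mathfrak p}^p\to \Im L$ also splits, making $\ker L$ free. An analogous statement holds for $\bar L$. Choosing bases adapted to these splittings, I can arrange $L$ to take the block form $\bigl(\begin{smallmatrix} 0 & I_a \\ 0 & 0 \end{smallmatrix}\bigr)$ and $\bar L$ the form $\bigl(\begin{smallmatrix} 0 & I_{\bar a} \\ 0 & 0 \end{smallmatrix}\bigr)$, where $a$ and $\bar a$ are the ranks of the respective images. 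The base changes are invertible $\bar B_{\mathfrak p}$-linear transformations and induce a $\bar B_{\mathfrak p}$-algebra automorphism of the polynomial ring $\bar B_{\mathfrak p}[X, Y]$, so it suffices to analyze the equation in the new coordinates.

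Finally, partitioning $X$ and $Y$ into corresponding blocks and expanding $XL-\bar L Y$, three of the four resulting block equations force two blocks (one entry of $X$ and one of $Y$) to vanish and identify a third block of $Y$ linearly with a block of $X$, while the fourth is a tautology. The remaining blocks of $X$ and $Y$ then appear as independent indeterminates over $\bar B_{\mathfrak p}$, exhibiting $D_{\mathfrak p}$ as a polynomial ring. The only nontrivial input is the first step---deducing freeness of the two cokernels over $\bar B_{\mathfrak p}$ from projectivity after inverting $t$; the remainder is a mechanical matrix calculation in the block normal form.
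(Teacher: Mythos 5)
Your proposal is correct and follows essentially the same route as the paper: localize at $\mathfrak p$, use projectivity (hence freeness over the local ring $\bar B_{\mathfrak p}$) of the cokernels to split the presentations, bring $L$ and $\bar L$ into block normal form by invertible base changes inducing an automorphism of $\bar B_{\mathfrak p}[X,Y]$, and observe that the transformed relations merely identify or kill some variables, leaving a polynomial ring. The only differences are cosmetic (placement of the identity block and the phrasing of the projectivity input).
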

\begin{proof}

The matrix $L$ also defines the $\bar B$-module $P'=P\otimes_{B}\bar
B$. This module is projective over the complement of $V(\adic)$.
Therefore, we have the following split exact sequence of free $\bar
B_{\mathfrak p}$-modules
$$
0\to K_{\mathfrak p}\to \bar B^{q}_{\mathfrak p}\to P'_{\mathfrak
p}\to 0
$$
where $K$ is the module generated by the columns of $L$. Since
$K_{\mathfrak p}$ is free of some rank $d$ and $\bar B_{\mathfrak
p}$ is local, one can find matrices $U\in \mathrm{GL}_{q}(\bar
B_\mathfrak p)$ and $V\in \mathrm{GL}_p(\bar B_{\mathfrak p})$ such
that the matrix $ULV$ is of the form
$$
\begin{pmatrix}
    E&0\\
    0&0
\end{pmatrix}
$$
where $E$ is the identity matrix of size $d$. By the same arguments,
there exist matrices $\bar U\in \mathrm{GL}_{\bar q}(\bar
B_{\mathfrak p})$ and $\bar V\in \mathrm{GL}_{\bar p}(\bar
B_{\mathfrak p})$ such that the matrix $\bar U\bar L\bar V$ is of
the form
$$
\begin{pmatrix}
    E&0\\
    0&0
\end{pmatrix}
$$
where $E$ is the identity matrix of some size $\bar d$. Now, we
change the generators $X$ and $Y$ by $X'=\bar U X U^{-1}$ and
$Y'={\bar V}^{-1}Y V$, respectively. Then, the algebra $D_{\mathfrak
p}$ is given by
$$
D_{\mathfrak p} = \bar B_{\mathfrak p} [X',Y']/(F)
$$
where $F$ is a $\bar q\times p$-matrix of the form
$$
\begin{pmatrix}
    X'_{d}&0\\
\end{pmatrix}-
\begin{pmatrix}
    Y'_{\bar d}\\
    0
\end{pmatrix}
$$
Here $X'_{d}$ is the matrix consisting of the first $d$ columns of
the matrix $X'$ and $Y'_{\bar d}$ is the matrix consisting of the
first $\bar d$ rows of the matrix $Y'$. Now, the claim is clear.
\end{proof}

\begin{corollary}\label{cor:duo_mod_algeb}

Let $A$ be a ring, $\adic=(t)\subseteq A$ a principal ideal,
$f\colon B\to \bar B$ is a surjection of $A$-algebras, $Q$ is a
$\widehat{B}$-module, and $\bar Q$ is a $\widehat{\bar B}$-modules
such that
\begin{enumerate}
\item $\phi\colon Q\to \bar Q$ is a surjective homomorphism of
$\widehat{B}$-modules;
\item $Q_{t}$ is $\widehat{B}_t$-projective;
\item $\bar Q_{t}$ is $\widehat{\bar B}_t$-projective;
\item $Q$ is a finite $\widehat{B}$-module.
\end{enumerate}

Then, there exists a surjective homomorphism of $A$-algebras
$f'\colon B'\to \bar B'$ such that the following diagram is
commutative
$$
\xymatrix{
    {B'}\ar[r]^{f'}&{\bar B'}\ar[r]&{0}\\
    {B}\ar[r]^{f}\ar[u]&{\bar B}\ar[r]\ar[u]&{0}\\
}
$$
where $B'$ and $\bar B'$ are \'etale extensions of $B$ and $\bar B$,
respectively. There also exist a $B'$-module $P_0$, a $\bar
B'$-module $\bar P_0$, a surjection
$P_0\stackrel{\phi_0}{\longrightarrow}\bar P_0\to 0$, and
isomorphisms $\varphi\colon Q\to \widehat{ P}_0$ and $\psi\colon
\bar Q\to \widehat{\bar P}_0$ such that the following diagram is
commutative
$$
\xymatrix{
    {Q}\ar[r]^{\phi}\ar[d]^{\varphi}&{\bar Q}\ar[r]\ar[d]^{\psi}&{0}\\
    {\widehat{P_0}}\ar[r]^{\widehat{\phi}_0}&{\widehat{\bar P_0}}\ar[r]&{0}\\
}
$$
\end{corollary}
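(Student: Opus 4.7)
The plan is to pass to the henselizations of $(B,\adic)$ and $(\bar B,\adic)$, apply Theorem~\ref{theorem:duo_mod_algeb} in the henselian setting, and then descend the resulting finite data to strictly étale extensions of $B$ and $\bar B$.

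First, I form the henselizations $(B^h,\adic B^h)$ and $(\bar B^h,\adic \bar B^h)$. Since henselization with respect to an ideal commutes with quotients, the surjection $f\colon B\to\bar B$ with kernel $I$ induces a surjection $f^h\colon B^h\twoheadrightarrow B^h/IB^h=\bar B^h$. Completions are preserved under henselization, i.e.\ $\widehat{B^h}=\widehat{B}$ and $\widehat{\bar B^h}=\widehat{\bar B}$, so the data $(Q,\bar Q,\phi)$ automatically satisfies every hypothesis of Theorem~\ref{theorem:duo_mod_algeb} for $f^h$.

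Applying Theorem~\ref{theorem:duo_mod_algeb} to this henselian setup yields a $B^h$-module $\tilde P$, a $\bar B^h$-module $\tilde{\bar P}$, a surjection $\tilde \phi\colon \tilde P\to\tilde{\bar P}$ of $B^h$-modules, and isomorphisms $Q\cong \widehat{\tilde P}$ and $\bar Q\cong \widehat{\tilde{\bar P}}$ compatible with $\phi$ and $\tilde\phi$. Now I descend. Writing $B^h=\varinjlim_\lambda B_\lambda$ as a filtered colimit of strictly étale $B$-algebras, and likewise $\bar B^h=\varinjlim_\mu \bar B_\mu$, the modules $\tilde P$ and $\tilde{\bar P}$ are finitely presented (the proof of Theorem~\ref{theorem:duo_mod_algeb} manufactures them from finite free resolutions of finite modules), and $\tilde\phi$ together with the surjection $f^h$ is finite data. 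Hence there exist strictly étale extensions $B\to B'$ and $\bar B\to \bar B'$, a surjection $f'\colon B'\to \bar B'$ over $f$ realizing $f^h$ at this finite stage, finitely presented modules $P_0$ over $B'$ and $\bar P_0$ over $\bar B'$ whose base changes to $B^h$ recover $\tilde P$ and $\tilde{\bar P}$, and a $B'$-homomorphism $\phi_0\colon P_0\to \bar P_0$ inducing $\tilde\phi$. After enlarging $B'$ and $\bar B'$ if necessary, the cokernel of $\phi_0$ (finitely generated and trivialized after base change to $B^h$) already vanishes, so $\phi_0$ is surjective. The isomorphisms $Q\cong \widehat{P_0}$ and $\bar Q\cong \widehat{\bar P_0}$ then follow from $\widehat{B'}=\widehat{B}$ and $\widehat{\bar B'}=\widehat{\bar B}$.

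The main obstacle is arranging the descent so that a single pair $(B',\bar B')$ of strictly étale extensions carries simultaneously the surjection $f'$, the modules $P_0$ and $\bar P_0$, the map $\phi_0$, and the surjectivity of $\phi_0$, all in a compatible way. Each piece is of finite presentation over the relevant henselization, so the required common stage exists by standard finite-presentation arguments for filtered colimits; the care lies in threading the choices of $B_\lambda$ and $\bar B_\mu$ through each other so that the surjection $f'\colon B'\to \bar B'$ is preserved at every enlargement.
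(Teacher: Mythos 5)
Your argument is correct and is exactly the deduction the paper leaves implicit: Corollary~\ref{cor:duo_mod_algeb} is stated without proof, and the intended route is precisely yours --- henselize the pairs (using that strictly \'etale extensions do not change $B/\adic^nB$, so $\widehat{B^h}=\widehat{B}$, and that $\bar B^h=B^h/IB^h$), apply Theorem~\ref{theorem:duo_mod_algeb} to the Noetherian henselian pairs, and descend the finite data along the filtered colimit of strictly \'etale extensions. The one point you flag as delicate, threading the two colimits, disappears if you simply take $\bar B_\lambda:=B_\lambda\otimes_B\bar B=B_\lambda/IB_\lambda$, so that the surjection $f'$ exists at every stage by construction; with that choice the descent of $\tilde P$, $\tilde{\bar P}$, $\tilde\phi$, the eventual surjectivity of $\phi_0$, and the identification $\widehat{P_0}=P_0\otimes_{B'}\widehat{B'}=\widehat{\tilde P}\cong Q$ go through exactly as you describe.
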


\subsection{Pairs of algebras}

\subsubsection{A particular case}

\begin{proposition}\label{prop_good_case}
Let $A$ be a ring $\adic=(t)\subseteq A$ is a principal ideal, and
we are given the following commutative diagram
$$
\xymatrix@R=15pt{
    {0}\ar[r]&{J}\ar[r]\ar@{^(->}[d]&{\widehat{A}\{X\}}\ar[r]\ar@{=}[d]&{B}\ar[r]\ar@{->>}[d]^{\pi}&{0}\\
    {0}\ar[r]&{\bar J}\ar[r]&{\widehat{A}\{X\}}\ar[r]&{\bar B}\ar[r]&{0}\\
}
$$
where $B$ and $\bar B$ are formally finitely generated
$\widehat{A}$-algebras, $X=\{x_1,\ldots,x_n\}$, $J$ and $\bar J$ the
corresponding ideals, and $\pi$ is surjective. Assume that $B$ and
$\bar B$ are formally smooth over the complement of
$V(\widehat{\adic})$ and additionally we have
\begin{enumerate}
\item $(J/J^2)_t$ is a free $B_t$-module of rank $d$;
\item $(\bar J/\bar J^2)_t$ is a free $\bar B_t$-module of rank $\bar
d$;
\item $(\bar J/\bar J^2)_t = J/J^2\otimes_{B}\bar B_t \oplus D$,
where $D$ is a free $\bar B_t$-module of rank $k = \bar d - d$.
\end{enumerate}
Then there exist a surjective homomorphism of $A$-algebras
$\pi_0\colon D\to \bar D$, and isomorphisms $\varphi\colon B\to
\widehat{D}$ and $\psi\colon \bar B\to \widehat{\bar D}$ such that
$D$ and $\bar D$ are smooth over the complement of $V(\adic)$ and
the following diagram is commutative
$$
\xymatrix{
    &{B}\ar[rr]^{\pi}\ar[dl]_{\varphi}&&{\bar B}\ar[r]\ar[dl]_{\psi}&0\\
    {\widehat{D}}\ar[rr]^{\widehat{\pi_0}}&&{\widehat{\bar D}}\ar[r]&0&\\
    {D}\ar[rr]^{\pi_0}\ar[u]&&{\bar D}\ar[r]\ar[u]&0&\\
}
$$
\end{proposition}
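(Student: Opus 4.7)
The proof follows the strategy of Step~1 of the Introduction, adapted to pairs. The plan is to choose convenient generators of $J$ and $\bar J$ respecting the direct-sum decomposition, approximate them by polynomials in $A[X]$ satisfying approximate ``idempotent-type'' relations and Jacobian conditions, sharpen these approximations to exact solutions using our pair-lifting theorem, and finally identify the resulting $A$-algebras as algebraizations of $B$ and $\bar B$ via Proposition~\ref{prop:isom_lift}.

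First, I would use hypotheses~(1)--(3) to choose generators $g_1,\ldots,g_d \in J$ whose images form a $B_t$-basis of $(J/J^2)_t$, and then extend by $h_1,\ldots,h_k \in \bar J$ so that $g_1,\ldots,g_d,h_1,\ldots,h_k$ induce a $\bar B_t$-basis of $(\bar J/\bar J^2)_t$ compatible with the splitting. Nakayama after inverting $t$ yields $J_t=(g_i)_t$ and $\bar J_t=(g_i,h_j)_t$ inside $\widehat{A}\{X\}_t$. Following verbatim the construction of auxiliary data in the proof of Theorem~\ref{theorem:com_alg_lift_h}, I would then produce elements $e \in J$ and $\bar e \in \bar J$ and a common integer $h$ such that $J=(g_i,e)$, $\bar J=(g_i,h_j,\bar e)$, the idempotent-type relations $e^2-t^h e \in (g_i)$ and $\bar e^2-t^h \bar e \in (g_i,h_j)$ hold, and the Jacobian minor ideals $\triangle^d(g_1,\ldots,g_d)$ and $\triangle^{\bar d}(g_1,\ldots,g_d,h_1,\ldots,h_k)$ both contain $t^h$ modulo the corresponding ideals. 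This organization is precisely what the direct-sum hypothesis~(3) affords: the basis of $(J/J^2)_t$ extends to a basis of $(\bar J/\bar J^2)_t$, and the two Jacobian conditions can be arranged with a common conductor $h$.

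Next, I fix $n$ sufficiently large with respect to $h$ and the relevant Artin--Rees constants for the pair $(J,\bar J)$, and approximate each of $g_i,h_j,e,\bar e$, together with the witness coefficients for the idempotent relations and containments, by polynomials in $A[X]$ modulo $\adic^n\widehat{A}\{X\}$. These relations form a compatible pair of polynomial systems over $A$ which is rig-smooth at the approximate solution by construction. Applying Theorem~\ref{theorem:pair_lift} (our pair-version of Theorem~2 of~\cite{Elkik73}) to this pair of systems, I would adjust the approximations to produce polynomials $g_i^0,h_j^0,e^0,\bar e^0 \in A[X]$ satisfying the exact relations and still approximating the originals modulo $\adic^{n-2h}$.

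Set $D:=A[X]/(g_i^0,e^0)$ and $\bar D := A[X]/(g_i^0,h_j^0,\bar e^0)$ (overloading the notation for the module $D$ from hypothesis~(3)). The exact containment of ideals yields the surjection $\pi_0 \colon D \to \bar D$. The exact idempotent relations together with the Jacobian conditions give $t^h \in H_{(g_i^0,e^0)}$ and analogously for $\bar D$, so by Lemma~\ref{lemma:fsmooth_Elkik} and Lemma~\ref{lemma:HB_smooth} both $D$ and $\bar D$ are smooth over $A$ outside $V(\adic)$. Finally, since by construction the ideals defining $\widehat{D}$ and $B$ (respectively $\widehat{\bar D}$ and $\bar B$) agree modulo $\adic^{n-2h}$ compatibly with the quotient maps, Proposition~\ref{prop:isom_lift} applies and supplies the isomorphisms $\varphi \colon B \to \widehat{D}$ and $\psi \colon \bar B \to \widehat{\bar D}$ fitting into the required commutative diagram. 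I expect the hard part to be the bookkeeping of the middle step: the polynomial system must encode the pair structure (not merely the two algebras separately), and verifying that a single application of Theorem~\ref{theorem:pair_lift} simultaneously sharpens both the $J$-data and the $\bar J$-data depends essentially on hypothesis~(3), which ensures the combined Jacobian remains rig-smooth while accommodating the subsystem coming from $J$.
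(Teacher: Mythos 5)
Your overall architecture (choose good generators, write down idempotent-type and Jacobian relations, approximate, solve exactly by a lifting theorem, then invoke Proposition~\ref{prop:isom_lift}) is the right skeleton, but there is a genuine gap at its core: you assume that the full ideals are generated by a rig-basis plus one idempotent-type element, i.e.\ $J=(g_1,\ldots,g_d,e)$ and $\bar J=(g_i,h_j,\bar e)$ in $\widehat{A}\{X\}$. This is false in general (and your preliminary claim that ``Nakayama after inverting $t$ yields $J_t=(g_i)_t$'' is already incorrect -- even after inverting $t$ one only gets generation up to an idempotent). The freeness hypotheses (1)--(3) control $J$ and $\bar J$ only away from $V(\adic)$; the honest statements are $J_s=(g_1,\ldots,g_d)_s$ and an equality of localizations $J_t=(g_1,\ldots,g_d,e)_t$, while $J$ itself may need many more generators $f_{d+1},\ldots,f_m$ (and similarly for $\bar J$). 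This matters because the conclusion is about the completed algebras, not their rig-fibers: if you set $D=A[X]/(g_i^0,e^0)$, then $\widehat{D}$ is the quotient by $(g_i^0,e^0)\widehat{A}\{X\}$, which differs from $B=\widehat{A}\{X\}/J$ modulo every power of $\adic$ whenever $(g_i,e)\subsetneq J$; so the reduction-mod-$\adic^{n}$ isomorphisms required by Proposition~\ref{prop:isom_lift} simply do not exist, and your final step collapses. The paper's proof avoids this precisely by keeping all generators $f_1,\ldots,f_m$ of $J$ and $g_1,\ldots,g_s$ of $\bar J$ and by adding to the system $\Sigma$ the auxiliary relations $K_j$ and $L_j$, which witness $t^h f_{d+j}\in(f_1,\ldots,f_d,e)$ and $t^h g_{k+j}\in(f_i,g_j,q)$; after approximation these relations guarantee that the full ideals $J^0,\bar J^0$ are still rig-generated by the distinguished elements (so $B^0,\bar B^0$ are rig-smooth) while their reductions agree with those of $J,\bar J$, which is exactly what Proposition~\ref{prop:isom_lift} needs.

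Two further points. First, exact solutions of the system cannot be found in $A[X]$ itself: the lifting theorems produce them only in henselian rings, so the paper works in $A[X]^h$ and then descends at the very end to a strictly \'etale extension $A[X]^0$ of $A[X]$ containing the finitely many elements involved; this descent step is absent from your proposal, and without it you do not obtain finitely generated $A$-algebras. Second, your appeal to Theorem~\ref{theorem:pair_lift} for the approximation step is not set up (there is no surjection of ``solution algebras'' in the needed direction), and it is also unnecessary: the paper encodes both the $J$-data and the $\bar J$-data, with shared variables $F_i$, into a single solution algebra with a section, so the classical Theorem~2~bis of~\cite{Elkik73} suffices there, and the pair-machinery enters only afterwards through Proposition~\ref{prop:isom_lift} (i.e.\ Theorem~\ref{theorem:common_alg_lifting}). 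Repairing your argument essentially amounts to reinstating these ingredients of the paper's proof.
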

\begin{proof}

Firstly, we explicitly choose some ``good'' generators of ideals $J$
and $\bar J$. We take the generators
\begin{align*}
    J &= (f_1,\ldots, f_d,f_{d+1},\ldots, f_m)\\
    \bar J &= (f_1,\ldots, f_m, g_1,\ldots,g_k,g_{k+1},\ldots,g_{s})
\end{align*}
such that the images of $f_1,\ldots, f_d$ form the basis of
$(J/J^2)_t$ and the images of $g_1,\ldots, g_k$ form the basis of
$D$. So, we have
\begin{align*}
    (J/J^2)_t &= (f_1,\ldots, f_d)_t\\
    (\bar J/\bar J^2)_t &=(f_1,\ldots,f_d,g_1,\ldots, g_k)_t
\end{align*}
Therefore, there exist elements $e\in J$ and $q\in \bar J$ and a
natural $h$ such that
\begin{align*}
    J_t &= (f_1,\ldots, f_d, e)_t\\
    \bar J_t &= (f_1,\ldots, f_d,g_1,\ldots, g_k, q)_t
\end{align*}
and
\begin{align*}
    e^2 &= t^h e \pmod{f_1,\ldots,f_d} & \mbox{ in } &\widehat{A}\{X\}\\
    q^2 &= t^h q \pmod{f_1,\ldots,f_d,g_1,\ldots,g_k} & \mbox{ in } &\widehat{A}\{X\}\\
\end{align*}

The condition~(1) implies that there is a natural $h'$ such that
$$
t^{h'}\in \triangle^d(f_1,\ldots,f_d).
$$
Enlarging $h$ or $h'$, we can assume that $h=h'$. The conditions~(2)
and~(3) imply that there exists a natural $h'$ such that
$$
t^{h'}\in\triangle^{d+k}(f_1,\ldots,f_d,g_1,\ldots, g_k).
$$
Again, we can assume that $h=h'$.

Now, we consider the following relations on $f_i,g_i,e,q$:
\begin{align*}
    \Phi_1 &= e^2 - t^h e + a_1 f_1 + \ldots + a_d f_d\\
    \Phi_2 &= q^2 - t^h q + b_1 f_1 + \ldots + b_d f_d +
                            c_1 g_1 + \ldots + c_k g_k\\
    K_j &= t^h f_{d+j} + u_{j1} f_1 + \ldots + u_{jd} f_d + u_i e\\
    L_j &= t^h g_{k+j} + w_{j1} f_1 + \ldots + w_{jd} f_d +
                         v_{j1} g_1 + \ldots + v_{jk} g_k + v_i q
\end{align*}

We take $A[X]^h$, that is, the henselization of $A[X]$ with respect
to ideal $\adic[X]$, and denote by $\adic[X]^h$ the corresponding
ideal in $A[X]^h$. We define the following algebra
$$
D =
A[X]^h[F_i,G_i,E,Q,A_i,B_i,C_i,U_{ij},V_{ij},W_{ij},U_i,V_i]/(\Phi_1,\Phi_2,K_j,L_j)
$$
where we use capital letters to denote the corresponding
indeterminates. The system of equations $\Phi_1,\Phi_2,K_j,L_j$ will
be denoted by $\Sigma$. Also we set
$$
\bar D = \widehat{A}\{X\}\otimes_{A[X]^h} D.
$$

The elements $f_i,g_i,e,q$, etc. give a section $\bar
\varepsilon\colon \bar D\to \widehat{A}\{X\}$. Let us show that
$$
t^{h(m+s-d-k+2)}\in \bar \varepsilon \left(H_{\bar D/
\widehat{A}\{X\}}\right)
$$
For we show that
$$
t^{h(m+s-d-k+2)}\in\bar \varepsilon
\left(\triangle^{m+s-d-k+2}(\Sigma) \right)
$$
Indeed, the Jacobian matrix of $\Sigma$ at $f_i,g_i,e,q$, etc. is
the following

\begin{tabular}{l|ccccccccccccccc}
                &$f_{d+j}$  &$g_{k+j}$  &$e$        &$a_i$ &$q$        &$b_i$   &$c_i$  &other indeterminates\\
                \hline
    $\Phi_1$    &$0$        &$0$        &$2e-t^h$   &$f_i$  &$0$        &$0$    &$0$    &*\\
    $\Phi_2$    &$0$        &$0$        &$0$        &$0$    &$2q-t^h$   &$f_i$  &$g_i$  &*\\
    $K_1$       &$0$        &$0$        &*          &$0$    &$0$        &$0$    &$0$    &*\\
    $K_j$       &$t^h$      &$0$        &*          &$0$    &$0$        &$0$    &$0$    &*\\
    $K_{m-d}$   &$0$        &$0$        &*          &$0$    &$0$        &$0$    &$0$    &*\\
    $L_1$       &$0$        &$0$        &$0$        &$0$    &*          &$0$    &$0$    &*\\
    $L_{j}$     &$0$        &$t^h$      &$0$        &$0$    &*          &$0$    &$0$    &*\\
    $L_{s-k}$   &$0$        &$0$        &$0$        &$0$    &*          &$0$    &$0$    &*\\
\end{tabular}

Therefore, we have the following inclusion
$$
t^{h(m-d)}(2e-t^h, f_i)t^{h(s-k)}(2q-t^h,f_i,g_j)\subseteq
\bar\varepsilon \left(\triangle^{m+s-d-k+2}(\Sigma)\right),
$$
where $1\leqslant i \leqslant d$ and $1\leqslant j\leqslant k$. Now,
by the choice of $e$, we have
$$
(2e-t^h)(2e-t^h) = t^{2h} \pmod{f_1,\ldots,f_d}\; \mbox{ in } \;
\widehat{A}\{X\}
$$
Therefore, $t^{2h}\in (2e-t^h,f_i)$. Absolutely analogous
calculation shows that $t^{2h}\in (2q-t^h,f_i,g_j)$ and we get what
we need.

By Theorem~2~bis of~\cite{Elkik73}, the section $\bar\varepsilon$
can be approximated by a section $\varepsilon\colon D\to A[X]^h$
modulo $\adic^n$ for any sufficiently large $n$. We denote the
corresponding elements by the same letters with upper index $0$,
that is, $f^0_i, g^0_i, e^0,q^0$, etc. and define the following
ideals and algebras
\begin{align*}
    J^0 &= (f^0_1,\ldots,f^0_m,e^0)\subseteq A[x]^h & \bar J^0 &=
    (f^0_1,\ldots,f^0_m,e^0,g^0_1,\ldots,g^0_s,q^0)\subseteq A[x]^h\\
    B^0 &= A[X]^h/J^0  &   \bar B^0 &= A[X]^h/\bar J^0
\end{align*}
Because of the choice of $f^0_i$ and $e^0$, we have
$$
\triangle^{d}(f^0_1,\ldots,f^0_d) = \triangle^{d}(f_1,\ldots,f_d)
\pmod{\widehat{\adic}^n\{X\}}
$$
If $n>h$, then $t^h$ belongs to the left-hand part. Similarly, we
get that $t^h$ belongs to
$\triangle^{d+k}(f^0_1,\ldots,f^0_d,g^0_1,\ldots,g^0_k)$. Because of
$K_j$ and $L_j$, we see that
$$
J^0_t = (f^0_1,\ldots,f^0_d,e^0)_t \; \mbox{ and }\; \bar J^0_t =
(f^0_1,\ldots,f^0_d,g^0_1,\ldots,g^0_k,q^0)_t
$$
Since $e^0/t^h$ is idempotent modulo $(f^0_i)$ and $q^0/t^h$ is
idempotent modulo $(f^0_i,g^0_j)$, the algebras $B^0$ and $\bar B^0$
are smooth over the complement of $V(\adic)$ by the Jacobian
criterion~\cite[Theorem~22.6.1]{EGA1964_20}.

By definition, we have a natural surjection $\pi^0\colon B^0\to \bar
B^0$, and the following diagram is commutative
$$
\xymatrix{
    {B^0}\ar[r]^{\pi^0}\ar[d]^{\alpha_n}&{\bar B^0}\ar[r]\ar[d]^{\bar\alpha_n}&{0}\\
    {B^0/\adic^n B^0}\ar[r]^{\pi^0_n}\ar@{=}[d]&{\bar B^0/\adic^n \bar B^0}\ar[r]\ar@{=}[d]&{0}\\
    {B/\adic^n B}\ar[r]^{\pi_n}&{\bar B/\adic^n \bar B}\ar[r]&{0}\\
}
$$
where $\pi_n$ is induced by $\pi$, $\pi^0_n$ is induced by $\pi^0$,
and the vertical maps $\alpha_n$ and $\bar \alpha_n$ are the
quotient maps. Then, by Proposition~\ref{prop:isom_lift}, there exists isomorphisms $\alpha\colon B^0\to B$ and $\bar \alpha \colon \bar B^0\to \bar B$ such that the following diagram is commutative
$$
\xymatrix@!0@R=40pt@C=70pt{
    {}&{\widehat{B^0}}\ar[rr]^(0.6){\psi}\ar@{-->}[dl]_{\alpha}\ar[dd]|!{[dl];[d]}\hole&{}&{\widehat{\bar B^0}}\ar[dd]\ar@{-->}[dl]_{\beta}\\
    {B}\ar[rr]^(0.6){\phi}\ar[dd]&{}&{\bar B}\ar[dd]&{}\\
    {}&{B_0/\adic^{n-2h}B_0}\ar[rr]^(0.4){\psi_{n-2h}}|!{[ru];[r]}\hole\ar[dl]_{\alpha_{n-2h}}&{}&{\bar B_0/\adic^{n-2h}\bar B_0}\ar[dl]_{\beta_{n-2h}}\\
    {B/\adic^{n-2h}B}\ar[rr]^(0.4){\phi_{n-2h}}&{}&{\bar B/\adic^{n-2h}\bar B}&{}\\
}
$$

Algebras $B^0$ and $\bar B^0$ are quotients of $A[X]^h$ by some
finitely generated ideals. Also, expressing $t^h$ as an element of
the ideals
$$
\triangle^d(f_1^0,\ldots,f_d^0) \quad \mbox{ and } \quad
\triangle^{d+k}(f_1^0,\ldots,f_d^0,g_1^0,\ldots,g_k^0)
$$
we use finitely many elements of $A[x]^h$. So, we can take a
strictly \'etale extension $A[X]^0$ of $A[X]$ containing all these
elements and solutions of $\Sigma$ in $A[x]^h$. Then the required
algebras are
$$
D=A[X]^0/\left(J^0\cap A[X]^0\right) \; \mbox{ and }\; \bar D=
A[X]^0/\left(\bar J^0\cap A[X]^0\right)
$$
and the map $\pi_0\colon D\to \bar D$ is the natural quotient map.

\end{proof}

\subsubsection{Reduction of the general case}

If we are given a ring $A$ with an ideal $\adic$, $B$ is an
$\adic$-adically complete $A$-algebra, and $M$ is a $B$-module. Then
the symmetric algebra of $M$ over $B$ will be denoted by $S_B(M)$.
The completion of $S_B(M)$ in the topology generated by $\adic $
will be denoted by $S_B\{M\}$.

\begin{lemma}\label{lemma:approx_inverse}
Let $A$ be a ring, $\adic\subseteq A$ an ideal, and we are given two
$A$-algebras $B$ and $C$ together with two homomorphisms
$\Phi,\Psi\colon B\to C$ over $A$ such that
\begin{enumerate}
\item $B$ is $\adic B$-adically complete;
\item $C$ is $\adic C$-adically complete;
\item $\Psi$ is an isomorphism;
\item $\Im\gamma\subseteq \adic C$, where $\gamma = \Phi - \Psi$.
\end{enumerate}
Then $\Phi$ is also an isomorphism.
\end{lemma}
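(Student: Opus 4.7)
The plan is to reduce the statement directly to Lemma~\ref{lemma:isom_lift_compl}. Since $\Psi\colon B\to C$ is an isomorphism, we can transport $\Phi$ back to an $A$-algebra endomorphism of $B$ by setting
$$
\varphi = \Psi^{-1}\circ\Phi\colon B\to B.
$$
This $\varphi$ is a homomorphism of $A$-algebras (composition of two such), and if I can show that $\varphi$ takes $\adic B$ into $\adic B$ and induces the identity map on $B/\adic B$, then Lemma~\ref{lemma:isom_lift_compl}, applied to the $\adic B$-adically complete ring $B$ with the ideal $\adic B$, will yield that $\varphi$ is an isomorphism. Then $\Phi = \Psi\circ\varphi$ is a composition of isomorphisms, and we are done.

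First I would check that $\varphi\equiv\mathrm{id}_B\pmod{\adic B}$. For any $x\in B$,
$$
\varphi(x)-x = \Psi^{-1}(\Phi(x)) - \Psi^{-1}(\Psi(x)) = \Psi^{-1}(\gamma(x)).
$$
By hypothesis $\gamma(x)\in \adic C$, and since $\Psi$ is an $A$-algebra isomorphism, $\Psi^{-1}(\adic C)=\adic B$. Hence $\varphi(x)-x\in \adic B$, which is exactly the condition that $\varphi$ induces the identity on $B/\adic B$. In particular, for $a\in \adic$ we have $\varphi(a)=a+(\varphi(a)-a)\in \adic B$, so $\varphi(\adic B)\subseteq \adic B$ (the ideal $\adic B$ is generated by the image of $\adic$).

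The main (and only) step is then to invoke Lemma~\ref{lemma:isom_lift_compl} with the ring $B$ and the ideal $\adic B$, which applies because $B$ is assumed $\adic B$-adically complete and the two hypotheses of the lemma have just been verified. This forces $\varphi$ to be an isomorphism and hence $\Phi$ to be one. There is no real obstacle here; the heart of the argument was already carried out in Lemma~\ref{lemma:isom_lift_compl}, where a Neumann-type series $\sum_n(-1)^n\gamma^n$ produces the inverse in the complete setting, and the present statement is just a convenient symmetric reformulation of that fact.
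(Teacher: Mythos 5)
Your reduction is, in substance, the same computation as the paper's: since $\Psi^{-1}\circ\Phi=\mathrm{id}_B+\Psi^{-1}\circ\gamma$, transporting $\Phi$ to an endomorphism of $B$ and inverting it is exactly the paper's factorization $\Phi=\Psi\circ(\mathrm{id}_B+\Psi^{-1}\circ\gamma)$, where the second factor is inverted by the series $\sum_{n\geqslant 0}(-1)^n(\Psi^{-1}\circ\gamma)^n$. The only real difference is that you delegate the inversion to Lemma~\ref{lemma:isom_lift_compl} instead of running the series directly, and your verification of that lemma's stated hypotheses ($\varphi(\adic B)\subseteq\adic B$, identity on $B/\adic B$) is correct.

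The caveat is that Lemma~\ref{lemma:isom_lift_compl} is not a safe black box here: its stated hypotheses are strictly weaker than what its proof (and conclusion) actually needs. For a mere ring endomorphism congruent to the identity modulo $\adic$, the series argument breaks down because one only gets $\gamma(A)\subseteq\adic$, not $\gamma^n(x)\in\adic^n$; concretely, on $A=k[[x]]$ with $\adic=(x)$ the map $f(x)\mapsto f(x^2)$ preserves $(x)$ and induces the identity on $A/(x)$, yet is not surjective. What rescues your application (and is silently discarded when you pass to the cited lemma) is the $A$-linearity: both $\Phi$ and $\Psi$ are over $A$, so $\gamma'=\Psi^{-1}\circ\Phi-\mathrm{id}_B=\Psi^{-1}\circ\gamma$ is $A$-linear with $\Im\gamma'\subseteq\adic B$, and since $\adic B$ is extended from $A$ this gives $\gamma'(\adic^kB)\subseteq\adic^{k+1}B$, hence $\gamma'^{\,n}(x)\in\adic^nB\to 0$ and the series converges in the complete ring $B$. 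So either add this one-line observation and conclude directly -- at which point you have reproduced the paper's proof -- or make explicit that you are invoking a strengthened form of Lemma~\ref{lemma:isom_lift_compl} in which the relevant linearity (or congruence to the identity modulo a square of the ideal) is assumed; as literally stated, that lemma does not carry the weight you put on it.
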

\begin{proof}
To prove the result, we should show that $\Phi = \Psi + \gamma$ has
an inverse map. We define
$$
\varphi = \Psi^{-1}\circ \gamma\colon B\to B
$$
and we have $\Phi = \Psi\circ (Id_B + \varphi)$. It is enough to
show that $Id_B+\varphi$ is an invertible homomorphism of
$A$-modules. Since $\Psi^{-1}$ is also an $A$-homomorphism, $\Im
\varphi\subseteq \adic B$. The ring $B$ is $\adic B$-adically
complete and, for any element $x\in B$, $\varphi(x)\in \adic B$.
Therefore, the element
$$
\xi(x) = \sum_{n=0}^\infty (-1)^n\varphi^n(x)
$$
is well-defined. Since $\Phi$, $\Psi$, and $\Psi^{-1}$ are
$A$-homomorphisms, they are continuous. Therefore, the map $\varphi$
is also continuous. Now, an easy calculation shows that
$\xi=(Id_B+\varphi)^{-1}$.
\end{proof}

\begin{lemma}\label{lemma:auto_smooth}
Let $A$ be a ring and $\adic\subseteq A$ an ideal, and $B$ is a
finitely generated $A$-algebra such that its completion
$\widehat{B}$ is smooth over the complement of $V(\widehat{\adic})$.
Then there exists an $s\in \adic$ such that $B_{1+s}$ is already
smooth over the complement of $V(\adic)$.
\end{lemma}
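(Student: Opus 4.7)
The plan is to recast the formal smoothness hypothesis on $\widehat{B}$ as a statement about the (finitely many) irreducible components of the non-smooth locus $V(H_{B/A})\subseteq \spec B$, and then choose $s$ to kill those components that lie outside $V(\adic)$.

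First I would translate the hypothesis. By Lemma~\ref{lemma:H_Bness} we have $\bar H_{\widehat{B}/\widehat{A}} = H_{B/A}\widehat{B}$, so the assumption that $\widehat{B}$ is smooth over the complement of $V(\widehat{\adic})$ becomes $\widehat{\adic}\subseteq r(H_{B/A}\widehat{B})$; equivalently, every prime of $\widehat{B}$ containing $H_{B/A}\widehat{B}$ contains $\widehat{\adic}$.

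The key intermediate claim is then: every prime $\mathfrak p\subseteq B$ with $H_{B/A}\subseteq \mathfrak p$ and $\adic\nsubseteq \mathfrak p$ satisfies $\mathfrak p+\adic=B$. To prove this, I would argue by contradiction: if $\mathfrak p+\adic\neq B$, then $\widehat{B}/\mathfrak p\widehat{B}\cong \widehat{B/\mathfrak p}$ is nonzero (its reduction modulo $\adic$ is $B/(\mathfrak p+\adic)\neq 0$), so it has some prime. Since $B\to \widehat{B}$ is flat (hence satisfies going-down), starting from any prime of $\widehat{B}$ above $\mathfrak p\widehat{B}$ one obtains a prime $\widetilde{\mathfrak p}\subseteq \widehat{B}$ with $\widetilde{\mathfrak p}\cap B=\mathfrak p$ and $\widetilde{\mathfrak p}\supseteq \mathfrak p\widehat{B}\supseteq H_{B/A}\widehat{B}$. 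This $\widetilde{\mathfrak p}$ cannot contain $\widehat{\adic}$ (else $\widetilde{\mathfrak p}\cap B=\mathfrak p$ would contain $\adic$), contradicting the translated hypothesis.

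Next I would use Noetherianness: $V(H_{B/A})$ has finitely many irreducible components, corresponding to minimal primes $\mathfrak p_1,\dots,\mathfrak p_n$ over $H_{B/A}$. Reorder so that $\mathfrak p_1,\dots,\mathfrak p_k$ contain $\adic$ and $\mathfrak p_{k+1},\dots,\mathfrak p_n$ do not. By the claim, $\mathfrak p_i+\adic=B$ for $i>k$, and iterating the elementary fact that coprimality of $I_1,I_2$ with $J$ implies coprimality of $I_1\cap I_2$ with $J$, we get $\mathfrak p_{k+1}\cap\cdots\cap\mathfrak p_n+\adic=B$. Write $1=b+s'$ with $b$ in the intersection and $s'\in\adic$, and set $s=-s'\in\adic$. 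Then $1+s=b$ belongs to each $\mathfrak p_i$ for $i>k$, whereas for $i\leq k$ we have $\adic\subseteq \mathfrak p_i$, so $1+s\equiv 1\pmod{\mathfrak p_i}$ and hence $1+s\notin \mathfrak p_i$.

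Localizing, the minimal primes of $H_{B/A}B_{1+s}$ are exactly the images of $\mathfrak p_1,\dots,\mathfrak p_k$, all of which contain $\adic B_{1+s}$. Using that the presentation of Lemma~\ref{lemma:HB_computation} commutes with the localization $B\to B_{1+s}$ (so that $H_{B_{1+s}/A}=H_{B/A}B_{1+s}$) and applying Lemma~\ref{lemma:HB_smooth} to $B_{1+s}$, we obtain $\adic B_{1+s}\subseteq r(H_{B_{1+s}/A})$, which is exactly the statement that $B_{1+s}$ is smooth over the complement of $V(\adic)$. The main technical step is the intermediate claim, which rests on flatness of completion together with the observation that nonvanishing of $\widehat{B/\mathfrak p}$ is equivalent to $\mathfrak p+\adic\neq B$; everything else is standard finite-Noetherian bookkeeping.
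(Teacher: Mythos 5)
Your proof is correct, but it takes a genuinely different route from the paper's. The paper argues with ideals rather than primes: it localizes at $1+\adic B$ so that $B'=B_{1+\adic B}\to\widehat{B}$ becomes faithfully flat, identifies $\bar H_{\widehat{B}/\widehat{A}}$ with $H_{B'/A}\widehat{B}$ through the annihilator presentation of Lemma~\ref{lemma:HB_computation}, contracts to obtain $\adic^h\subseteq H_{B'/A}$, and finally uses finite generation of $\adic^h$ to descend from the filtered colimit $B'$ to a single localization $B_{1+s}$. You instead work on $\spec B$: using Lemma~\ref{lemma:H_Bness} together with going-down for the flat map $B\to\widehat{B}$ (and the observation that $\widehat{B/\mathfrak p}\neq 0$ whenever $\mathfrak p+\adic B\neq B$), you show that every irreducible component of $V(H_{B/A})$ not contained in $V(\adic)$ is in fact disjoint from $V(\adic)$, and then choose $1+s$ vanishing exactly on those components. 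Both arguments rest on the same two pillars, flat base change for the ideal $H$ and flatness of completion; yours replaces the faithful-flatness contraction by a prime-decomposition and going-down argument, which makes the geometry of the non-smooth locus explicit at the cost of a somewhat longer bookkeeping step, while the paper's version is shorter and needs no component decomposition. Two minor remarks: the element you produce lies a priori in $\adic B$ rather than literally in $\adic$, but the paper's own proof (and its later uses of the lemma) has the same harmless looseness; and your identification $H_{B_{1+s}/A}=H_{B/A}B_{1+s}$ deserves the one-line justification that the formula of Lemma~\ref{lemma:HB_computation} involves only finite modules over a Noetherian ring and localization is flat, which is exactly the argument used in Lemma~\ref{lemma:H_Bness}.
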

\begin{proof}

We represent algebra $B$ as the following quotient $A[x]/J$, where
$x=\{x_1,\ldots,x_n\}$ is a finite set of indeterminates. Let us
denote the localization $B_{1+\adic B}$ by $B'$ and $J_{1+\adic B}$
by $J'$. Then we have the following sequence of homomorphisms $B\to
B'\to \widehat{B}$, and the following exact sequence
$$
0\to \widehat{J'}\to \widehat{A}\{x\}\to \widehat{B}\to 0
$$
Thus, by Lemmas~\ref{lemma:HB_computation}
and~\ref{lemma:fHB_computation}, we have
\begin{align*}
  H_{B'/A}&=\annihil_B\left(\hom_{B'}\left(J'/J'^2,J'/J'^2\right)/\hom_{A[x]}\left(\Omega_{A[x]/A},J'/J'^2\right)\right)\\
  \bar H_{\widehat{B}/\widehat{A}}&=\annihil_{\widehat{B}}\left(\hom_{\widehat{B}}\left(\widehat{J'}/\widehat{J'}^2,\widehat{J'}/\widehat{J'}^2\right)/\hom_{\widehat{A}\{x\}}\left(\Omega^s_{\widehat{A}\{x\}/\widehat{A}},\widehat{J'}/\widehat{J'}^2\right)\right)
\end{align*}
Since $\widehat{B}=\widehat{B'}$ is a faithfully flat $B'$-module
and the modules $J'/J'^2$ and $\Omega_{A[x]/A}$ are finitely
generated, we have
$$
\bar H_{\widehat{B}/\widehat{A}} = H_{B'/A} \otimes_{B'}\widehat{B}
= H_{B'/A}\widehat{B}
$$
and
$$
B'\cap \bar H_{\widehat{B}/\widehat{A}} = H_{B'/A}.
$$
Formal smoothness of $\widehat{B}$ over the complement of
$V(\widehat{\adic})$ means that, for some $h$, $\adic^h\subseteq
\bar H_{\widehat{B}/\widehat{A}}$. Therefore, $\adic^h\subseteq
H_{B'/A}$. The ideal $\adic^h$ is finitely generated, hence, there
is an element $s\in \adic$ such that $\adic^h\subseteq
H_{B_{1+s}/A}$.
\end{proof}

\begin{lemma}\label{lemma_reduction}
Let $A$ be a ring, $\adic=(t)\subseteq A$ a principal ideal, and we
are given a surjective homomorphism $B\to \bar B$ of
$\adic$-adically complete $\widehat{A}$-algebras. Let $P$ and $\bar
P$ be finite modules over $B$ and $\bar B$, respectively, such that
\begin{enumerate}
\item There is a surjective homomorphism of $B$-modules $P\to \bar
P$;
\item $P_t$ is $B_t$-projective;
\item $\bar P_t$ is $\bar B_t$-projective.
\end{enumerate}
Suppose that there exist a surjective homomorphism of finitely
generated $A$-algebras $C\to \bar C$ together with isomorphisms
$\varphi' \colon S_B\{P\}\to \widehat C$ and $\psi'\colon S_{\bar
B}\{\bar P\}\to \bar C$ such that $C_t$ and $\bar C_t$ are
$A_t$-smooth and the following diagram is commutative
$$
\xymatrix{
    &{S_{B}\{P\}}\ar[rr]\ar[dl]_{\varphi'}&&{S_{\bar B}\{\bar P\}}\ar[r]\ar[dl]_{\psi'}&0\\
    {\widehat{C}}\ar[rr]&&{\widehat{\bar C}}\ar[r]&0&\\
    {C}\ar[rr]\ar[u]&&{\bar C}\ar[r]\ar[u]&0&\\
}
$$
Then there exist a surjective homomorphism of finitely generated
$A$-algebras $D\to \bar D$ together with isomorphisms $\varphi
\colon B\to \widehat D$ and $\psi\colon \bar B\to \widehat{\bar D}$
such that $D_t$ and $\bar D_t$ are $A_t$-smooth and the following
diagram is commutative
$$
\xymatrix{
    &{B}\ar[rr]\ar[dl]_{\varphi}&&{\bar B}\ar[r]\ar[dl]_{\psi}&0\\
    {\widehat{D}}\ar[rr]&&{\widehat{\bar D}}\ar[r]&0&\\
    {D}\ar[rr]\ar[u]&&{\bar D}\ar[r]\ar[u]&0&\\
}
$$
If additionally, $B$ and $\bar B$ are formally smooth over the
complement of $V(\widehat{\adic})$, the algebras $D_t$ and $\bar
D_t$ are $A_t$-smooth.
\end{lemma}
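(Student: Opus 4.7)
The idea is to recognize $B$ (respectively $\bar B$) as the quotient of $S_B\{P\}$ (respectively $S_{\bar B}\{\bar P\}$) by its augmentation ideal, transport this picture through $\varphi'$ (respectively $\psi'$) to $\widehat C$ (respectively $\widehat{\bar C}$), and algebraize the augmentation ideal by lifting its finite generating set from $\widehat C$ to $C$. The isomorphism-lifting machinery of Proposition~\ref{prop:isom_lift} will then promote the resulting mod-$\adic^n$ identification to genuine isomorphisms $B\cong\widehat D$ and $\bar B\cong\widehat{\bar D}$.

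I begin by fixing a finite generating set $p_1,\ldots,p_m$ of $P$ as a $B$-module; since $P\to\bar P$ is surjective, the images $\bar p_i$ generate $\bar P$ over $\bar B$. Through $\varphi'$ and $\psi'$, view $p_i\in\widehat C$ and $\bar p_i\in\widehat{\bar C}$; these generate the kernels of the augmentations $\widehat C\to B$ and $\widehat{\bar C}\to\bar B$ and are compatible under the surjection $\widehat C\to\widehat{\bar C}$. Choose $n$ large enough to satisfy the Artin--Rees, conductor, and torsion bounds of Proposition~\ref{prop:isom_lift} (such bounds exist because $B$ and $\bar B$ are formally finitely generated over $\widehat A$ and $P_t$, $\bar P_t$ are projective). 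By density of $C$ in $\widehat C$, pick $p_i^0\in C$ with $p_i^0\equiv p_i\pmod{\adic^n\widehat C}$, and let $\bar p_i^0$ be the image of $p_i^0$ in $\bar C$; then $\bar p_i^0\equiv\bar p_i\pmod{\adic^n\widehat{\bar C}}$. Set $D=C/(p_i^0)$ and $\bar D=\bar C/(\bar p_i^0)$; by construction, $D\to\bar D$ is a well-defined surjection of finitely generated $A$-algebras.

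Because $p_i^0\equiv p_i\pmod{\adic^n\widehat C}$, the ideals $(p_i^0)\widehat C+\adic^n\widehat C$ and $\ker(\widehat C\to B)+\adic^n\widehat C$ coincide, yielding a canonical isomorphism
$$
\widehat D/\adic^n\widehat D\cong B/\adic^n B,
$$
and compatibly for $\widehat{\bar D}/\adic^n\widehat{\bar D}\cong\bar B/\adic^n\bar B$. To apply Proposition~\ref{prop:isom_lift} over the complete base $\widehat A$, I need formal smoothness of $B$ and $\bar B$ over the complement of $V(\widehat\adic)$: this is automatic because $B$ is an $\widehat A$-algebra retract of $\widehat C=S_B\{P\}$ (via the structural inclusion $B\hookrightarrow S_B\{P\}$ and the augmentation), and $\widehat C$ is itself formally smooth over $\widehat A$ away from $V(\widehat\adic)$ by Lemma~\ref{lemma:H_Bness} applied to the $A$-smooth-away-from-$t$ algebra $C$; a direct verification shows that splittings of extensions of $\widehat C$ pull back via the retract to splittings of extensions of $B$. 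The remaining Artin--Rees hypotheses on kernels of presentations of $B$ and $\bar B$ are standard. Proposition~\ref{prop:isom_lift} then provides isomorphisms $\varphi\colon B\to\widehat D$ and $\psi\colon\bar B\to\widehat{\bar D}$ compatible with the surjections.

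It remains to ensure $D_t$ and $\bar D_t$ are $A_t$-smooth. Since $\widehat D\cong B$ is formally smooth over $\widehat A$ away from $V(\widehat\adic)$, Lemma~\ref{lemma:auto_smooth} produces $s\in\adic$ such that $D_{1+s}$ is smooth over the complement of $V(\adic)$; since localization at $1+s$ does not change the $\adic$-completion, replacing $D$ and $\bar D$ by such localizations (chosen compatibly so that the surjection $D\to\bar D$ persists) preserves all the isomorphisms while yielding the required smoothness. The main obstacle I anticipate is the careful tracking of the numerical constants $r,c,h,l,n$ needed for Proposition~\ref{prop:isom_lift} so that they apply uniformly to both the unbarred system $(B,\widehat D)$ and the barred system $(\bar B,\widehat{\bar D})$; these follow from Artin--Rees for finitely generated ideals in Noetherian rings and from the conductor estimates implied by formal smoothness away from $t$, but handling both systems simultaneously with a single choice of $n$ requires some bookkeeping.
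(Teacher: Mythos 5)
Your overall strategy --- approximate a finite set of generators $p_i$ of the augmentation ideal of $S_B\{P\}\cong\widehat C$ by elements $p_i^0\in C$ congruent to them modulo $\adic^n$, set $D=C/(p_i^0)$, and then invoke Proposition~\ref{prop:isom_lift} to upgrade the resulting congruence $\widehat D/\adic^n\widehat D\cong B/\adic^n B$ to an isomorphism $\widehat D\cong B$ --- has a genuine gap at the upgrading step. Two formally finitely generated algebras that agree modulo $\adic^n$ need not be isomorphic, and the proposition you want to use is not applicable as you have set things up. Its proof runs through Theorem~\ref{theorem:common_alg_lifting}, i.e.\ one must lift a homomorphism from one pair of algebras into the truncation tower of the other, and this lifting requires $\adic^h\subseteq \bar H$ (formal smoothness over the complement of $V(\adic)$, with the conductor $h$ entering the inequality on $n$) for the pair \emph{from which} one lifts. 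In the only orientation that avoids circularity --- lifting from $(\widehat D,\widehat{\bar D})$ into $(B,\bar B)$, so that the constants $r,c,l$ refer to the fixed pair $(B,\bar B)$ --- you would need $\adic^h\subseteq \bar H_{\widehat D/\widehat A}$ with $h$ independent of $n$, and nothing in your construction provides this: the quotient of $C$ by arbitrary elements $p_i^0$ merely congruent to the $p_i$ modulo $\adic^n$ need not be (formally) smooth away from $V(\adic)$ at all. In the opposite orientation the smoothness you do establish (your retract argument for $\bar H_{B/\widehat A}$ is fine) is the right one, but then the torsion, Artin--Rees and presentation constants required by Proposition~\ref{prop:isom_lift} are those of $(\widehat D,\widehat{\bar D})$, which depend on $n$, so ``choose $n$ large enough'' becomes circular; your closing remark about bookkeeping does not resolve this. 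This is precisely the difficulty that forces the elaborate choice of generators in Proposition~\ref{prop_good_case} (idempotent relations, Jacobian minors containing $t^h$, and solving the system $\Sigma$ in the henselization): crude coefficientwise approximation of a generating set does not preserve the certificates needed to compare completions.

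The paper's proof of this lemma sidesteps the problem entirely and never approximates generators of the augmentation ideal. It algebraizes the pair of \emph{modules} $M=S_B\{P\}\otimes_B P\to \bar M$ over (\'etale extensions of) $C\to\bar C$ via Corollary~\ref{cor:duo_mod_algeb}, identifies $\widehat{S_C(M_0)}$ with $\widehat{S_B\{P\}\otimes_B S_B\{P\}}$, algebraizes the multiplication map by a pair of algebraic homomorphisms $\Phi\colon S_C(M_0)\to C$, $\bar\Phi\colon S_{\bar C}(\bar M_0)\to\bar C$ using Corollary~\ref{cor:duo_alg_lift}, and then applies the elementary rigidity statement Lemma~\ref{lemma:approx_inverse} --- which compares two homomorphisms defined on the \emph{same} complete algebra and agreeing modulo $\adic$, a situation where rigidity is automatic, unlike the comparison of two different quotients agreeing modulo $\adic^n$. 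With $D=C/\Phi(M_0)$ the identification $\widehat D\cong B$ then holds by construction, with no appeal to Proposition~\ref{prop:isom_lift}. If you want to salvage your approach, you would have to build into the choice of the $p_i^0$ some algebraic certificate (as in Proposition~\ref{prop_good_case}) guaranteeing that $\widehat D$ is formally smooth away from $V(\adic)$ with conductor independent of $n$; as written, the step ``Proposition~\ref{prop:isom_lift} then provides isomorphisms'' is unsupported.
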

\begin{proof}
Let us note that, by Lemma~\ref{lemma:auto_smooth}, it is enough to
show the existence of such algebras $D$ and $\bar D$ without showing
the smoothness condition. Now, we denote $S_{B}\{P\}$ and $S_{\bar
B}(\bar P)$ by $R$ and $\bar R$, respectively, $R\otimes_{B}P$ and
$\bar R\otimes_{\bar B}\bar P$ by $M$ and $\bar M$, respectively.

From Corollary~\ref{cor:duo_mod_algeb}, it follows that we can
replace $C$ and $\bar C$ by some \'etale extensions such that, for
some finite $C$-module $M_0$, some finite $\bar C$-module $\bar
M_0$, and a surjective homomorphism $\phi_0\colon M_0\to \bar M_0$,
there are isomorphisms $\varphi\colon M\to \widehat{M}_0$ and
$\psi\colon \bar M\to \widehat{\bar M}_0$ such that the following
diagram is commutative
$$
\xymatrix{
    {M}\ar[r]\ar[d]^{\varphi}&{\bar M}\ar[r]\ar[d]^{\psi}&{0}\\
    {\widehat{M}_0}\ar[r]^{\widehat{\phi}_0}&{\widehat{\bar M}_0}\ar[r]&{0}\\
}
$$

The homomorphism $\phi_0$ induces the following surjective
homomorphism of rings
$$
S_{C}(M_0)\to S_{\bar C}(\bar M_0)\to 0
$$
We can write down the following sequence of isomorphisms for their
completions
$$
\xymatrix{
    {\widehat{S_{C}(M_0)}}\ar@/_35pt/[ddd]_{\nu}\ar[r]\ar[d]&{\widehat{S_{\bar C}(\bar M_0)}}\ar@/^35pt/[ddd]^{\bar \nu}\ar[r]\ar[d]&{0}\\
    {S_{\widehat{C}}\{\widehat{M}_0\}}\ar[r]\ar[d]_{S_{\widehat{C}}\{\varphi\}}&{S_{\widehat{\bar C}}\{\widehat{\bar M}_0\}}\ar[r]|-(0.40)\hole \ar[d]^{S_{ \widehat{\bar C} }\{ \psi \}}&{0}\\
    {S_{R}\{M\}}\ar[r]\ar[d]&{S_{\bar R}\{\bar M\}}\ar[r]|-(0.40)\hole\ar[d]&{0}\\
    {\widehat{R\otimes_{B}R}}\ar[r]&{\widehat{\bar R\otimes_{\bar B}\bar R}}\ar[r]&{0}\\
}
$$
where the vertical arrows are isomorphisms over $\widehat{A}$. Then
the product maps
$$
\mathrm{pr}\colon R\otimes_B R\to B \quad \mbox{and}
\quad\mathrm{pr}\colon \bar R\otimes_{\bar B}\bar R\to \bar R
$$
induce the corresponding homomorphisms on the completions and, thus,
give the following commutative diagram
$$
\xymatrix{
    {\widehat{R\otimes_BR}}\ar[r]\ar[d]^{\mathrm{pr}}&{\widehat{\bar R\otimes_{\bar B}\bar R}}\ar[r]\ar[d]^{\mathrm{pr}}&{0}\\
    {R}\ar[r]&{\bar R}\ar[r]&{0}\\
}
$$
So, we have the following commutative cube
$$
\xymatrix{
    &{S_{R}\{\widehat{M}_0\}}\ar[rr]\ar[ld]_{\Psi}&&{S_{\bar R}\{\widehat{\bar M}_0\}}\ar[rr]\ar[ld]_{\bar \Psi}&&{0}\\
    {R}\ar[rr]&&{\bar R}\ar[rr]&&{0}&\\
    &{S_{C}(M_0)}\ar[rr]|!{[r];[ru]}\hole\ar[uu]|!{[u];[ur]}\hole &&{S_{\bar C}(\bar M_0)}\ar[rr]\ar[uu]|!{[u];[ur]}\hole&&{0}\\
    {C}\ar[rr]\ar[uu]&&{\bar C}\ar[rr]\ar[uu]&&{0}&\\
}
$$
where $\Psi = \mathrm{pr}\circ \nu$ and $\bar \Psi =
\mathrm{pr}\circ \bar \nu$. By Corollary~\ref{cor:duo_alg_lift}, for
any natural number $n$, we can replace $C$ and $\bar C$ by some
\'etale extensions such that there exist homomorphisms $\Phi\colon
S_{C}(M_0)\to C$ and $\bar \Phi\colon S_{\bar C}(\bar M_0)\to \bar
C$ such that $\Phi$ and $\bar \Phi$ coincide with $\Psi$ and
$\bar\Psi$ modulo $\adic ^n$ and the square
$$
\xymatrix{
    {S_{C}(M_0)}\ar[r]\ar[d]^{\Phi}&{S_{\bar C}(\bar M_0)}\ar[r]\ar[d]^{\bar \Phi}&{0}\\
    {C}\ar[r]&{\bar C}\ar[r]&{0}\\
}
$$
is commutative. For our purpose, it is enough to take $n=1$. By
construction, the restrictions of $\Psi$ and $\bar \Psi$ to
$S_{B}\{\widehat{M}_0\}$ and $S_{\bar B}\{\widehat{\bar M}_0\}$,
respectively, induce isomorphisms onto $R$ and $\bar R$,
respectively. Since $\Phi$ and $\bar \Phi$ coincide with $\Psi$ and
$\bar \Psi$ modulo $\adic ^n$, by Lemma~\ref{lemma:approx_inverse},
the restrictions of $\widehat{\Phi}$ and $\widehat{\bar \Phi}$ to
the same subrings are also isomorphisms. Denoting this restrictions
by the same names, we get the following commutative diagram
$$
\xymatrix{
    &{S_{B}\{\widehat{M}_0\}}\ar[rr]\ar[dd]_(0.30){\pi}|!{[d];[dr]}\hole\ar[ld]_{\widehat{\Phi}}&&{S_{\bar B}\{\widehat{\bar M}_0\}}\ar[rr]\ar[dd]_(0.30){\bar \pi}|!{[d];[dr]}\hole\ar[ld]_{\widehat{\bar \Phi}}&&{0}\\
    {R}\ar[rr]\ar[dd]_(0.30){p}&&{\bar R}\ar[rr]\ar[dd]_(0.30){\bar p}&&{0}&\\
    &{B}\ar[rr]|!{[r];[rd]}\hole\ar[ld]_{\Phi'}&&{\bar B}\ar[rr]\ar[ld]_{\bar\Phi'}&&{0}\\
    {\widehat{C/\Phi(M_0)}}\ar[rr]&&{\widehat{\bar C/\bar \Phi(\bar M_0)}}\ar[rr]&&{0}&\\
}
$$
where $\pi$ and $\bar\pi$ are the quotient maps by the ideals
generated by $\widehat{M}_0$ and $\widehat{\bar M}_0$, respectively,
$p$ and $\bar p$ are the quotient maps by the ideals generated by
$\widehat{\Phi}(\widehat{M}_0)$ and $\widehat{\bar
\Phi}(\widehat{\bar M}_0)$, respectively, and $\Phi'$ and $\bar
\Phi'$ are the isomorphisms induced by $\widehat{\Phi}$ and
$\widehat{\bar \Phi}$, respectively. We set
$$
D = C/ \Phi(M_0) \quad \mbox{ and } \quad \bar D = \bar C/ \bar
\Phi(\bar M_0).
$$

Now, if $B$ and $\bar B$ are formally smooth over the complement of
$V(\widehat{\adic})$, then by Lemma~\ref{lemma:auto_smooth}, we can
suppose that their algebraizations $D$ and $\bar D$ are also smooth
over the complement of $V(\adic)$.

\end{proof}

\begin{theorem}\label{theorem:main}
Let $A$ be a ring, $\adic=\left(t\right)\subseteq A$ a principal
idea, and we are given a surjective homomorphism $B\to \bar B$ of
formally finitely generated $\widehat{A}$-algebras such that $B$ and
$\bar B$ are formally smooth over the complement of
$V\left(\widehat{\adic}\right)$. Then there exist a surjective
homomorphism $D\to \bar D$ of finitely generated $A$-algebras being
smooth over the complement of $V\left(\adic\right)$ and two
isomorphisms $\varphi\colon B\to \widehat D$ and $\psi\colon \bar
B\to \widehat{\bar D}$ such that the following diagram is
commutative
$$
\xymatrix{
    &{B}\ar[rr]\ar[dl]_{\varphi}&&{\bar B}\ar[r]\ar[dl]_{\psi}&0\\
    {\widehat{D}}\ar[rr]&&{\widehat{\bar D}}\ar[r]&0&\\
    {D}\ar[rr]\ar[u]&&{\bar D}\ar[r]\ar[u]&0&\\
}
$$

\end{theorem}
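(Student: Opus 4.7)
The plan is to combine Proposition~\ref{prop_good_case}, which handles the special case in which the conormal modules of $B$ and $\bar B$ are free after inverting $t$ with a controlled direct-sum decomposition, together with Lemma~\ref{lemma_reduction}, which reduces algebraization of a surjection $B\to \bar B$ of formally finitely generated $\widehat{A}$-algebras to the algebraization of a surjection $S_B\{P\}\to S_{\bar B}\{\bar P\}$ for an appropriate surjection $P\to \bar P$ of modules with $P_t$ and $\bar P_t$ projective. The general case is obtained by maneuvering into the hypotheses of Proposition~\ref{prop_good_case} by one or two applications of Lemma~\ref{lemma_reduction}.

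First I would fix a surjective presentation $\widehat{A}\{X\}\to B$ for a finite set $X$ and let $J\subseteq \bar J\subseteq \widehat{A}\{X\}$ denote the respective kernels for $B$ and $\bar B$. Since $B$ and $\bar B$ are formally smooth over the complement of $V(\widehat{\adic})$, Proposition~\ref{prop:H_Omega} combined with the second fundamental sequence (Proposition~\ref{prop:sec_fund_s}) implies that $(J/J^2)_t$ and $(\bar J/\bar J^2)_t$ are projective over $B_t$ and $\bar B_t$, respectively. I would then apply Lemma~\ref{lemma_reduction} in two stages, following the outline of the introduction. The first application, effectively the $B\{X\}\to \bar B$ reduction, enlarges the ambient algebra by an appropriate free module so that one can arrange a surjective $B$-module map between conormal modules. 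The second application, with $P=J/J^2$ and $\bar P=\bar J/\bar J^2$ for the enlarged presentation, replaces the problem by algebraization of a homomorphism of the form $S_B\{J/J^2\}\to S_{\bar B}\{\bar J/\bar J^2\}$; for this new homomorphism the second fundamental sequence splits, which forces both the freeness of the new $(J/J^2)_t$ and $(\bar J/\bar J^2)_t$ and the required decomposition $(\bar J/\bar J^2)_t=(J/J^2)_t\otimes_B\bar B_t\oplus D$ with $D$ free.

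Once these reductions are in place, Proposition~\ref{prop_good_case} applies and yields a surjection $D\to \bar D$ of finitely generated $A$-algebras with isomorphisms of completions onto the reduced pair; unwinding the reductions through Lemma~\ref{lemma_reduction} pulls this back to an algebraization of the original $B\to \bar B$, and the smoothness of $D$ and $\bar D$ over the complement of $V(\adic)$ then follows from Lemma~\ref{lemma:auto_smooth}. The main obstacle is the reduction itself: producing a surjective $B$-module homomorphism $J/J^2\to \bar J/\bar J^2$, which does not come from the canonical inclusion $J\hookrightarrow \bar J$, and checking that the iterated applications of Lemma~\ref{lemma_reduction} simultaneously preserve projectivity after inverting $t$ and produce the direct-sum decomposition required in Proposition~\ref{prop_good_case}. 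All other steps are essentially mechanical given the machinery already developed in the earlier sections.
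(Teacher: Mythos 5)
Your overall skeleton (reduce via Lemma~\ref{lemma_reduction} to the special case of Proposition~\ref{prop_good_case}, then get smoothness away from $V(\adic)$ via Lemma~\ref{lemma:auto_smooth}) is indeed the paper's strategy, but the step you yourself flag as ``the main obstacle'' is exactly where your plan breaks down, and the paper resolves it by sidestepping it rather than overcoming it. Lemma~\ref{lemma_reduction} requires a \emph{surjection} $P\to\bar P$, and you propose $P=J/J^2$, $\bar P=\bar J/\bar J^2$. No such surjection exists naturally, and the remedy you sketch --- the ``$B\{X\}\to\bar B$'' enlargement --- makes matters worse, not better: replacing $B$ by $B\{X\}$ turns the conormal modules into the completed base change of $J/J^2$ to $B\{X\}$ on the source side and $\bar J/\bar J^2\oplus\bar B^{|X|}$ on the target side, so after tensoring down to $\bar B$ the source is still only $J/J^2\otimes_B\bar B$ while the target has gained an extra free summand; comparing ranks after inverting $t$ shows that a surjection is in general impossible. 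The actual proof never produces a map $J/J^2\to\bar J/\bar J^2$ at all: it applies Lemma~\ref{lemma_reduction} twice, first with $(P,\bar P)=(B^m,\bar J/\bar J^2)$ (a free module always surjects onto the finite module $\bar J/\bar J^2$, and this replacement makes $\Omega^s$ of the new $\bar B$ free after inverting $t$ by the split second fundamental sequence), and then with $(P,\bar P)=(J/J^2,0)$ (which frees up $\Omega^s$ of the new $B$ while leaving $\bar B$ unchanged). The parenthetical in the introduction is only a loose description of these two moves.

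The second gap is your claim that the splitting of the second fundamental sequence ``forces the freeness'' of the new $(J/J^2)_t$, $(\bar J/\bar J^2)_t$ and of the complement $D$: splitting gives only projectivity over $B_t$ and $\bar B_t$, and Proposition~\ref{prop_good_case} genuinely needs freeness. To get it the paper performs stabilization steps that are absent from your plan: after the two reductions it changes the presentations to $B=B\{t_1,\ldots,t_s\}/(t_1,\ldots,t_s)$ and $\bar B=\bar B\{t_1,\ldots,t_s\}/(t_1,\ldots,t_s)$ with $s\geqslant\max(d,\bar d)$ and reruns Elkik's Lemma~3 argument with $\Omega^s$ to turn the (stably free) conormal modules into free ones, and then makes one further replacement $B\{t_1,\ldots,t_s\}\to B\to\bar B$ with $s$ larger than the rank of $(\Omega^s_{\bar B/\widehat{A}})_t$ so that the stably free complement $D$ in $(\bar J/\bar J^2)_t=(J/J^2)\otimes_B\bar B_t\oplus D$ becomes free. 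Without constructing the module surjections actually used and without these stabilization steps, the hypotheses of Proposition~\ref{prop_good_case} are not met, so as written the proposal has a genuine gap precisely at its central reduction.
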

\begin{proof}

Using Lemma~\ref{lemma_reduction}, we will reduce this theorem to
Proposition~\ref{prop_good_case}.

Firstly, we present algebras $B$ and $\bar B$ as quotients of a ring
of the form $\widehat{A}\{x\}$, where $x=\{x_1,\ldots,x_n\}$ is a
finite set of indeterminates, as follows
$$
\xymatrix@R=10pt@C=10pt{
    0\ar[r]&{J}\ar[r]\ar[d]&{\widehat{A}\{x\}}\ar[r]\ar@{=}[d]&{B}\ar[r]\ar[d]&0\\
    0\ar[r]&{{\bar J}}\ar[r]&{\widehat{A}\{x\}}\ar[r]&{\bar B}\ar[r] & 0\\
}
$$
Considering ${\bar J}/{\bar J}^2$ as a $B$ module, we find an
epimorphism $B^m\to {\bar J}/{\bar J}^2$. By
Lemma~\ref{lemma_reduction}, we can replace the homomorphism
$B\to\bar B\to 0$ by the homomorphism $S_{B}\{B^m\}\to S_{\bar
B}\{{\bar J}/{\bar J}^2\}\to 0$. So, we may assume that
$\Omega^s_{\bar B/\widehat{A}}$ is free over the complement of
$V\left(\widehat{\adic} \right)$ of some rank $\bar d$.

Again by Lemma~\ref{lemma_reduction}, we can replace $B\to\bar B\to
0$ by the composition $S_{B}\{J/J^2\}\to B\to\bar B$ and assume that
$\Omega^s_{B/\widehat{A}}$ is free over the complement of
$V\left(\widehat{\adic} \right)$ of some rank $d$.

If we present our algebras in the form
$$
B=B\{t_1,\ldots,t_s\}/\left(t_1,\ldots,t_s\right)\quad \mbox{ and
}\quad\bar B=\bar B\{t_1,\ldots,t_s\}/\left(t_1,\ldots,t_s\right),
$$
where $s\geqslant \max\left(d,\bar d\right)$. Then, repeating the
proof of Lemma~3 of~\cite{Elkik73} with $\Omega^s$ instead of
$\Omega$, we can assume that the modules $(J/J^2)_t$ and $({\bar
J}/{\bar J}^2)_t$ are free over $B_t$ and $\bar B_t$, respectively.

Now, we write down the second fundamental sequence for both algebras
with respect to the chosen representations and restrict it to the
complement of $V\left(\widehat{\adic} \right)$.
$$
\xymatrix{
    0\ar[r]&{\left(J/J^2\right)_t}\ar[d]\ar[r]&{\Omega^s_{\widehat{A}\{z\}/\widehat{A}}\otimes_{\widehat{A}\{z\}} B_t}\ar[d]\ar[r]&{\left(\Omega^s_{B/\widehat{A}}\right)_t}\ar[d]\ar[r]&0\\
    0\ar[r]&{\left(J/J^2\right)\otimes_B\bar B_t}\ar@{^(->}[d]\ar[r]&{\Omega^s_{\widehat{A}\{z\}/\widehat{A}}\otimes_{\widehat{A}\{z\}} \bar B_t}\ar@{=}[d]\ar[r]&{\Omega^s_{B/\widehat{A}}\otimes_B \bar B_t}\ar[d]\ar[r]&0\\
    0\ar[r]&{\left({\bar J}/{\bar J}^2\right)_t}\ar[r]&{\Omega^s_{\widehat{A}\{z\}/\widehat{A}}\otimes_{\widehat{A}\{z\}} \bar B_t}\ar[r]&{\left(\Omega^s_{\bar B/\widehat{A}}\right)_t}\ar[r]&0\\
}
$$
Here $z=\{z_1,\ldots,z_n\}$ is the set of indeterminates such that
$J/J^2$ and $\bar J/{\bar J}^2$ are free over the complement of
$V\left(\widehat{\adic}\right)$. All these sequences are split
exact. In particular,
$$
\left({\bar J}/{\bar J}^2\right)_t = \left(J/J^2\right)\otimes_B
\bar B_t \oplus D
$$
If we replace $B\to \bar B\to 0$ by the composition $B\{t_1,\ldots,
t_s\}\to B\to \bar B$, where $s$ is greater than the rank of
$(\Omega^s_{\bar B/\widehat{A}})_t$, we may assume that $D$ is free
over the complement of $V\left(\widehat{\adic} \right)$. This case
is done by Proposition~\ref{prop_good_case}.

\end{proof}

\subsection{Algebraization of a divisor}

\begin{lemma}\label{lemma:invert}
Let $A$ be a ring, $\adic\subseteq A$ an ideal, $B$ a finitely
generated $A$-algebra, and $I$ is an ideal of $B$. Assume that the
ideal $\widehat{I}$ is invertible in $\widehat{B}$, where the hat
means the $\adic$-adic completion. Then there exists an element
$s\in \adic$ such that the ideal $I_{1+s}$ is invertible in
$B_{1+s}$.
\end{lemma}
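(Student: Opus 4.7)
The approach is to use that, for a finitely generated ideal in a Noetherian ring, being invertible is equivalent to being locally free of rank one as a module, and the latter is an open condition on the spectrum. The plan is to identify the closed non-invertibility locus $Z\subseteq \spec B$ of $I$, show by faithfully flat descent that $Z$ is disjoint from $V(\adic)$, and then extract an element of the form $1+s$ (with $s$ coming from $\adic$) lying in the defining ideal of $Z$; localizing at this $1+s$ will kill $Z$ and yield the conclusion.

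First I would set up the non-invertibility locus. Since $B$ is Noetherian and $I$ is finitely generated, $I$ is finitely presented, so the set
$$
U = \{\mathfrak{p}\in \spec B : I_\mathfrak{p}\cong B_\mathfrak{p}\}
$$
is open in $\spec B$. Let $\mathfrak{c}\subseteq B$ be an ideal with $Z := \spec B\setminus U = V(\mathfrak{c})$.

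The key step is to prove $\mathfrak{c}+\adic B = B$ by faithfully flat descent. Let $\mathfrak{p}\subseteq B$ be a prime with $\mathfrak{p}\supseteq \adic$. The identification $B/\adic \cong \widehat{B}/\adic\widehat{B}$ shows that $\mathfrak{p}$ lifts to a prime $\mathfrak{q}\subseteq \widehat{B}$ with $\mathfrak{q}\cap B = \mathfrak{p}$. The induced local homomorphism $B_\mathfrak{p}\to \widehat{B}_\mathfrak{q}$ is flat and local, hence faithfully flat, so $I_\mathfrak{p}$ is free of rank one over $B_\mathfrak{p}$ if and only if $\widehat{I}_\mathfrak{q}$ is free of rank one over $\widehat{B}_\mathfrak{q}$. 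By hypothesis $\widehat{I}$ is invertible, so the latter holds, and thus $\mathfrak{p}\in U$. Therefore $V(\mathfrak{c})\cap V(\adic) = \emptyset$, which is equivalent to $\mathfrak{c}+\adic B = B$.

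To finish I would write $1 = c + y$ with $c\in \mathfrak{c}$ and $y\in \adic B$, and set $s = -y$ so that $1+s = c\in \mathfrak{c}$. (As in the proof of Lemma~\ref{lemma:auto_smooth}, ``$s\in \adic$'' is understood in the sense of the image of $\adic$ in $B$.) In $B_{1+s}$ the element $1+s$ becomes a unit and lies in $\mathfrak{c}$, hence $\mathfrak{c}B_{1+s} = B_{1+s}$, so $\spec B_{1+s}\subseteq U$; this gives that $I_{1+s}$ is locally free of rank one, i.e.\ invertible, in $B_{1+s}$. The only substantive point in the argument is the faithfully flat descent in the middle paragraph; the remainder is essentially topological bookkeeping.
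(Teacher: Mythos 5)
Your argument is correct, but it takes a genuinely different route from the paper. The paper splits invertibility via the Bourbaki criterion into ``projective $+$ contains a non-zero divisor,'' first descends both properties to the Zariskification $B_{1+\adic B}$ along the faithfully flat map $B_{1+\adic B}\to\widehat{B}$ (the non-zero-divisor part by an explicit associated-primes argument), and then spreads each property out to a single localization $B_{1+s}$: the non-zero divisor via finite generation of its annihilator, and projectivity via the $H_P$-machinery (Corollary~\ref{cor:H_P_K} together with Proposition~\ref{lemma:H_P_proj}). You instead work pointwise on $\spec B$: you use openness of the locus $U$ where $I$ is locally free of rank one, check that every prime $\mathfrak p\supseteq\adic B$ lies in $U$ by faithfully flat descent along $B_{\mathfrak p}\to\widehat{B}_{\mathfrak q}$ (using $B/\adic B\cong\widehat B/\adic\widehat B$ and $\widehat I\cong I\otimes_B\widehat B$), and then turn the disjointness $V(\mathfrak c)\cap V(\adic B)=\emptyset$ into an element $1+s\in\mathfrak c$. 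What your route buys is independence from the $H_P$ formalism and a cleaner, more geometric spreading-out step; what it costs is that the whole non-zero-divisor issue is hidden inside the asserted equivalence ``invertible $\Leftrightarrow$ locally free of rank one at all primes,'' whose nontrivial half is exactly the point the paper proves by hand: in a Noetherian ring, an ideal that is locally free of rank one at every prime cannot consist of zero divisors (it would lie in an associated prime $\annihil_B(x)$, and then its localization there would kill $x$, contradicting freeness of rank one), hence is invertible. Since you state the equivalence with the Noetherian and finite-generation hypotheses in place, this is a standard fact rather than a gap, though spelling out that one implication would make the write-up self-contained. Your reading of ``$s\in\adic$'' as $s\in\adic B$ matches the paper's own proof and its use in Theorem~\ref{theorem:alg_div}, so that is not a discrepancy.
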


\begin{proof}
Let us denote the algebra $B_{1+\adic   B}$ by $B'$ and $I_{1+\adic
B}$ by $I'$. Then we have the following sequence of homomorphisms
$B\to B'\to \widehat{B}$. By Theorem~4 of~\cite[Chapter~II,
Section~6]{BourCA}, the ideal $I$ is invertible if and only if $I$
is projective and contains a non-zero divisor.

1) We will prove that $I'$ is invertible. Firstly, we will show that
$I'$ contains a non-zero divisor. Since $\widehat{I}=I'\widehat{B}$,
it is enough to show that, if $I'$ consists of zero divisors only,
then $I'\widehat{B}$ also consists of zero divisors. In this case,
$I'$ belongs to some associated prime $\mathfrak p\subseteq B'$ and
$\mathfrak p =\annihil_{B'}(x)$ for some $x\in B'$. Since
$\widehat{B}$ is flat, we have
$$
\mathfrak p\widehat{B} = \annihil_{B'}(x)\otimes_{B'}\widehat{B}=
\annihil_{\widehat{B}}(x).
$$
In particular, $\mathfrak p\widehat{B}$ consists of zero divisors.

Secondly, we will show that $I'$ is projective. Indeed, the ideal
$\widehat{I}$ is projective, thus, flat. Since $\adic B'$ belongs to
the radical of $B'$, $\widehat{B}$ is faithfully flat. But
$$
\widehat{I} = I'\widehat{B} = I'\otimes_{B'}\widehat{B}.
$$
Hence, $I'$ is flat. Since $I'$ is finitely generated and $B'$ is
Noetherian, $I'$ is projective.

2) Now, we will show that there is some $s\in \adic B$ such that
$I_{1+s}$ is invertible in $B_{1+s}$. Firstly, we will show that
$I_{1+s}$ contains a non-zero divisor for some $s$. If $a'\in I'$ is
a non-zero divisor in $B'$, then, there are an $s_1\in \adic B$ and
$a\in I$ such that $(1+s_1)a'=a$. The annihilator of $a$ in $B$ is
finitely generated and is equal to zero in $B'$. Therefore, there is
an element $s_2\in \adic B$ such that $a$ is not a zero divisor in
$B_{1+s_2}$. Thus element $a\in I_{(1+s_1)(1+s_2)}$ is the required
non-zero divisor.

Secondly, we will show that, for some $s\in \adic B$, the ideal
$I_{1+s}$ is projective $B_{1+s}$-module. By
Lemma~\ref{lemma:H_P_proj}, it is enough to show that
$(B/H_{I})_{1+s}=0$ for some $s\in \adic B$. We can represent $I$ as
follows
$$
0\to K\to F\to I \to 0
$$
where $F$ is a free finitely generated $A$-module. Then by
Corollary~\ref{cor:H_P_K}, we have
$$
H_I = \annihil_B(\ext^1_B(I,K))
$$
But $I'$ is projective $B'$-module, hence $H_{I'}=B'$. Since our
modules are finitely generated over Noetherian ring, localization
commutes with annihilators and Ext. Thus, $(B/H_I)_{1+\adic B}=0$.
Hence, there is an element $s_3\in \adic B$ such that
$(B/H_I)_{1+s_3}=0$. Now, we should localize by the product
$$
(1+s_1)(1+s_2)(1+s_3).
$$
\end{proof}

\begin{theorem}\label{theorem:alg_div}
Let $A$ be a ring, $\adic=(t)\subseteq A$ a principal ideal, $\bar
B$ is a formally finitely generated $\widehat{A}$-algebra being
formally smooth over the complement of $V(\widehat{\adic})$. Assume
that we are given an invertible ideal $\bar I\subseteq \bar B$ such
that the quotient $\bar B/\bar I$ is formally smooth over the
complement of $V(\widehat{\adic})$. Then there exist a finitely
generated $A$-algebra $B$ and an invertible ideal $I\subseteq B$
such that $\widehat{B}=\bar B$ and $I\widehat{B}=\bar I$. Moreover,
algebras $B$ and $B/I$ are smooth over the complement of $V(\adic)$.
\end{theorem}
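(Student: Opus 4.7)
The plan is to apply Theorem~\ref{theorem:main} to the quotient map $\bar B \to \bar B/\bar I$ and then use Lemma~\ref{lemma:invert} to arrange invertibility of the kernel after a harmless localization.

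First, both $\bar B$ and $\bar B/\bar I$ are formally finitely generated $\widehat{A}$-algebras that are formally smooth over the complement of $V(\widehat{\adic})$ by hypothesis, so Theorem~\ref{theorem:main} applies to the surjection $\bar B \to \bar B/\bar I$. It produces a surjective homomorphism $D_0 \to \bar D_0$ of finitely generated $A$-algebras, both smooth over the complement of $V(\adic)$, together with isomorphisms $\varphi\colon \bar B \to \widehat{D_0}$ and $\psi\colon \bar B/\bar I \to \widehat{\bar D_0}$ compatible with the quotient maps. Setting $I_0 = \ker(D_0 \to \bar D_0)$, flatness of $D_0 \to \widehat{D_0}$ gives $I_0 \widehat{D_0} = \widehat{I_0} = \ker(\widehat{D_0} \to \widehat{\bar D_0})$, which is identified with $\bar I$ via $\varphi$.

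Second, since $I_0 \widehat{D_0} = \bar I$ is invertible in $\widehat{D_0}$, Lemma~\ref{lemma:invert} supplies an element $s \in \adic$ such that $(I_0)_{1+s}$ is invertible in $(D_0)_{1+s}$. I then take $B = (D_0)_{1+s}$ and $I = (I_0)_{1+s}$. The ring $B = D_0[y]/(y(1+s)-1)$ is still a finitely generated $A$-algebra, and $I \subseteq B$ is now an invertible ideal by construction, so the candidate pair has the required shape.

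Third, I verify that this localization preserves the remaining properties. Because $s \in \adic$, its image in every quotient $D_0/\adic^n D_0$ is nilpotent, whence $1+s$ is a unit there; therefore the canonical map $D_0/\adic^n D_0 \to (D_0)_{1+s}/\adic^n (D_0)_{1+s}$ is an isomorphism for every $n$. Taking the inverse limit yields $\widehat{B} \cong \widehat{D_0} \cong \bar B$, and this identification sends $I\widehat{B}$ onto $\bar I$. Finally, localization preserves smoothness, so $B$ and $B/I = (\bar D_0)_{1+s}$ remain smooth over the complement of $V(\adic)$. The only substantive step is Theorem~\ref{theorem:main} itself; once that is available, the corollary reduces to formal bookkeeping via Lemma~\ref{lemma:invert}, and no further obstacle arises.
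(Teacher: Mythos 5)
Your proposal is correct and follows essentially the same route as the paper: apply Theorem~\ref{theorem:main} to $\bar B\to\bar B/\bar I$, take the kernel of the algebraized surjection, and invoke Lemma~\ref{lemma:invert} to make it invertible after localizing at $1+s$ with $s\in\adic$. Your extra verifications (that $I_0\widehat{D_0}=\bar I$ by flatness of completion, and that localizing at $1+s$ leaves all quotients $D_0/\adic^n D_0$, hence the completion, unchanged) are exactly the bookkeeping the paper leaves implicit in the phrase ``satisfy the required conditions.''
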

\begin{proof}

Applying Theorem~\ref{theorem:main} to the homomorphism $\bar B\to
\bar B/\bar I$, we get the following commutative diagram
$$
\xymatrix{
    &{\bar B}\ar[rr]^{\pi}\ar[dl]_{\varphi}&&{\bar B/\bar I}\ar[r]\ar[dl]_{\psi}&0\\
    {\widehat{D}}\ar[rr]^{\widehat{\phi}}&&{\widehat{ D'}}\ar[r]&0&\\
    {D}\ar[rr]^{\phi}\ar[u]&&{D'}\ar[r]\ar[u]&0&\\
}
$$
where $\pi$ is the quotient map, $D$ and $D'$ are finitely generated
$A$-algebras being smooth over the complement of $V(\adic)$, and the
maps $\varphi$ and $\psi$ are $\widehat{A}$-isomorphisms. Now, we
define $J=\ker \phi$. By Lemma~\ref{lemma:invert}, there is an
element $s\in \adic D$ such that $J_{1+s}$ is invertible in
$D_{1+s}$. Now, the algebra $B=D_{1+s}$ and the ideal $I=J_{1+s}$
satisfy the required conditions.
\end{proof}

\bibliographystyle{plain}
\bibliography{bibl}

\end{document}